\documentclass{article}
\usepackage{graphicx} % Required for inserting images
\usepackage{amsmath}
\usepackage{amsfonts}
\usepackage{amssymb}
\usepackage[english]{babel}
\usepackage{amsthm}
\usepackage{tikz}
\usetikzlibrary{decorations.pathreplacing,calc}
\usepackage{thm-restate}
\usepackage[margin=1.0in]{geometry}

\newtheorem{proposition}{Proposition}
\newtheorem{definition}{Definition}

\newtheorem{lemma}{Lemma}
\newtheorem{corollary}{Corollary}

\newtheorem{conjecture}{Conjecture}

\title{\textbf{Minimizing Monochromatic Subgraphs of $K_{n,n}$}}
\author{Charles Gong}
\date{}
\begin{document}

\maketitle

\begin{abstract}
Given any $r$-edge coloring of $K_{n,n}$, how large is the maximum (over all $r$ colors) sized monochromatic subgraph guaranteed to be? We give answers to this problem for $r \leq 8$, when $r$ is a perfect square, and when $r$ is one less than a perfect square all up to a constant additive term that depends on $r$. We give a lower bound on this quantity that holds for all $r$ and is sharp when $r$ is a perfect square up to a constant additive term that depends on $r$. Finally, we give a construction for all $r$ which provides an upper bound on this quantity up to a constant additive term that depends on $r$, and which we conjecture is also a lower bound. 
\end{abstract}

\begin{center}
\section{Introduction}
\end{center}

Let $K_{n,n}$ denote the complete bipartite graph with $n$ vertices in each of the partite sets, and let $A_n = \{u_i : i \in [n]\} \subseteq V(K_{n,n})$ denote the vertex set of one partite set of $K_{n,n}$, and $B_n = \{v_i : i \in [n]\} \subseteq V(K_{n,n})$ denote the vertex set of the other partite set of $K_{n,n}$. An $r$-edge coloring of the complete bipartite graph $K_{n,n}$ is a function  $f : E(K_{n,n}) \rightarrow [r]$, where $E(K_{n,n})$ is the set of edges of $K_{n,n}$. Let $V(K_{n,n})$ denote the vertex set of $K_{n,n}$, and $[r]^{E(K_{n,n})}$ denote the set of functions from $E(K_{n,n})$ to $[r]$.  Given a coloring $f :  E(K_{n,n}) \rightarrow [r]$, we'll say that a color $i \in [r]$ \textbf{touches} a vertex $v \in V(K_{n,n})$ and a vertex $v$ \textbf{touches} a color $i$ if there exists an edge $e \in E(K_{n,n})$ such that $v \in e$ and $f(e) = i$; i.e. $v$ is incident to an edge $e$ of color $i$. Although touching is defined with respect to a specific coloring $f$, when we use the term ``touch" we will omit what coloring $f$ it refers to, as it will be clear from context. Let $\mathbb{Z}^{+}$ denote the set of positive integers. \\

\begin{definition}
Let
$$g(n,r) := \min_{f \in [r]^{E(K_{n,n})}} \ \max_{i \in [r]} |\{v \in V(K_{n,n}) : i \text{ touches } v \}|$$

\noindent
for $n,r \in \mathbb{Z}^{+}$.
\end{definition}

\noindent
So the goal is to minimize the number of vertices any color touches in an $r$-edge coloring of $K_{n,n}$. \\ 

Alon, Bucić, Christoph, and Krivelevich \cite{alon} studied an analogous problem for complete graphs $K_n$, as opposed to complete bipartite graphs $K_{n,n}$. In particular, they studied the quantity $g(n,r,s) : (\mathbb{Z}^{+})^3 \rightarrow \mathbb{Z}^{+}$ where

$$g(n,r,s) := \min_{f \in [r]^{E(K_{n})}} \ \max_{S \subseteq [r], \ |S| = s} |\{v \in V(K_{n}) : \exists i \in S \text{ such that $i$ touches $v$}\}|.$$

In other words, instead of considering subgraphs of only one color, they allowed subgraphs to contain $s$ colors. They did this in order to provide upper bounds on the quantity $f(n,r,s) : (\mathbb{Z}^{+})^3 \rightarrow \mathbb{Z}^{+}$, where

 $$ f(n,r,s) = \min_{f \in [r]^{E(K_n)}} \max_{S \subseteq [r], \ |S| = s} \text{size of the largest connected component using colors in $S$},$$ 

\noindent
 which was studied by Liu, Morris, and Prince \cite{liu}. Note that the analogous problem for $g(n,r)$ which deals with monochromatic \textit{connected} subgraphs in $K_{n,m}$ has already been solved by Liu, Morris, and Prince \cite{liu}: in an $r$-edge coloring of $K_{n,m}$, there exists a monochromatic connected component of size at least $\frac{m+n}{r}$, and this lower bound is sharp when $r \mid m$ and $r \mid n$. Somewhat surprisingly, the answer is less simple when one drops the condition that the monochromatic subgraph needs to be connected, and we will be studying this problem with the dropped condition in this paper.  \\

We now describe the results of this paper. In Section 3 we describe an algorithm that for any $n,r \in \mathbb{Z}^{+}$ can compute $g(n,r)$ up to a constant factor depending only on $r$. The algorithm takes in a positive integer $r \in \mathbb{Z}^{+}$ and outputs a rational number in $[0,2]$, and we denote it by $g^* : \mathbb{Z}^{+} \rightarrow [0,2]$. \\

\begin{restatable}{theorem}{computelower}
We have $g(n,r) \geq g^*(r) \cdot n$ for all $n,r \in \mathbb{Z}^{+}$.
\end{restatable}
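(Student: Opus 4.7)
The plan is to reduce the combinatorial problem of bounding $g(n,r)$ from below to a continuous optimization in $2r$ real variables, and then to identify $g^*$ as (a procedure computing a lower bound on) the value of that optimization. Fix any $r$-edge coloring $f : E(K_{n,n}) \to [r]$. For each color $i$, let $a_i$ be the number of vertices in $A_n$ that are touched by $i$, and $b_i$ the analogous count for $B_n$; the quantity we need to bound from below is exactly $\max_i (a_i + b_i)$. The starting observation is a double count: every edge of color $i$ has both endpoints touched by $i$, so the edges of color $i$ form a subgraph of the complete bipartite graph on these touched vertices, giving at most $a_i b_i$ edges of color $i$. Since every one of the $n^2$ edges receives exactly one color,
\[
\sum_{i=1}^{r} a_i b_i \;\geq\; n^2.
\]

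Next I would rescale by $n$ via $x_i = a_i/n$ and $y_i = b_i/n$. The problem becomes: among all $(x_i, y_i)_{i=1}^{r} \in [0,1]^{2r}$ with $\sum_i x_i y_i \geq 1$, give a lower bound on $\max_i (x_i + y_i)$. A quick AM-GM computation already yields $\max_i (x_i + y_i) \geq 2/\sqrt{r}$, which matches the sharpness for perfect-square $r$ mentioned in the abstract, and one expects $g^*(r)$ to refine this when $r$ is not a perfect square. I would therefore define $g^*(r)$ as the infimum value of this continuous optimization (or as a computable lower bound on it); given such a definition, the theorem follows immediately by multiplying through by $n$, since the discrete configuration $(a_i/n, b_i/n)$ is feasible for the continuous program.

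The main obstacle is to argue that the quantity output by the algorithm really is a valid lower bound on the continuous program, and that the program itself is solvable in finitely many steps. The cleanest route I see is a structural reduction: show that an extremal configuration uses only a small number of distinct pairs $(x_i, y_i)$, for instance by a smoothing or merging argument that combines two colors with similar profile without decreasing $\sum_i x_i y_i$ and without increasing $\max_i(x_i+y_i)$. Such a reduction collapses the continuous optimization to a finite case analysis parametrized by the multiplicities and approximate values of the distinct pairs, which is exactly the sort of enumeration that an algorithm like $g^*$ can perform. Once that structural lemma is in place, the inequality $g(n,r) \geq g^*(r) \cdot n$ drops out of the scaling step above.
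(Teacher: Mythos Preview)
Your relaxation is too coarse to recover the paper's $g^*(r)$. The paper does not define $g^*(r)$ as the infimum of $\max_i(x_i+y_i)$ over vectors with $\sum_i x_iy_i\ge 1$; instead it keeps the finer data $a(R),b(R)$ for every $R\subseteq[r]$ (the proportion of columns, respectively rows, whose color set is exactly $R$), together with the combinatorial constraint~(5) that $a(R_1)>0$ and $b(R_2)>0$ force $R_1\cap R_2\neq\emptyset$. The bilinear inequality $\sum_i a_ib_i\ge 1$ you use is only a \emph{consequence} of those constraints (it appears later as Lemma~1), not their content. With the paper's actual definition, Theorem~1 is essentially a one-line observation: any coloring of $K_{n,n}$ yields values $a(R),b(R)$ satisfying all of the coloring constraints, so setting $m=\max_i(a_i+b_i)$ gives a feasible point of the coloring problem and hence $m\ge g^*(r)$ by minimality.

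Your argument would prove only $g(n,r)\ge n\cdot\inf\{\max_i(x_i+y_i):x_i,y_i\in[0,1],\ \sum_i x_iy_i\ge 1\}$, and that infimum equals $2/\sqrt{r}$ (take all $x_i=y_i=1/\sqrt{r}$ and apply AM--GM for the matching lower bound). This is strictly smaller than $g^*(r)$ whenever $r$ is not a perfect square: already at $r=2$ your program has value $\sqrt{2}$, while the paper shows $g^*(2)=3/2$. The extra strength of $g^*$ comes precisely from constraint~(5), which your bilinear relaxation discards, so the smoothing/merging reduction you sketch is aimed at the wrong optimization problem: no analysis of the region $\sum_i x_iy_i\ge 1$ alone can output $g^*(r)$, because its feasible set is genuinely larger than the one defining $g^*$.
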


\begin{restatable}{theorem}{compute}
For all $r \in \mathbb{Z}^{+}$, there exists $N \in \mathbb{Z}^{+}$ such that for all $t \in \mathbb{Z}^{+}$ we have $g(tN,r) = g^*(r) \cdot tN$.
\end{restatable}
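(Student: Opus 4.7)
The plan is to prove the matching upper bound to Theorem 1. Since that theorem gives $g(n,r) \geq g^*(r) \cdot n$ for all $n$, it suffices to construct, for each $r$ and for all $n$ of the form $tN$ with $N = N(r)$, an $r$-edge coloring of $K_{n,n}$ in which every color touches at most $g^*(r) \cdot n$ vertices; this immediately forces $g(tN,r) \leq g^*(r) \cdot tN$, matching the lower bound.

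First, I would unwind what $g^*(r)$ computes. An $r$-edge coloring of $K_{n,n}$ corresponds to a partition of $E(K_{n,n})$ into $r$ bipartite subgraphs; the $i$-th color class lies in a complete bipartite subgraph spanning $a_i$ vertices of $A_n$ and $b_i$ vertices of $B_n$, and touches exactly $a_i + b_i$ vertices. So $g(n,r)$ is the minimum over all such decompositions of $\max_i (a_i + b_i)$. The algorithm from Section 3 producing $g^*(r)$ should be, up to normalization, solving the natural fractional relaxation of this problem, outputting a finite list of biclique shapes $(\alpha_k, \beta_k) \in \mathbb{Q}_{\geq 0}^2$ together with rational multiplicities $w_k \in \mathbb{Q}_{\geq 0}$ satisfying $\sum_k w_k = r$, $\sum_k w_k \alpha_k \beta_k \geq 1$, and $\max_k (\alpha_k + \beta_k) = g^*(r)$.

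Second, I would let $N$ be a common denominator of all the rationals $\alpha_k, \beta_k, w_k$ that appear in this optimal fractional solution. Then for $n = tN$, the rectangle dimensions $\alpha_k n$ and $\beta_k n$ and the multiplicities $w_k$ are all positive integers. The construction step is to exhibit an actual edge partition of $K_{n,n}$ using, for each $k$, exactly $w_k$ bicliques of shape $\alpha_k n \times \beta_k n$. The natural approach is to split $A_n$ into blocks of sizes matching the various $\alpha_k n$, split $B_n$ similarly, and then assign the bicliques to pairs of blocks using a product-type or Latin-square-type combinatorial design so that every edge of $K_{n,n}$ lies in exactly one biclique and every color's vertex count is exactly $\alpha_k n + \beta_k n \leq g^*(r) \cdot n$.

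The main obstacle is this final combinatorial design step: even granting a valid fractional solution, translating it into an exact edge partition requires lining up the blocks of $A_n$ and $B_n$ so that every edge is covered exactly once and no color exceeds its vertex budget. This is precisely where the divisibility assumption $N \mid n$ is essential, since it provides enough flexibility in block sizes for the tiles to fit without remainder. I expect that once the internal structure of $g^*$ is exposed in Section 3, its output will come equipped with a natural rectangular tiling interpretation of $K_{n,n}$, and that tiling can be realized exactly whenever all the relevant dimensions are integral, yielding the upper bound.
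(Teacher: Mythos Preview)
Your high-level plan---take a rational optimal solution to the relaxation, let $N$ be a common denominator, and realize the solution integrally on $K_{tN,tN}$---is exactly the paper's approach. However, your guess at what the relaxation actually is is wrong, and this matters.

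You propose that $g^*(r)$ minimizes $\max_k(\alpha_k+\beta_k)$ subject to $\sum_k w_k = r$ and $\sum_k w_k\alpha_k\beta_k \geq 1$. That program has optimal value $2/\sqrt{r}$ (take all $\alpha_k=\beta_k=1/\sqrt r$), which is only the lower bound of Theorem~4, strictly below $g^*(r)$ for non-square $r$. So with your formulation the value you would be trying to match is not $g^*(r)$, and---more to the point---a solution to your LP does not in general round to a coloring, which is precisely the ``combinatorial design'' obstacle you flag.

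The paper's actual relaxation has variables $a(R),b(R)$ for every subset $R\subseteq[r]$, recording the proportion of columns (resp.\ rows) whose set of colors is exactly $R$, together with the structural constraint~(5): if $R_1\cap R_2=\emptyset$ then $a(R_1)=0$ or $b(R_2)=0$. This richer parametrization is what makes the rounding step trivial. One partitions $A_{tN}$ into blocks $A_R$ of size $a(R)\cdot tN$ and $B_{tN}$ into blocks $B_R$ of size $b(R)\cdot tN$; for $u\in A_{R_1}$ and $v\in B_{R_2}$, constraint~(5) guarantees $R_1\cap R_2\neq\emptyset$, so one simply colors $\{u,v\}$ by any element of $R_1\cap R_2$. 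No Latin-square or tiling design is needed. Color $i$ then only reaches vertices in $\bigcup_{R\ni i}A_R$ on the $A$-side, giving $a_i\le a_i^*$ and similarly $b_i\le b_i^*$, so every color touches at most $g^*(r)\cdot tN$ vertices. In short, the obstacle you anticipate is real for the LP you wrote down, and the paper avoids it by working with the correct, subset-indexed LP from Section~3.
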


\begin{restatable}{theorem}{computeupper}
For all $n,r \in \mathbb{Z}^{+}$, we have $g(n,r) \leq g^*(r) \cdot n + C_r$ for some constant $C_r$ depending on $r$. 
\end{restatable}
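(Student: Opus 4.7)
The plan is to bootstrap from the previous theorem: for each $r$, that theorem guarantees an integer $N$ (depending on $r$) and an $r$-edge coloring of $K_{tN,tN}$ realizing $g(tN,r) = g^*(r) \cdot tN$. For a general $n$ I would write $n = tN + s$ with $0 \leq s < N$ and extend an optimal coloring of $K_{tN,tN}$ to one of $K_{n,n}$ with only an $O(N)$ additive overhead per color. The corner case $t = 0$ (i.e.\ $n < N$) is absorbed into $C_r$ via the trivial bound $g(n,r) \leq 2n$.

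Concretely, I would designate $tN$ of the $n$ vertices on each side of $K_{n,n}$ as \emph{old} and the remaining $s$ as \emph{new}, keeping the optimal coloring $f_0$ on the old-to-old edges. On each new-to-old edge $(u,v)$ (with $u$ new, $v$ old) I assign a color that already touches $v$ under $f_0$; at least one such color exists because $v$ has positive degree in $f_0$. The new-to-new edges can be given an arbitrary color, say color $1$.

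With this construction, for any color $i$ the touched vertex set in the extended coloring is contained in the touched set of $i$ under $f_0$ together with at most the $2s$ new vertices, since no \emph{old} vertex becomes newly touched by $i$: every new-to-old edge was colored with a color already touching its old endpoint, and new-to-new edges touch no old vertex at all. Hence every color touches at most
\begin{equation*}
g^*(r) \cdot tN + 2s \;=\; g^*(r) \cdot n + (2 - g^*(r)) s \;\leq\; g^*(r) \cdot n + 2N,
\end{equation*}
using $g^*(r) \in [0,2]$ and $s < N$, so $C_r = 2N$ suffices. The essential content of the theorem is already packed into the previous theorem; the only real work here is the bookkeeping above, and I do not anticipate a serious obstacle beyond verifying that a valid color choice exists on each new-to-old edge, which is immediate from positivity of degrees.
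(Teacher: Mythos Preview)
Your proof is correct and follows essentially the same approach as the paper. The paper works in the coloring-square picture: it takes the $tN\times tN$ square from Theorem~2 and extends it to an $n\times n$ square by copying the last column $n-tN$ times and then the last row $n-tN$ times. Your rule ``on a new-to-old edge, use a color already touching the old endpoint'' is exactly what this copying achieves, so the underlying idea (no old vertex acquires a new color) is identical; you have just stated it more abstractly rather than picking the specific copied color. The paper obtains $C_r = 2N-2$ against your $2N$, and you handle the $t=0$ case explicitly while the paper leaves it implicit, but these are cosmetic differences.
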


Theorems 1, 2 and 3 say we can basically just study $g^*$ instead of $g$, which we will do in further sections. In Section 4 we show

\begin{restatable}{theorem}{squarelower}
We have $g^*(r) \geq \frac{2}{\sqrt{r}}$ for all $r \in \mathbb{Z}^{+}$. 
\end{restatable}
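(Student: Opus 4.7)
The plan is to reduce the question to a clean lower bound on $g(n,r)$ and then pass to $g^*$ using one of the preceding theorems. Specifically, I will show that in every $r$-edge coloring of $K_{n,n}$, some color touches at least $2n/\sqrt{r}$ vertices, i.e.\ $g(n,r) \geq 2n/\sqrt{r}$.

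The argument is a short double-counting via AM--GM. Fix a coloring $f \in [r]^{E(K_{n,n})}$ and, for each color $i \in [r]$, let $a_i$ be the number of vertices in $A_n$ touched by $i$ and $b_i$ the number in $B_n$ touched by $i$, so that the number of vertices touched by $i$ is $a_i + b_i$. Every edge of color $i$ has both endpoints among these touched vertices, so the number of edges of color $i$ is at most $a_i b_i \leq ((a_i+b_i)/2)^2$. Assume toward a contradiction that every color touches fewer than $2n/\sqrt{r}$ vertices; then $a_i + b_i < 2n/\sqrt{r}$, so $a_i b_i < n^2/r$. Summing over colors gives $n^2 = |E(K_{n,n})| = \sum_{i=1}^r a_i b_i < r \cdot (n^2/r) = n^2$, a contradiction. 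Hence some color touches at least $2n/\sqrt{r}$ vertices and $g(n,r) \geq 2n/\sqrt{r}$.

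To transfer this to $g^*$, I would use Theorem 3: for every $n$ we have $g(n,r) \leq g^*(r)\cdot n + C_r$, so
$$g^*(r) \geq \frac{g(n,r) - C_r}{n} \geq \frac{2}{\sqrt{r}} - \frac{C_r}{n}.$$
Letting $n \to \infty$ yields $g^*(r) \geq 2/\sqrt{r}$. (Alternatively, Theorem 2 gives equality $g(tN,r) = g^*(r)\cdot tN$ along a subsequence, from which the bound follows by dividing.)

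I do not anticipate any real obstacle here: the bipartite structure makes the AM--GM step tight exactly when $a_i = b_i$, and the proof is essentially an observation that a color touching few vertices can only cover few edges. The only subtlety is that the strict inequality in the assumption must be preserved through the summation, which is handled by the contrapositive formulation above.
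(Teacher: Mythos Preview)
Your proof is correct; the core idea (the total ``area'' $\sum_i a_ib_i$ must cover the square, then pigeonhole plus AM--GM) is exactly the paper's. The paper, however, works directly at the level of $g^*$: Lemma~1 establishes $\sum_i a_ib_i \geq 1$ for any feasible point of the coloring problem using only constraints (2), (3), (5), and then one line of AM--GM finishes. You instead prove $g(n,r)\geq 2n/\sqrt{r}$ combinatorially by edge counting on $K_{n,n}$ and then pass back to $g^*$ via Theorem~2 or~3. Your route is a small detour but equally valid; the paper's is slightly cleaner because Lemma~1 is stated once and reused throughout Sections~5--6, so no limiting argument is needed. One small slip: your displayed chain should read $n^2 = |E(K_{n,n})| \leq \sum_i a_ib_i$, not equality, since $a_ib_i$ only upper-bounds the number of color-$i$ edges; the contradiction still goes through.
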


\noindent
In Section 4 we also show 

\begin{restatable}{theorem}{square}
We have $g^*(t^2) = \frac{2}{t}$ for all $t \in \mathbb{Z}^{+}$. 
\end{restatable}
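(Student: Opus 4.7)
The plan is to sandwich $g^*(t^2)$ between $2/t$ from below and $2/t$ from above. The lower bound is free: Theorem 4 gives $g^*(t^2) \geq 2/\sqrt{t^2} = 2/t$, so the only real work is producing the matching upper bound $g^*(t^2) \leq 2/t$.

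For the upper bound I would exhibit an explicit coloring on a single well-chosen value of $n$ and then invoke Theorem 1 to pass the bound down to $g^*$. Specifically, take $n = t$ (or any multiple of $t$) and partition $A_n = A_1 \sqcup \cdots \sqcup A_t$ and $B_n = B_1 \sqcup \cdots \sqcup B_t$ into $t$ blocks of size $n/t$ each. Index the $t^2$ colors by pairs $(i,j) \in [t] \times [t]$ and assign color $(i,j)$ to every edge between $A_i$ and $B_j$. This is a legitimate $t^2$-edge coloring of $K_{n,n}$: every edge has one endpoint in some $A_i$ and one in some $B_j$, so it receives a unique color $(i,j)$.

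Next I would verify the touching counts. The color $(i,j)$ appears only on edges in the complete bipartite graph between $A_i$ and $B_j$, so the set of vertices it touches is exactly $A_i \cup B_j$, which has size $2n/t$. Taking the maximum over colors still yields $2n/t$, so
\[
g(n, t^2) \leq \max_{(i,j) \in [t]^2} |A_i \cup B_j| = \tfrac{2n}{t}
\]
whenever $t \mid n$. By Theorem 1, $g^*(t^2) \cdot n \leq g(n, t^2) \leq 2n/t$, and dividing by $n$ gives $g^*(t^2) \leq 2/t$. Combining with $g^*(t^2) \geq 2/t$ from Theorem 4 yields equality.

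I do not expect any serious obstacle here: the construction is the obvious ``tensor'' of an optimal coloring on each side, and the lower bound is already done. The only mild subtlety is that the construction requires $t \mid n$, but because Theorem 1 holds for every $n \in \mathbb{Z}^{+}$, a single divisible $n$ is enough to pin down the value of $g^*(t^2)$.
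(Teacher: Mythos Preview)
Your proof is correct and essentially the same as the paper's: both use Theorem~4 for the lower bound and the $t\times t$ grid construction for the upper bound. The only cosmetic difference is that the paper feeds the grid construction directly into the $a(R),b(R)$ formalism to witness $m=2/t$, whereas you realize it as an actual coloring of $K_{n,n}$ (for $t\mid n$) and then invoke Theorem~1 to transfer the bound $g(n,t^2)\le 2n/t$ back to $g^*(t^2)\le 2/t$; both routes are valid.
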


\noindent
 In Section 5 we determine the values of $g^*(r)$ for $r \leq 8$ up to a constant additive term depending on $r$. In Section 6 we show 

\begin{restatable}{theorem}{minus}
We have $g^*(t^2-1) = \frac{1}{t} - \frac{1}{t^2} + \frac{1}{t-1}$ for all $t \geq 3$ where $t \in \mathbb{Z}^{+}$.
\end{restatable}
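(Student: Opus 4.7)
We prove the identity $g^*(t^2-1) = \frac{1}{t} - \frac{1}{t^2} + \frac{1}{t-1}$ by establishing matching upper and lower bounds. For brevity set $S^* := \bigl(\frac{1}{t} - \frac{1}{t^2} + \frac{1}{t-1}\bigr) n = \frac{(t-1)n}{t^2} + \frac{n}{t-1}$ for the target maximum touching size.

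The plan for the upper bound is to construct an explicit $(t^2-1)$-edge coloring of $K_{n,n}$, for $n$ divisible by some constant $N_t$, achieving maximum touching at most $S^*$. The construction adapts the natural $t \times t$ grid coloring that witnesses $g^*(t^2) = 2/t$. I would partition $B_n$ into $t-1$ equal parts of size $\frac{n}{t-1}$, and partition $A_n$ into a mix of ``small'' blocks of size $\frac{(t-1)n}{t^2}$ and one ``large'' block that absorbs the remainder. Most color classes are Cartesian products of a small $A$-block with a $B$-part, touching exactly $S^*$ vertices. The large $A$-block is handled by splitting its cells across the $B$-parts into an appropriate number of additional color classes, tuned so that the total count is $(t-1)^2 + 2(t-1) = t^2 - 1$ and every color touches at most $S^*$ vertices. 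Applying Theorem 3 then gives $g^*(t^2-1) \leq \frac{1}{t} - \frac{1}{t^2} + \frac{1}{t-1}$.

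For the lower bound, let $f$ be any $(t^2-1)$-edge coloring of $K_{n,n}$, let $a_i, b_i$ be the number of $A$- and $B$-vertices touched by color $i$, and set $S := \max_i (a_i + b_i)$. The aim is to show $S \geq S^* - O(1)$. The two key inputs are (i) the global coverage inequality $\sum_i a_i b_i \geq n^2$, from which Theorem 4 extracts $S \geq \frac{2n}{\sqrt{t^2-1}}$ via AM--GM, and (ii) the per-vertex coverage inequality $\sum_{i : u \in A_i} b_i \geq n$ for every $u \in A_n$ (and symmetrically on $B_n$). Input (ii), combined with the integrality of $|I(u)| := |\{i : u \in A_i\}|$, yields $\sum_i a_i \geq n \lceil n / \max_i b_i \rceil$ and symmetrically for $\sum_i b_i$. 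A case analysis on the largest $b_i$ --- considering whether $\max_i b_i \leq \frac{n}{t-1}$ or $\max_i b_i > \frac{n}{t-1}$, and exploiting that $r = t^2 - 1$ factors as $(t-1)(t+1)$ rather than as $t \cdot t$ --- eventually forces $S \geq S^* - O(1)$. Theorem 1 then completes the lower bound on $g^*$.

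The main obstacle is the lower bound: the elementary counting in Theorem 4 yields only $\frac{2n}{\sqrt{t^2 - 1}}$, and closing the $\Theta(n/t^3)$-sized gap to the tight value $S^*$ requires carefully combining the per-vertex coverage constraint with integrality, exploiting the specific fact that $r = t^2 - 1$ is one less than a perfect square (so that a balanced ``square tiling'' is not quite achievable). The upper bound construction, while requiring careful parameter tuning, is a relatively natural modification of the $t^2$ grid coloring.
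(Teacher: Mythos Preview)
Your upper-bound sketch diverges from the paper's construction and, as written, does not achieve the target. You partition $B_n$ into $t-1$ equal parts and claim $(t-1)^2 + 2(t-1)$ color classes, but if the $(t-1)$ ``small'' $A$-blocks each have width $\frac{(t-1)n}{t^2}$, the leftover ``large'' $A$-block has width $\frac{(2t-1)n}{t^2}$, and any color supported on it together with a full $B$-part of size $\frac{n}{t-1}$ already touches more than $S^*$ vertices (check $t=3$). The paper instead uses a \emph{non-uniform} row partition: $t-1$ left columns of width $\frac{1}{t-1}-\frac{1}{t^2}$ each split into $t$ rows, and one right column of width $\frac{1}{t}-\frac{1}{t^2}$ split into $t-1$ rows, giving $(t-1)t + (t-1) = t^2-1$ colors all touching exactly $S^*$.

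The more serious gap is in the lower bound. Your inputs (i) and (ii) are correct but insufficient: averaging $\sum_i(a_i+b_i)$ using only ``every vertex touches at least $\lceil n/\max_j b_j\rceil$ colors'' gives at best $\frac{(2t-1)n}{t^2-1}$, which is strictly smaller than $S^*$ (e.g.\ $\frac{5}{8}<\frac{13}{18}$ at $t=3$). The paper's argument requires substantially more structure. Working in the $a(R),b(R)$ framework, it shows via area/Lagrange-multiplier lemmas that (Claim~1) no row or column can carry $\le t-2$ colors, (Claim~2) one cannot have both a row and a column carrying exactly $t-1$ colors, and crucially (Claim~3) there is at most \emph{one} color-set $R$ of size $t-1$ with $a(R)>0$. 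This uniqueness is what pins down $a([t-1]) \ge \frac{1}{t}-\frac{1}{t^2}$ exactly, and together with $\sum_{i\in[t-1]} b_i \ge 1$ yields the sharp bound. Your ``case analysis on the largest $b_i$'' does not contain these structural claims, and without something equivalent to Claim~3 there is no mechanism to extract the extra $\Theta(1/t^3)$ beyond the averaging bound.
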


\noindent
In Section 7 we show 

\begin{restatable}{theorem}{univone}
Let $r \in \mathbb{Z}^{+}$. Suppose there exists $t \in \mathbb{Z}^{+}$ such that $t^2 \leq r \leq t(t+1)$. Then

$$g^*(r) \leq  \frac{2}{t} + \frac{1}{t+1} + \frac{r}{t(t+1)} - \frac{r}{t^2}.$$
\end{restatable}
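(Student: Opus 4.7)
The plan is to exhibit, for each $r$ with $t^2 \leq r \leq t(t+1)$ and all sufficiently large $n$ divisible by $t^2(t+1)$, an $r$-coloring of $K_{n,n}$ in which every color touches exactly $D := \left(\frac{2}{t} + \frac{1}{t+1} + \frac{r}{t(t+1)} - \frac{r}{t^2}\right) n$ vertices. Writing $k := r - t^2 \in \{0, 1, \ldots, t\}$, a short identity shows this target equals $D = \frac{2n}{t} - \frac{kn}{t^2(t+1)}$. Given such a coloring, Theorem 1 yields $g^*(r) \cdot n \leq g(n, r) \leq D$, and dividing by $n$ gives the claimed bound on $g^*(r)$.

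The construction is guided by the observation that $D/n$ interpolates linearly in $r$ between $\frac{2}{t}$ at $r = t^2$ (attained by the standard $t \times t$ grid coloring underlying Theorem 6) and $\frac{1}{t} + \frac{1}{t+1}$ at $r = t(t+1)$ (attained by the standard $t \times (t+1)$ grid coloring). I would therefore mix the two row structures: partition $A_n$ into $t$ disjoint blocks, with $k$ \emph{large} blocks of size $a_L := D - \frac{n}{t+1}$ and $t - k$ \emph{small} blocks of size $a_S := D - \frac{n}{t}$. For each large block, independently partition $B_n$ into $t + 1$ equal parts of size $\frac{n}{t+1}$ and assign $t + 1$ new colors, one per resulting biclique; for each small block, independently partition $B_n$ into $t$ equal parts of size $\frac{n}{t}$ and assign $t$ new colors, one per resulting biclique. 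Distinct A-blocks use disjoint color sets, so the total number of colors is $k(t+1) + (t-k)t = t^2 + k = r$, and each color is a biclique whose touch count is either $a_L + \frac{n}{t+1} = D$ or $a_S + \frac{n}{t} = D$.

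The only substantive thing to verify is consistency of the A-partition: one needs $k a_L + (t - k) a_S = n$. Substituting the definitions of $a_L, a_S$ collapses this to the single linear equation $tD - \frac{kn}{t+1} - \frac{(t - k) n}{t} = n$ in the unknown $D$, which has unique solution $D = \frac{2n}{t} - \frac{kn}{t^2(t+1)}$---precisely the target. It is then routine to confirm that $0 \leq k \leq t$ forces $a_L, a_S \geq 0$, and that $n$ divisible by $t^2(t+1)$ makes all block and part sizes integers, with general $n$ handled via Theorems 1 through 3. The main thing one has to see is that the $B$-partitions for different A-blocks do not need to agree; attempting instead to fix a single global partition of $B_n$ and to reduce the color count by merging cells across rows runs into a shortfall of $\frac{kn}{t(t+1)}$ in the total column mass and cannot achieve $D$ for $0 < k < t$.
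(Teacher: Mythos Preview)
Your construction is correct and is essentially identical to the paper's: writing $k=r-t^2$, both partition the $A$-side into $t$ blocks, $k$ of which are paired with a $(t+1)$-fold partition of $B$ and $t-k$ with a $t$-fold partition, with block widths chosen so that every color's footprint equals the target $D$. The only cosmetic difference is that the paper works directly in the continuous $[0,1]^2$ model (producing $a(R),b(R)$ satisfying the coloring constraints) rather than building a discrete coloring of $K_{n,n}$ and invoking Theorem~1.
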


\begin{restatable}{theorem}{univtwo}
Let $r \in \mathbb{Z}^{+}$. Suppose there exists $t \in \mathbb{Z}^{+}$ such that $t(t+1) \leq r \leq (t+1)^2$. Then

$$g^*(r) \leq  \frac{2}{t+1}+\frac{1}{t}+\frac{r}{(t+1)^2}-\frac{r}{t(t+1)}.$$
\end{restatable}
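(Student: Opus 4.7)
The plan is to exhibit an explicit coloring of $K_{n,n}$ for $n=t(t+1)^2$ that uses exactly $r$ colors and whose maximum color-touch count equals $\alpha n$, where $\alpha$ is the right-hand side of the theorem. The construction interpolates between the balanced $t \times (t+1)$ grid coloring (which yields $g^*(t(t+1)) \le \tfrac{1}{t}+\tfrac{1}{t+1}$) at the lower endpoint and the balanced $(t+1) \times (t+1)$ grid coloring (which yields $g^*((t+1)^2) \le \tfrac{2}{t+1}$) at the upper endpoint, by introducing a non-uniform partition on the $B_n$ side.

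Write $r = t(t+1) + k$ with $0 \le k \le t+1$. A short algebraic manipulation (collecting the two $r$-dependent terms and adding and subtracting $\tfrac{1}{t+1}$) shows that the theorem bound equals $\alpha := \tfrac{1}{t+1} + \tfrac{1}{t} - \tfrac{k}{t(t+1)^2}$. Set $n = t(t+1)^2$ and partition $B_n = B_1 \sqcup \cdots \sqcup B_{t+1}$ so that $|B_j| = (\alpha - \tfrac{1}{t+1})\,n$ for $j \le k$ and $|B_j| = (\alpha - \tfrac{1}{t})\,n$ for $j > k$; the value of $\alpha$ forces $\sum_j |B_j| = n$, and both block sizes are nonnegative when $k \le t+1$. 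For each $j$, partition $A_n$ into $r_j$ equal blocks of size $n/r_j$, with $r_j = t+1$ if $j \le k$ and $r_j = t$ otherwise, and assign a distinct color to each bipartite product $A_i^{(j)} \times B_j$.

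Verification is then immediate. The coloring is well-defined because every edge $(a,b)$ lies in a unique rectangle $A_i^{(j)} \times B_j$ (the index $j$ is determined by $b$, and $i$ by $a$). The total number of colors used is $k(t+1) + ((t+1)-k)\,t = t(t+1)+k = r$. By construction, each color $(i,j)$ touches exactly $|A_i^{(j)}| + |B_j| = \tfrac{n}{r_j} + \bigl(\alpha - \tfrac{1}{r_j}\bigr) n = \alpha n$ vertices. Hence $g(n,r) \le \alpha n$ for $n = t(t+1)^2$, and Theorem 1 then yields $g^*(r) \le \alpha$, as claimed.

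The main conceptual step is choosing the block sizes of $B_n$ so that every monochromatic biclique $A_i^{(j)} \times B_j$ touches the same total number of vertices; this forces $\tfrac{|B_j|}{n} + \tfrac{1}{r_j}$ to be constant in $j$. Once this ansatz is posited, the constraint $\sum_j |B_j| = n$ determines $\alpha$ uniquely, and a short calculation verifies that it matches the theorem bound. Everything else reduces to algebra and integrality bookkeeping.
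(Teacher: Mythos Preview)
Your proof is correct and is essentially the paper's own construction, described in discrete rather than continuous terms and with the roles of $A$ and $B$ transposed: the paper partitions the $A$-axis into $t+1$ column blocks (of which $p=k$ are split into $t+1$ vertical pieces and $q=t+1-k$ into $t$), whereas you partition $B_n$ into $t+1$ row blocks with the analogous $A$-subdivision. Your final step of passing from $g(n,r)\le\alpha n$ to $g^*(r)\le\alpha$ via Theorem~1 is a clean alternative to the paper's direct definition of a unit-square coloring~$\chi$.
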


\noindent
Note Theorems 7 and 8 cover all $r \in \mathbb{Z}^{+}$ (they are exhaustive). In the final Section we will discuss conjectures and a technique not used in the main results of the paper. 

\begin{center}
\section{Coloring Squares}
\end{center}

We first provide a visual representation of an edge coloring $f : E(K_{n,n}) \rightarrow [r]$. Consider when $r = 2$ and $n = 4$. Recall $A_4 = \{u_1,u_2,u_3,u_4\}$ and $B_4 = \{v_1,v_2,v_3,v_4\}$. Consider the coloring $f : E(K_{4,4}) \rightarrow [2]$ defined for $j,k \in \{1,2,3,4\}$ as $f(\{u_j,v_k\}) = 1$ if $j \leq 2$ and $f(\{u_j,v_k\}) = 2$ if $j > 2$. The coloring $f$ can be represented visually by the figure below: \\

\begin{center}
\begin{tikzpicture}
\draw[step=1cm,black] (0,0) grid (4,4);
\draw (0.5,0.5) node{\Large 1};
\draw (0.5,1.5) node{\Large 1};
\draw (0.5,2.5) node{\Large 1};
\draw (0.5,3.5) node{\Large 1};
\draw (1.5,0.5) node{\Large 1};
\draw (1.5,1.5) node{\Large 1};
\draw (1.5,2.5) node{\Large 1};
\draw (1.5,3.5) node{\Large 1};
\draw (2.5,0.5) node{\Large 2};
\draw (2.5,1.5) node{\Large 2};
\draw (2.5,2.5) node{\Large 2};
\draw (2.5,3.5) node{\Large 2};
\draw (3.5,0.5) node{\Large 2};
\draw (3.5,1.5) node{\Large 2};
\draw (3.5,2.5) node{\Large 2};
\draw (3.5,3.5) node{\Large 2};
\draw (0.5, -0.5) node{\Large $u_1$};
\draw (1.5, -0.5) node{\Large $u_2$};
\draw (2.5, -0.5) node{\Large $u_3$};
\draw (3.5, -0.5) node{\Large $u_4$};
\draw (-0.5, 0.5) node{\Large $v_1$};
\draw (-0.5, 1.5) node{\Large $v_2$};
\draw (-0.5, 2.5) node{\Large $v_3$};
\draw (-0.5, 3.5) node{\Large $v_4$};

\draw (2, -1.5) node{\normalsize Figure 1};
\end{tikzpicture}
\end{center} 

\noindent
Since $f(\{u_2,v_3\}) = 1$, the cell at which the column labeled $u_2$ intersects with the row labeled $v_3$ is filled in with the color $1$. Call this the \textbf{coloring square} of $f$. In general, given $f : E(K_{n,n}) \rightarrow [r]$, the coloring square of $f$ is the map $F : A_n \times B_n \rightarrow [r]$, where $F(u,v) = f(\{u,v\})$, but it helps to look at it visually as a matrix. Also, define the projections $\pi_a : A_n \times B_n \rightarrow A_n$ and $\pi_b : A_n \times B_n \rightarrow B_n$ by $\pi_a(u,v) = u$ and $\pi_b(u,v) = v$. \\

 For $u \in A_n$ the \textbf{column} $u$ is the set $\pi_a^{-1}(u) = \{u\} \times B_n$ and for $v \in B_n$ the \textbf{row} $v$ is the set $\pi_b^{-1}(v) = A_n \times \{v\}$. Given a coloring $f : E(K_{n,n}) \rightarrow [r]$ and its coloring square $F: A_n \times B_n \rightarrow [r]$, a column $\{u\} \times B _n$ \textbf{contains} a color $i \in [r]$ if $i \in F(\{u\} \times B_n)$, where $F(\{u\} \times B_n)$ is the image of $F$ on the set $\{u\} \times B_n$, and likewise a row $A_n \times \{v\}$ \textbf{contains} a color $i$ if $i \in F(A_n \times \{v\})$, where $F(A_n \times \{v\})$ is the image of $F$ on the set $A_n \times \{v\}$. To read from $f$'s coloring square $F$ how many vertices color $i \in [r]$ touches, one counts the number of columns and rows containing $i$, i.e.

$$|\pi_a(F^{-1}(i))| + |\pi_b(F^{-1}(i))|.$$

\noindent
 For example, in Figure $1$ all four rows contain color $1$ and columns $u_1,u_2$ contain color $1$, so we have $6$ vertices in total that touch color $1$. Note in Figure 1 we also have that color $2$ touches $6$ vertices, as it is also contained in $6$ rows and columns, and so Figure 1 shows that $g(4,2) \leq 6$, as it is a construction where all colors touch less than or equal to $6$ vertices. \\

Note that the construction above can be generalized for larger (and smaller) $n$. See Figure 2 below. \\

\begin{center}
\begin{tikzpicture}[declare function={a=4;}]
\draw (0,0) coordinate (A) ;
\draw (0,a) coordinate (B) ;
\draw (a,0) coordinate (C) ;
\draw (a,a) coordinate (D) ;
\draw (a/2,0) coordinate (E) ;
\draw (a/2,a) coordinate (F) ;

\draw (E) -- (F) ;
\draw (A) -- (B) ;
\draw (A) -- (C) ;
\draw (B) -- (D) ;
\draw (C) -- (D) ;

\draw (a/4, a/2) node{\huge 1} ;
\draw (3*a/4, a/2) node{\huge 2} ;

\draw[decorate, decoration={brace,raise=3pt,amplitude=4pt}] (A) -- (B) node[midway,xshift=-0.6cm]{\Large $n$};

\draw[decorate, decoration={brace,mirror,raise=3pt,amplitude=4pt}](A) -- (E) node[midway,yshift=-0.6cm]{\Large $\frac{n}{2}$};

\draw[decorate, decoration={brace,mirror,raise=3pt,amplitude=4pt}] (E) -- (C) node[midway,yshift=-0.6cm]{\Large $\frac{n}{2}$};

\draw (a/2, -1.5) node{\normalsize Figure 2} ;
\end{tikzpicture}
\end{center} 

\noindent
Figure 2 is to be interpreted as follows. All cells lying in the region labeled $1$ have color $1$, and all cells lying in the region labeled $2$ have color $2$. We have omitted the vertex labels of the columns and rows, and we have omitted the delineations between distinct cells. There are $n$ rows containing color $1$ and $\frac{n}{2}$ columns containing color $1$, so in total color $1$ touches $\frac{3n}{2}$ vertices. Likewise, there are $\frac{3n}{2}$ rows and columns containing color $2$, so color $2$ also touches $\frac{3n}{2}$ vertices. The construction above shows that $g(n,2) \leq \frac{3n}{2}$ when $2 \mid n$. From now on, we will drop the $n$ in our diagrams and consider \textit{proportions} of rows and columns---see Figure 3 below. \\

\begin{center}
\begin{tikzpicture}[declare function={a=4;}]
\draw (0,0) coordinate (A) ;
\draw (a,0) coordinate (B) ;
\draw (0,a) coordinate (C) ;
\draw (a,a) coordinate (D) ;
\draw (a/2,0) coordinate (E) ;
\draw (a/2,a) coordinate (F) ;

\draw (A) -- (B) ;
\draw (A) -- (C) ;
\draw (C) -- (D) ;
\draw (B) -- (D) ;
\draw (E) -- (F) ;

\draw (a/4, a/2) node{\huge 1} ;
\draw (3*a/4, a/2) node{\huge 2} ;

\draw[decorate, decoration={brace,raise=3pt,amplitude=4pt}] (A) -- (C) node[midway,xshift=-0.6cm]{\Large 1};
\draw[decorate, decoration={brace,mirror,raise=3pt,amplitude=4pt}] (A) -- (E) node[midway,yshift=-0.6cm]{\Large $\frac{1}{2}$};
\draw[decorate, decoration={brace,mirror,raise=3pt,amplitude=4pt}] (E) -- (B) node[midway,yshift=-0.6cm]{\Large $\frac{1}{2}$};

\draw (a/2,-1.5) node{\normalsize Figure 3} ;
\end{tikzpicture}
\end{center}

\begin{center}
\section{Computing $g(n,r)$}
\end{center}

\begin{definition}
Given a coloring $f : E(K_{n,n}) \rightarrow [r]$, for $i \in [r]$ define 

$$a_i = \frac{1}{n} |\{u \in A_n : u \text{ touches } i\}|.$$
\end{definition}

\noindent
In other words, $a_i$ is the proportion of vertices in $A_n$ touching color $i$ and also the proportion of columns containing color $i$. 

\begin{definition}
Let $$b_i = \frac{1}{n} |\{v \in B_n : v \text{ touches } i\}|,$$
\end{definition}

\noindent
In other words, $b_i$ is the proportion of vertices in $B_n$ touching color $i$ and also the proportion of rows containing color $i$. Note the total number of vertices that color $i \in [r]$ touches is $(a_i+b_i)n$. \\

Let functions $a,b : \mathcal{P}([r]) \rightarrow [0,1]$, where $\mathcal{P}([r])$ is the power-set of $[r]$ and $[0,1] \subseteq \mathbb{R}$ is the closed interval from $0$ to $1$. \\

\begin{definition}
For $R \subseteq [r]$, define $$a(R) = \frac{1}{n} |\{u \in A_n : F(\{u\} \times B _n) = R\}|.$$
\end{definition}

\noindent
In other words, $a(R)$ is the proportion of vertices in $A_n$ that touches exactly the colors in $R$ and also the proportion of columns whose set of colors equals $R$. \\

\begin{definition}
For $R \subseteq [r]$, define $$b(R) = \frac{1}{n} |\{v \in B_n : F(A_n \times \{v\}) = R\}|.$$
\end{definition}

\noindent
In other words, $b(R)$ is the proportion of vertices in $B_n$ which touches exactly the colors in $R$ and also the proportion of rows whose set of colors equals $R$. \\

For example, in Figure 3 we have $a(\{1\}) = a(\{2\}) = \frac{1}{2}$ because $\frac{1}{2}$ of the columns contain \textit{only} the color $1$, and $\frac{1}{2}$ of the columns contain \textit{only} the color $2$, but $a(\{1,2\}) = 0$ since no column contains \textit{both} the colors $1$ and $2$. We have $b(\{1,2\}) = 1$ because all rows contain \textit{both} the colors $1$ and $2$, but $b(\{1\}) = b(\{2\}) = 0$ because no row contains \textit{only} the color $1$ or \textit{only} the color $2$. Note that we always have $a(\{\}) = b(\{\}) = 0$ because every row and column must contain at least one color if we are to color all the edges. Additionally, we have that for $R_1,R_2 \subseteq [r]$ with $R_1 \cap R_2 = \{\}$, we must have $a(R_1) = 0$ or $b(R_2) = 0$, otherwise there'd be a column $\{u\} \times B _n$ with colors $R_1 = F(\{u\} \times B _n)$ and a row $A_n \times \{v\}$ with colors $R_2 = F(A_n \times \{v\})$, so the cell $(\{u\} \times B _n) \cap (A_n \times \{v\}) = (u,v)$ at which they intersect cannot be colored as $R_1 \cap R_2 = \{\}$, i.e. $F$ would be undefined at $(u,v)$. \\

We will identify some constraints on $a_i,b_i,a(R),b(R)$ that will allow us to compute $g(n,r)$ up to a constant depending on $r$ with multiple iterations of a linear program. \\

$$\sum_{R \subseteq [r]} a(R) = \sum_{R \subseteq [r]} b(R) = 1$$

\noindent
which comes from 

$$\sum_{R \subseteq [r]} a(R) \cdot n = \sum_{R \subseteq [r]} |\{u \in A_n : F(\{u\} \times B _n) = R\}| = |A_n| = n$$

\noindent
and dividing both sides by $n$. A similar argument shows $\sum_{R \subseteq [r]} b(R) = 1$. We also have for $i \in [r]$ that

$$a_i = \sum_{R \subseteq [r], i \in R} a(R)$$
$$b_i = \sum_{R \subseteq [r], i \in R} b(R).$$

Under these conditions, we almost have a linear program with variables $a_i^*,b_i^*$ for $i \in [r]$ and $a(R)^*,b(R)^*$ for $R \subseteq [r]$. The $^*$ is to distinguish variables in a linear program from values $a_i,b_i,a(R),b(R)$ corresponding to a coloring $f$. In particular, for $R \subseteq [r]$ we have the linear constraints

\begin{equation}
\begin{split}
0 &\leq a(R)^* \leq 1 \\
0 &\leq b(R)^* \leq 1.
\end{split}
\end{equation}

\noindent
We have the linear constraints

\begin{equation} 
\sum_{R \subseteq [r]} a(R)^* = \sum_{R \subseteq [r]} b(R)^* = 1.
\end{equation}

\noindent
For $i \in [r]$, we have the linear constraints

\begin{equation}
\begin{split}
a_i^* &= \sum_{R \subseteq [r], i \in R} a(R)^* \\
b_i^* &= \sum_{R \subseteq [r], i \in R} b(R)^*.
\end{split}
\end{equation}

\noindent
We define a new variable $m$ where for all $i \in [r]$, we have 

\begin{equation}
m \geq a_i^* + b_i^*.
\end{equation}

\noindent
The only non-linear constraint we have is:

\begin{equation}
\text{If $R_1,R_2 \subseteq [r]$ and $R_1 \cap R_2 = \{\}$, then $a(R_1)^* = 0$ or $b(R_2)^* = 0$}
\end{equation}

\noindent
and the linear objective is 

\begin{equation}
\text{Minimize $m$.}
\end{equation}

We now show how we can resolve the non-linear constraint (5). Define the function $h_r : \mathcal{P}(\mathcal{P}([r])) \times \mathcal{P}(\mathcal{P}([r])) \rightarrow [0,2]$, where for $(P_1,P_2) \in  \mathcal{P}(\mathcal{P}([r])) \times \mathcal{P}(\mathcal{P}([r]))$, we have that $h_r(P_1,P_2) = 2$ if there exists $R_1 \in P_1$ and $R_2 \in P_2$ such that $R_1 \cap R_2 = \{\}$, otherwise $h_r(P_1,P_2)$ is the value of $m$ in the linear program with linear constraints (1), (2), (3), (4) and linear objective (6) along with additional linear constraints $a(R)^* = 0$ for all $R_1 \not \in P_1$ and $b(R)^* = 0$ for all $R_2 \not \in P_2$, which is feasible because the constraints give rise to a compact space and the objective is continuous with respect to all the variables (and so $h_r$ is well-defined). We define a function $g^* : \mathbb{Z}^{+} \rightarrow [0,2]$.

\begin{definition}
Let $$g^*(r) = \min_{(P_1,P_2) \in  \mathcal{P}(\mathcal{P}([r])) \times \mathcal{P}(\mathcal{P}([r]))} h_r(P_1,P_2).$$  
\end{definition}

We claim $g^*(r)$ is precisely the minimum value $m'$ of $m$ in the original problem subject to the linear constraints (1), (2), (3), (4), and the non-linear constraint (5), which we will call the \textbf{coloring constraints on $r$ colors}, but we will drop the ``on $r$ colors" part as it will be clear from context. We call the coloring constraints along with linear objective (6) the \textbf{coloring problem on $r$ colors}, where we will also drop the ``on $r$ colors" part as it will be clear from context.\\

We have $g^*(r) \geq m'$ because for all $(P_1,P_2) \in  \mathcal{P}(\mathcal{P}([r])) \times \mathcal{P}(\mathcal{P}([r]))$ we have $h_r(P_1,P_2) \geq m'$, since either there exists $R_1 \in P_1$ and $R_2 \in P_2$ such that $R_1 \cap R_2 = \{\}$ in which case $h_r(P_1,P_2) = 2 \geq m'$ (note $2 \geq m'$ because we can set $a(\{1\})^* = b(\{1\})^* = 1$ and $a(R)^* = b(R)^* = 0$ for all $R \in \mathcal{P}(R) \setminus \{\{1\}\}$, and $a_1^* = b_1^* = 1$ and $a_i^* = b_i^* = 0$ for all $i \in [r] \setminus \{1\}$, and finally we set $m = 2$). Otherwise, there exists a solution $a(R)^*, b(R)^*, a_i^*, b_i^*, m$ satisfying the coloring constraints with $h_r(P_1,P_2) = m \geq m'$. \\

We now show $g^*(r) \leq m'$. Note there exists a solution $a(R)^*,b(R)^*,a_i^*,b_i^*,m'$ satisfying the coloring constraints. Define $P_1 = \{R \subseteq [r] : a(R)^* \neq 0\}$ and $P_2 = \{R \subseteq [r] : b(R)^* \neq 0 \}$. This gives us $h_r(P_1,P_2) \leq m'$ by constraint (5) and the definition of $h_r$.  \\

One can see that $g^*(r)$ is computable by iterating over all $2^{2^r} \times 2^{2^r}$ pairs $(P_1,P_2) \in  \mathcal{P}(\mathcal{P}([r])) \times \mathcal{P}(\mathcal{P}([r]))$ and taking the minimum $h_r(P_1,P_2)$; we know $h_r(P_1,P_2)$ is computable because checking if there exists $R_1 \in P_1$ and $R_2 \in P_2$ such that $R_1 \cap R_2 = \{\}$ is computable, and if there doesn't exist such $R_1,R_2$, then $h_r(P_1,P_2)$ is computed by a linear program. \\

\computelower*

\begin{proof}
Consider any coloring $f : E(K_{n,n}) \rightarrow [r]$. It has a coloring square with corresponding values $a_i,b_i$ for $i \in [r]$ and $a(R),b(R)$ for $R \subseteq [r]$ which satisfies constraints (1), (2), (3), (5), where we set $a_i^* = a_i, b_i^* = b_i, a(R)^* = a(R)$ and $b(R)^* = b(R)$. Furthermore, if we set $m = \max_{i \in [r]} a_i + b_i$, then we also satisfy constraint (4). Thus, 

$$\max_{i \in [r]} |\{v \in V(K_{n,n}) : i \text{ touches } v\}| = \max_{i \in [r]} (a_i + b_i)n = m \cdot n \geq g^*(r) \cdot n.$$

\noindent
Note that this holds for all colorings $f : E(K_{n,n}) \rightarrow [r]$, so in particular

$$\min_{f \in [r]^{E(K_{n,n})}} \ \max_{i \in [r]} |\{v \in V(K_{n,n}) : i \text{ touches } v \}| \geq g^*(r) \cdot n$$

\noindent
and therefore $g(n,r) \geq g^*(r) \cdot n$, as required. 
\end{proof}

So we know $g^*(r)$ provides a lower bound to our original question. But is it tight? The answer is yes under certain divisibility conditions. \\

\compute*

\begin{proof}
Note there exists rational solutions to $m = g^*(r)$, $a_i^*,b_i^*$ for $i \in [r]$, and $a(R)^*,b(R)^*$ for $R \subseteq [r]$ to the coloring problem. This is because the coefficients of the variables in the linear constraints in (1), (2), (3), (4) and linear objective in (6) are all rational. It is a fact that a feasible linear program with rational coefficients has a rational optimal solution \cite{ration}. Thus for all $(P_1,P_2) \in \mathcal{P}(\mathcal{P}([r])) \times \mathcal{P}(\mathcal{P}([r]))$, there exists a solution $m=h(P_1,P_2),a(R)^*,b(R)^*,a_i^*,b_i^*$ satisfying the constraints (1), (2), (3) and (4), where all the variables are rational. Recall $g^*(r)$ is the minimum of $h_r(P_1,P_2)$ over all $(P_1,P_2) \in \mathcal{P}(\mathcal{P}([r])) \times \mathcal{P}(\mathcal{P}([r]))$, and thus there exists $(P_1,P_2) \in \mathcal{P}(\mathcal{P}([r])) \times \mathcal{P}(\mathcal{P}([r]))$ such that $m=g^*(r)=h(P_1,P_2),a(R)^*,b(R)^*,a_i^*,b_i^*$ is a solution to the coloring problem where all the variables are rational. \\

So let $m$, $a_i^*,b_i^*$ for $i \in [r]$, and $a(R)^*,b(R)^*$ for $R \subseteq [r]$ be a rational solution to the coloring problem. Let $N$ be the least common multiple of all the denominators of $a(R)^*$ and $b(R)^*$ for $R \subseteq [r]$. For $t \in \mathbb{Z}^{+}$, we can construct a coloring $f : E(K_{tN,tN}) \rightarrow [r]$ such that 

$$\max_{i \in [r]} |\{v \in V(K_{tN,tN}) : i \text{ touches } v\}| \leq m \cdot n.$$

\noindent
Because $N$ was chosen to be the least common multiple of the denominators of $a(R)^*$ and $b(R)^*$ for $R \subseteq [r]$ and because of constraints (1) and (2), we can partition the $tN$ vertices in $A_{tN}$ into $A_{tN} = \bigsqcup_{R \subseteq [r]} A_R$ (here $\sqcup$ means disjoint union) where for $R \subseteq [r]$ we have $|A_R| = a(R)^* \cdot tN$, and likewise we can partition the $tN$ vertices in $B_{tN}$ into $B_{tN} = \bigsqcup_{R \subseteq [r]} B_R$ where for $R \subseteq [r]$ we have $|B_R| = b(R)^* \cdot tN$. For vertex $u \in A_{R_1}$ and vertex $v \in B_{R_2}$ where $R_1,R_2 \subseteq [r]$, we define $f(u,v) = i$ where $i$ can be anything in $R_1 \cap R_2$. Note $R_1 \cap R_2 \neq \{\}$ because of constraint (5). \\

We claim that $a_i \leq a_i^*$. This inequality is easier to see in the form $a_i \cdot tN \leq a_i^* \cdot tN$. First note that if for some $u \in A_{tN}$ we had $i \in F(\{u\} \times B_{tN})$, i.e. $u$ touches $i$, this must mean $u \in A_R$ for some $R \subseteq [r], i \in R$ by the definition of $f$. Thus we have 

$$\{u \in A_{tN} : u \text{ touches } i \} \subseteq \bigsqcup_{R \subseteq [r], \ i \in R} A_R$$

\noindent
and so 

$$a_i \cdot n = |\{u \in A_{tN} : u \text{ touches } i \}| \leq |\bigsqcup_{R \subseteq [r], \ i \in R} A_R|  = \sum_{R \subseteq [r], \ i \in R} a(R)^* \cdot tN= a_i^* \cdot tN,$$

\noindent
where the last equality comes from constraint (3). Likewise $b_i \leq b_i^*$. Thus

$$\max_{i \in [r]} |\{v \in V(K_{tN,tN}) : i \text{ touches } v\}| \leq \max_{i \in [r]} (a_i+b_i)tN \leq \max_{i \in [r]} (a_i^*+b_i^*)tN \leq mtN $$

\noindent
and so $g(tN,r) \leq mtN = g^*(r) \cdot tN$. But by Theorem 1 we also had $g(tN,r) \geq g^*(r) \cdot tN$ and so $g(tN,r) = g^*(r) \cdot tN$, as required. 
\end{proof}

\computeupper*

\begin{proof}
As in Theorem 2, let $m=g^*(r),a_i^*,b_i^*,a(R)^*,b(R)^*$ where $i \in [r]$ and $R \subseteq [r]$ be a rational solution to the coloring problem and let $N$ be the least common multiple of all the denominators of $a(R)^*,b(R)^*$ for all $R \subseteq [r]$. Let $C_r = 2N-2$.  Let $t = \lfloor \frac{n}{N} \rfloor$. As in Theorem 2, construct the coloring square $F : A_{tN} \times B_{tN} \rightarrow [r]$ satisfying $a_i \cdot tN +b_i \cdot tN \leq  g^*(r) \cdot tN$ for all $i \in [r]$. \\

We extend $F : A_{tN} \times B_{tN} \rightarrow [r]$ to $F' : A_{n} \times B_{n} \rightarrow [r]$ as follows. For $(i,j) \in [tN] \times [tN]$, define $F'(u_i,v_j) = F(u_i,v_j)$. Then for $(i,j) \in ([n] \setminus [tN]) \times [tN]$, define $F'(u_i,v_j) = F(u_{tN},v_j)$. Then for $(i,j) \in [n] \times ([n] \setminus [tN])$, define $F'(u_i,v_j) = F'(u_i,v_{tN})$. Visually, what we did was we first copied column $u_{tN}$ for $n-tN$ number of times to extend $F$ to the right, and then we copied the extended row $v_{tN}$ for $n-tN$ number of times to extend $F$ upwards. \\

Note when we extended $F$ to the right no color $i \in [r]$ touched any new row, so color $i$ can only touch more columns, namely the $n-tN \leq N-1$ columns we just added. Likewise, when we extended $F$ upwards no color $i \in [r]$ touched any new column, so color $i$ can only touch more rows, namely the $n-tN \leq N-1$ rows we just added. Thus for any color $i \in [r]$, the number of columns and rows it touches in $F'$ is upper bounded by 

$$a_i \cdot tN +b_i \cdot tN + 2(N-1)\leq g^*(r) \cdot tN + C_r \leq g^*(r) \cdot n + C_r.$$

\noindent
Thus $F'$ is a construction showing $g(n,r) \leq g^*(r) \cdot n + C_r$, as required.
\end{proof}

Theorems 1, 2, and 3 say that we can basically just study $g^*(r)$ instead of $g(n,r)$. This is what we'll be doing for the rest of the paper, and we will also replace $a_i^*$ with $a_i$, $b_i^*$ with $b_i$, $a(R)^*$ with $a(R)$, and $b(R)^*$ with $b(R)$ for notational convenience. In particular, given a solution that solves the coloring problem, for $i \in [r]$ we will have $a_i,b_i$ be the values of $a_i^*,b_i^*$ in that solution respectively, and for $R \subseteq [r]$ we will have $a(R), b(R)$ be the values of $a(R)^*, b(R)^*$ in that solution respectively. Note that functions $a,b : \mathcal{P}([r]) \rightarrow [0,1]$ satisfying constraint (3) determine the values of $a_i,b_i$ for all $i \in [r]$, so in order to specify a solution we just have to define $a(R),b(R)$ for all $R \subseteq [r]$. \\

\begin{center}
\section{$g^*(t^2) = \frac{2}{t}$}
\end{center}

In this Section we will show $g^*(t^2) = \frac{2}{t}$. We begin with a lemma. \\

\begin{lemma}
For any $a,b : \mathcal{P}([r]) \rightarrow [0,1]$ satisfying constraints (2), (3), and (5), we must have $\sum_{i \in [r]} a_ib_i \geq 1$.
\end{lemma}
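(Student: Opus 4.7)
The plan is to rewrite $\sum_{i \in [r]} a_i b_i$ as a double sum indexed by pairs of subsets $(R_1, R_2) \in \mathcal{P}([r]) \times \mathcal{P}([r])$, and then use constraint (5) to lower-bound the resulting expression by $\sum_{R_1, R_2} a(R_1) b(R_2) = 1$.

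First I would substitute the formulas from constraint (3), namely $a_i = \sum_{R_1 \ni i} a(R_1)$ and $b_i = \sum_{R_2 \ni i} b(R_2)$, into $\sum_{i \in [r]} a_i b_i$. Expanding the product and swapping the order of summation yields
$$\sum_{i \in [r]} a_i b_i = \sum_{i \in [r]} \Bigl(\sum_{R_1 \ni i} a(R_1)\Bigr) \Bigl(\sum_{R_2 \ni i} b(R_2)\Bigr) = \sum_{R_1, R_2 \subseteq [r]} a(R_1) b(R_2) \, |R_1 \cap R_2|,$$
because for each fixed pair $(R_1, R_2)$ the number of indices $i \in [r]$ with $i \in R_1$ and $i \in R_2$ is exactly $|R_1 \cap R_2|$.

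Next I would invoke constraint (5). Its contrapositive says that whenever $a(R_1) b(R_2) > 0$ we must have $R_1 \cap R_2 \neq \emptyset$, so $|R_1 \cap R_2| \geq 1$; for all remaining pairs the summand $a(R_1) b(R_2) |R_1 \cap R_2|$ is already $0$. Hence termwise $a(R_1) b(R_2) |R_1 \cap R_2| \geq a(R_1) b(R_2)$ for every pair, and summing over $(R_1, R_2)$ gives
$$\sum_{i \in [r]} a_i b_i \;\geq\; \sum_{R_1, R_2 \subseteq [r]} a(R_1) b(R_2) \;=\; \Bigl(\sum_{R \subseteq [r]} a(R)\Bigr)\Bigl(\sum_{R \subseteq [r]} b(R)\Bigr) \;=\; 1,$$
where the final equality is constraint (2).

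I don't anticipate any real obstacle; the argument is essentially a reindexing identity followed by the observation that constraint (5) promotes ``nonzero coefficient'' to ``$|R_1 \cap R_2| \geq 1$''. The only bookkeeping to double-check is that the empty-set terms cause no trouble, and they don't: if $R_1 = \emptyset$ or $R_2 = \emptyset$ then $|R_1 \cap R_2| = 0$ and both sides of the termwise inequality are zero.
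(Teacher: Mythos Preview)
Your proof is correct and essentially identical to the paper's: both expand $\sum_i a_i b_i$ via constraint (3), reindex over pairs $(R_1,R_2)$, use constraint (5) to bound the coefficient $|R_1\cap R_2|$ below by $1$ on nonzero terms, and finish with constraint (2). Your version makes the intermediate coefficient $|R_1\cap R_2|$ explicit, whereas the paper passes directly from the triple sum to $\sum_{R_1\cap R_2\neq\emptyset} a(R_1)b(R_2)$, but the argument is the same.
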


\begin{proof}

\noindent
By constraint (3), we have

\begin{align*}
\sum_{i \in [r]} a_ib_i &= \sum_{i \in [r]} \left( \sum_{R \subseteq [r], \ i \in R} a(R) \right) \left( \sum_{R \subseteq [r], \ i \in R} b(R) \right) \\
&= \sum_{i \in [r]} \left( \sum_{R_1, R_2 \subseteq [r], \ i \in R_1 \cap R_2} a(R_1)a(R_2) \right).
\end{align*}

\noindent
By constraint (5), we have

\begin{align*}
\sum_{i \in [r]} \left( \sum_{R_1, R_2 \subseteq [r], \ i \in R_1 \cap R_2} a(R_1)a(R_2) \right) &\geq  \sum_{R_1, R_2 \subseteq [r] , \ R_1 \cap R_2 \neq \{\}} a(R_1) \cdot b(R_2) \\
&= \sum_{R_1, R_2 \subseteq [r]} a(R_1) \cdot b(R_2).
\end{align*}

\noindent
By constraint (2), we have

$$\sum_{R_1, R_2 \subseteq [r]} a(R_1) \cdot b(R_2) = \left( \sum_{R \subseteq [r]} a(R) \right) \left( \sum_{R \subseteq [r]} b(R) \right)  = 1$$

\noindent
and thus $\sum_{i \in [r]} a_ib_i \geq 1$ as required.
\end{proof}

Although we did not define coloring squares for a solution to the coloring problem, one can imagine something similar. Recall the proof of Theorem 2 where we constructed a coloring $f$ from a solution to the coloring problem. Similarly, we can construct a coloring $ \chi: [0,1]^2 \rightarrow [r]$ by partitioning the $x$-axis into $[0,1] = \bigsqcup_{R \subseteq [r]} A_R$ where $\mu (A_R) = a(R)$ and partitioning the $y$-axis into $[0,1] = \bigsqcup_{R \subseteq [r]} B_R$ where $\mu(B_R) = b(R)$ where $\mu$ is the Lebesgue measure and $A_R,B_R$ are intervals, which is possible because of constraints (1) and (2). Then for $x \in A_{R_1}$ and $y \in B_{R_2}$, we define $\chi (x,y) = i$ where $i \in R_1 \cap R_2$, which is possible since $R_1 \cap R_2 \neq \{\}$ because of constraint (5). So for $i \in [r]$, we note $a_ib_i$ is the area of the region that \textit{can} be covered by color $i$ (emphasis on \textit{can} because in constructing $\chi$ we had some choice over the colors in $R_1 \cap R_2$). For $\chi$ to be well-defined, everything in $[0,1]^2$ must be colored, and hence the total area of all the colors $\sum_{i \in [r]} a_ib_i$ must at least be the area of $[0,1]^2$, which is just $1$, and so  $\sum_{i \in [r]} a_ib_i \geq 1$. We will use Lemma 1 in proofs without citing it. \\

Also, given a coloring $\chi : [0,1]^2 \rightarrow [r]$ and projections $\pi_1,\pi_2 : [0,1]^2 \rightarrow [0,1]$ defined by $\pi_1(x,y) = x$ and $\pi_2(x,y) = y$, if $\{x \in [0,1] : \chi(\pi_1^{-1}(x)) = R\}$ and $\{y \in [0,1] : \chi(\pi_2^{-1}(y)) = R\}$ are  Lebesgue measurable for all $R \subseteq [r]$, define

$$a(R) = \mu(\{x \in [0,1] : \chi(\pi_1^{-1}(x)) = R\})$$

\noindent
and

$$b(R) = \mu(\{y \in [0,1] : \chi(\pi_2^{-1}(y)) = R\}).$$

\noindent
 By the properties of the Lebesgue measure, $a(R)$ and $b(R)$ satisfy the coloring constraints. In providing upper bounds for $g^*(r)$, in many cases it will be more convenient to describe a coloring of the unit square $\chi : [0,1]^2 \rightarrow [r]$ instead of specifying $a(R),b(R)$ for all $R \subseteq [r]$.  \\

Now we are in a position to prove $g^*(t^2) = \frac{2}{t}$. We first show a construction that exhbits $g^*(3^2) = g^*(9) \leq \frac{2}{3}$.

\begin{center}
\begin{tikzpicture}[declare function ={a=4;}]

\draw (0,0) coordinate (A) ;
\draw (a/3,0) coordinate (B) ;
\draw (2*a/3,0) coordinate (C) ;
\draw (a,0) coordinate (D) ;
\draw (0,a/3) coordinate (E) ;
\draw (a/3,a/3) coordinate (F) ;
\draw (2*a/3,a/3) coordinate (G) ;
\draw (a,a/3) coordinate (H) ;
\draw (0,2*a/3) coordinate (I) ;
\draw (a/3,2*a/3) coordinate (J) ;
\draw (2*a/3,2*a/3) coordinate (K) ;
\draw (a,2*a/3) coordinate (L) ;
\draw (0,a) coordinate (M) ;
\draw (a/3,a) coordinate (N) ;
\draw (2*a/3,a) coordinate (O) ;
\draw (a,a) coordinate (P) ;

\draw (A) -- (B) ;
\draw (B) -- (C) ;
\draw (C) -- (D) ;
\draw (E) -- (F) ;
\draw (F) -- (G) ;
\draw (G) -- (H) ;
\draw (I) -- (J) ;
\draw (J) -- (K) ;
\draw (K) -- (L) ;
\draw (M) -- (N) ;
\draw (N) -- (O) ;
\draw (O) -- (P) ;
\draw (A) -- (E) ;
\draw (B) -- (F) ;
\draw (C) -- (G) ;
\draw (D) -- (H) ;
\draw (E) -- (I) ;
\draw (F) -- (J) ;
\draw (G) -- (K) ;
\draw (H) -- (L) ;
\draw (I) -- (M) ;
\draw (J) -- (N) ;
\draw (K) -- (O) ;
\draw (L) -- (P) ;

\draw (a/6,a/6) node{\Large 7} ;
\draw (a/2,a/6) node{\Large 8} ;
\draw (5*a/6,a/6) node{\Large 9} ;
\draw (a/6,a/2) node{\Large 4} ;
\draw (a/2,a/2) node{\Large 5} ;
\draw (5*a/6,a/2) node{\Large 6} ;
\draw (a/6, 5*a/6) node{\Large 1} ;
\draw (a/2,5*a/6) node{\Large 2} ;
\draw (5*a/6, 5*a/6) node{\Large 3} ;

\draw[decorate, decoration={brace,raise=3pt,amplitude=4pt}] (A) -- (E) node[midway,xshift=-0.6cm]{\Large $\frac{1}{3}$}; 
\draw[decorate, decoration={brace,raise=3pt,amplitude=4pt}] (E) -- (I) node[midway,xshift=-0.6cm]{\Large $\frac{1}{3}$}; 
\draw[decorate, decoration={brace,raise=3pt,amplitude=4pt}] (I) -- (M) node[midway,xshift=-0.6cm]{\Large $\frac{1}{3}$}; 

\draw[decorate, decoration={brace,raise=3pt,amplitude=4pt,mirror}] (A) -- (B) node[midway,yshift=-0.6cm]{\Large $\frac{1}{3}$}; 
\draw[decorate, decoration={brace,raise=3pt,amplitude=4pt,mirror}] (B) -- (C) node[midway,yshift=-0.6cm]{\Large $\frac{1}{3}$}; 
\draw[decorate, decoration={brace,raise=3pt,amplitude=4pt,mirror}] (C) -- (D) node[midway,yshift=-0.6cm]{\Large $\frac{1}{3}$}; 

\draw (a/2, -2) node{\normalsize Figure 4} ;
\end{tikzpicture}
\end{center}

\noindent
This corresponds to the solution where $a(\{1,4,7\}) = a(\{2,5,8\}) = a(\{3,6,9\}) = b(\{1,2,3\}) = b(\{4,5,6\}) = b(\{7,8,9\}) = \frac{1}{3}$ and $a(R) = b(R) = 0$ for all other $R \subseteq [9]$. One can check $a_i + b_i = \frac{2}{3}$ for all $i \in [9]$. 

\squarelower*

\begin{proof}
Consider any solution $a,b : \mathcal{P}([r]) \rightarrow [0,1]$ satisfying constraints (1), (2), (3), and (5). Since $\sum_{i \in [r]} a_ib_i \geq 1$, for some $i \in [r]$ we have $a_ib_i \geq \frac{1}{r}$. This implies $a_i+b_i \geq \frac{2}{\sqrt{r}}$ by the AM-GM inequality, as required. 
\end{proof}

\square*

\begin{proof}
To see that $g^*(t^2) \leq \frac{2}{t}$, consider analogous constructions shown in Figure $4$, where we have $t^2$ squares arranged in a $t$ by $t$ grid, and each square gets a different color. Formally, for all $i \in [t]$ we have 

$$a(\{j \cdot r + i \in [t^2] : j \in \{0\} \cup [t-1]\}) = \frac{1}{t}$$ 

\noindent 
and for all $i \in \{0\} \cup [t-1]$

$$b(\{ i \cdot t + j \in [t^2] : j \in [t]\}) = \frac{1}{t}.$$

\noindent
And finally, $a(R) = b(R) = 0$ for other $R \subseteq [t^2]$. One can check that $a_i = b_i = \frac{2}{t}$ for all $i \in [t^2]$. \\

To see that $g^*(t^2) \geq \frac{2}{t}$, see Theorem 4. 
\end{proof}

\begin{center}
\section{$g^*(r)$ for Small Values of $r$}
\end{center}

In this Section, we determine the values of $g^*(r)$ for $2 \leq r \leq 8$. Although we have a program that can compute $g^*(r)$ for any value of $r$, it becomes intractable when $r \geq 4$, as recall it involves $2^{2^r} \times 2^{2^r}$ linear programs. This Section demonstrates techniques that can be used to determine values of $g^*(r)$ without having to run the program described in Section 3. Interspersed between determinations of $g^*(r)$ for various values of $r$, we will have lemmas which future determinations of $g^*(r)$ requires. In particular, a lemma will always directly precede a determination of $g^*(r)$ that requires it. \\

\begin{proposition}
We have $g^*(2) = \frac{3}{2}$.
\end{proposition}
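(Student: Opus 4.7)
The plan is to prove both bounds directly from the coloring problem with only two colors, since the subset algebra is small enough to enumerate by hand and since Theorem 4 only yields $g^*(2) \geq \sqrt{2}$, which is too weak.

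For the upper bound $g^*(2) \leq \frac{3}{2}$, I will exhibit the solution corresponding to Figure 3: set $a(\{1\}) = a(\{2\}) = \tfrac{1}{2}$ and $b(\{1,2\}) = 1$, with $a(R)$ and $b(R)$ equal to $0$ on all remaining subsets of $[2]$. One checks directly that constraints (1)–(3) hold, and constraint (5) is vacuous because every nonzero $a(R)$ intersects the unique nonzero $b(R) = b(\{1,2\})$. Then $a_1 = a_2 = \tfrac{1}{2}$ and $b_1 = b_2 = 1$, so $a_i + b_i = \tfrac{3}{2}$ for each $i \in [2]$.

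For the lower bound $g^*(2) \geq \frac{3}{2}$, I will do a case analysis on constraint (5). The only pairs $(R_1,R_2)$ of nonempty subsets of $[2]$ with $R_1 \cap R_2 = \emptyset$ are $(\{1\},\{2\})$ and $(\{2\},\{1\})$, so constraint (5) reduces to the two disjunctions $a(\{1\}) = 0$ or $b(\{2\}) = 0$, and $a(\{2\}) = 0$ or $b(\{1\}) = 0$. This yields four cases. In the case $b(\{1\}) = b(\{2\}) = 0$, constraint (2) forces $b(\{1,2\}) = 1$, hence $b_1 = b_2 = 1$, and since $a_1 + a_2 = a(\{1\}) + a(\{2\}) + 2a(\{1,2\}) \geq \sum_R a(R) = 1$, we get $\max_i(a_i + b_i) \geq 1 + \tfrac{1}{2}$. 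The case $a(\{1\}) = a(\{2\}) = 0$ is symmetric. In the remaining mixed cases, say $a(\{1\}) = b(\{1\}) = 0$, every column and every row contains color $2$, so $a_2 = b_2 = 1$ and already $a_2 + b_2 = 2 \geq \tfrac{3}{2}$; the last case is symmetric.

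Both steps are routine. The only subtlety worth calling out is that Lemma 1 alone gives $a_1 b_1 + a_2 b_2 \geq 1$, and combined with $a_i + b_i \leq m$ via AM-GM this only yields $m \geq \sqrt{2}$, so one genuinely has to use constraint (5) and not just Lemma 1 to sharpen the bound from $\sqrt{2}$ to $\tfrac{3}{2}$.
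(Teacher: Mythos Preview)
Your proof is correct and follows essentially the same approach as the paper: both arguments give the identical upper-bound construction and establish the lower bound by a case analysis on which singleton values $a(\{i\}), b(\{i\})$ vanish under constraint~(5). The only difference is organizational: you split directly into the four cases coming from the two disjunctions, whereas the paper first assumes without loss of generality that $a_1 + b_1 \le \tfrac{3}{2}$ and then branches on whether $a(\{2\})$ and $b(\{2\})$ are positive, but the underlying content is the same.
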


\begin{proof}
To see that $g^*(2) \leq \frac{3}{2}$, consider \\

\begin{center}
\begin{tikzpicture}[declare function={a=4;}]

\draw (0,0) coordinate (A) ;
\draw (a/2,0) coordinate (B) ;
\draw (a,0) coordinate (C) ;
\draw (0,a) coordinate (D) ;
\draw (a/2,a) coordinate (E) ;
\draw (a,a) coordinate (F) ;

\draw (A) -- (B) ;
\draw (B) -- (C) ;
\draw (A) -- (D) ;
\draw (D) -- (E) ;
\draw (E) -- (F) ;
\draw (C) -- (F) ;
\draw (B) -- (E) ;

\draw (a/4,a/2) node{\huge 1} ;
\draw (3*a/4,a/2) node{\huge 2} ;

\draw[decorate, decoration={brace,raise=3pt,amplitude=4pt}] (A) -- (D) node[midway,xshift=-0.6cm]{\Large 1};
\draw[decorate, decoration={brace,mirror,raise=3pt,amplitude=4pt}] (A) -- (B) node[midway,yshift=-0.6cm]{\Large $\frac{1}{2}$} ;
\draw[decorate, decoration={brace,mirror,raise=3pt,amplitude=4pt}] (B) -- (C) node[midway,yshift=-0.6cm]{\Large $\frac{1}{2}$} ;
\end{tikzpicture}
\end{center}

Now we show $g^*(2) \geq \frac{3}{2}$. Consider any solution $a,b : \mathcal{P}([2]) \rightarrow [0,1]$ satisfying constraints (1), (2), (3), and (5). Without loss of generality, assume $a_1+b_1 \leq \frac{3}{2}$. We want to show $a_2+b_2 \geq \frac{3}{2}$. Note that $1-a_1 = a(\{2\})$ and $1-b_1 = b(\{2\})$ by constraint (5). Since $a_1+b_1 \leq \frac{3}{2}$, we have $a(\{2\}) + b(\{2\}) \geq \frac{1}{2}$. We consider two cases. \\

\textbf{Case 1}: $a(\{2\}),b(\{2\}) > 0$. Note $a(\{2\}) > 0$ implies that $b_2 = 1$ by constraint (5), and $b(\{2\}) > 0$ implies $a_2 = 1$ by constraint (5). Thus $a_2 + b_2 = 2 \geq \frac{3}{2}$. \\

\textbf{Case 2}: Without loss of generality assume $a(\{2\}) = 0$. Thus $b(\{2\}) \geq \frac{1}{2}$, which implies $a_2 = 1$. Thus $a_2 + b_2 \geq a_2 + b(\{2\}) \geq \frac{3}{2}$, as required. 
\end{proof}

\begin{lemma}
 For any solution $a,b : \mathcal{P}([r]) \rightarrow [0,1]$ satisfying constraints (1), (2), (3), and (5), we must have 
 
 $$\sum_{i \in [r]} a_i = \sum_{R \subseteq [r]} |R| \cdot a(R)$$
 $$\sum_{i \in [r]} b_i = \sum_{R \subseteq [r]} |R| \cdot b(R).$$
\end{lemma}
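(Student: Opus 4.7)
The plan is to prove this by a routine double-counting / interchange-of-summation argument, using only constraint (3). The identity says that when we add up the proportions $a_i$ across colors, each set $R$ contributes its measure $a(R)$ once for every color it contains, which is $|R|$ times total. This is the discrete analogue of Fubini applied to the indicator of the relation ``$i \in R$''.

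Concretely, I would start from the definition in constraint (3):
$$\sum_{i \in [r]} a_i = \sum_{i \in [r]} \sum_{R \subseteq [r],\, i \in R} a(R).$$
Then I would swap the order of summation. The set of pairs $(i,R)$ with $i \in [r]$, $R \subseteq [r]$, and $i \in R$ can equivalently be enumerated by first fixing $R \subseteq [r]$ and then letting $i$ range over $R$, giving
$$\sum_{i \in [r]} \sum_{R \subseteq [r],\, i \in R} a(R) = \sum_{R \subseteq [r]} \sum_{i \in R} a(R) = \sum_{R \subseteq [r]} |R| \cdot a(R).$$
Combining the two displays yields the first equation. The second equation is obtained by the identical argument, replacing $a$ with $b$ throughout and invoking the second half of constraint (3).

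There is no real obstacle here: the swap of summation is finite (so unconditional), constraints (1), (2), and (5) play no role, and the argument uses only the defining relationship between $a_i$ and $a(R)$. The only thing worth being slightly careful about is writing the reindexing cleanly, but that is a one-line manipulation.
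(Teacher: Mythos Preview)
Your proof is correct and is essentially identical to the paper's own argument: the paper also substitutes constraint (3), swaps the order of summation over pairs $(i,R)$ with $i\in R$, and collapses the inner sum to $|R|\cdot a(R)$, then notes the $b$ case is analogous. Your observation that constraints (1), (2), and (5) are not actually needed is also accurate.
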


\begin{proof}
Observe that

$$\sum_{i \in [r]} a_i = \sum_{i \in [r]} \sum_{R \subseteq [r], i \in R} a(R) = \sum_{R \subseteq [r]} \sum_{i \in R} a(R) = \sum_{R \subseteq [r]} |R| \cdot a(R).$$

\noindent
The proof for $\sum_{i \in [r]} b_i = \sum_{R \subseteq [r]} |R| \cdot b(R)$ is analogous.
\end{proof}

A corollary of the previous lemma is that 

$$\sum_{i \in [r]} a_i + b_i = \sum_{R \subseteq [r]} |R| (a(R)+b(R)).$$

\noindent
In future proofs we will use this lemma and its corollary without citing it. \\

We make a couple of definitions that will be used in the proofs of the next propositions. We define the functions $a',b' : \mathcal{P}(\mathcal{P}([r])) \rightarrow [0,1]$.

\begin{definition}
For $P \subseteq \mathcal{P}([r])$, define $a'(P) = \sum_{R \in P} a(R)$ and $b'(P) = \sum_{R \in P} b(R)$.
\end{definition}

\begin{definition}
We define the upward closure $R^{\uparrow}$ of $R \subseteq [r]$ to be $R^{\uparrow} = \{T \in \mathcal{P}([r]) : T \supseteq R \}$.
\end{definition}

\noindent
Note that for $i \in [r]$ we have

$$a'(\{i\}^{\uparrow}) = \sum_{R \in \{i\}^{\uparrow}} a(R) = \sum_{R \subseteq [r], \ i \in R} a(R) = a_i.$$

\noindent
Likewise, $b'(\{i\}^{\uparrow}) = b_i$. \\

\begin{proposition}
We have $g^*(3) = \frac{5}{4}$.
\end{proposition}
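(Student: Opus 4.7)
My plan is to prove the bounds $g^*(3) \leq \frac{5}{4}$ and $g^*(3) \geq \frac{5}{4}$ separately. The upper bound comes from an explicit feasible solution to the coloring problem, and the lower bound comes from a short case analysis organized around whether any singleton set appears in the support of $a$ or $b$. The tools available are the coloring constraints (1)--(5), Lemma 1 ($\sum_{i} a_i b_i \geq 1$), and the corollary of Lemma 2 ($\sum_i (a_i + b_i) = \sum_R |R|(a(R) + b(R))$).

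For the upper bound, I will exhibit the solution $a(\{1,2\}) = a(\{1,3\}) = \frac{1}{2}$, $b(\{1\}) = \frac{1}{4}$, $b(\{2,3\}) = \frac{3}{4}$, with all other $a(R), b(R)$ equal to $0$. A direct check confirms that every nonzero column-type and row-type intersect (so constraint (5) holds), and computing the marginals gives $a_1 = 1$, $a_2 = a_3 = \frac{1}{2}$, $b_1 = \frac{1}{4}$, $b_2 = b_3 = \frac{3}{4}$, so $a_i + b_i = \frac{5}{4}$ for every $i \in [3]$.

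For the lower bound, I will suppose $a_i + b_i < \frac{5}{4}$ for every $i$ and derive a contradiction. If no singleton appears in either support, then every $R$ with $a(R) > 0$ or $b(R) > 0$ has $|R| \geq 2$, so the corollary of Lemma 2 forces $\sum_i (a_i + b_i) \geq 4$ and hence $\max_i (a_i + b_i) \geq \frac{4}{3}$. Otherwise, after swapping the roles of $a$ and $b$ and relabeling colors I may assume $b(\{1\}) > 0$; constraint (5) forces $a(R) = 0$ whenever $1 \notin R$, so $a_1 = 1$ and thus $b_1 < \frac{1}{4}$, and also $a(\{1\}) = 0$ (else a symmetric argument would give $b_1 = 1$). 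I then split on which of $a(\{1,2\})$, $a(\{1,3\})$, $a(\{1,2,3\})$ are nonzero. The decisive branch is when both $a(\{1,2\}), a(\{1,3\}) > 0$: constraint (5) restricts the $b$-support to $\{R : 1 \in R \text{ or } \{2,3\} \subseteq R\}$, and $b_1 < \frac{1}{4}$ forces $b(\{2,3\}) > \frac{3}{4}$, so $b_2, b_3 > \frac{3}{4}$; combined with $a_2 + a_3 \geq 1$ this gives $(a_2 + b_2) + (a_3 + b_3) > \frac{5}{2}$, contradicting the hypothesis. The residual subcases (only $a(\{1,2,3\})$ nonzero, or exactly one of $a(\{1,2\}), a(\{1,3\})$ nonzero together with possibly $a(\{1,2,3\})$) close either via $\sum_i b_i \geq 1$, or via Lemma 1 combined with the AM-GM bound $a_i b_i \leq (a_i + b_i)^2 / 4 < \frac{25}{64} < \frac{1}{2}$.

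The main obstacle is keeping the lower-bound case split clean without losing a configuration: constraint (5) creates a combinatorial coupling between the supports of $a$ and $b$, and it is tempting to miss the branch in which $a$ is supported on a single $\{1, i\}$ together with $\{1,2,3\}$. That branch is precisely where Lemma 1 becomes essential, since the linear bounds are too weak when $a_3$ can be made small but nonzero.
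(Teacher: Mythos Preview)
Your proof is correct. Both bounds are sound: the upper-bound construction is the paper's construction with the roles of $a$ and $b$ swapped, and your lower-bound case analysis closes every branch.

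Your lower-bound argument is organized differently from the paper's, however. After isolating a singleton, the paper assumes $a(\{1\})>0$, deduces $b_1=1$ and $a_1\le\frac14$, and then treats the remaining problem as a two-color question on a $(1-a_1)\times 1$ rectangle in colors $2,3$; its three subcases are according to which of $a(\{3\})$ and $b'(\{3\}^{\uparrow}\setminus\{2\}^{\uparrow})$ vanish, mirroring the proof of $g^*(2)=\frac32$, and Lemma~1 is never invoked. You instead swap sides, assume $b(\{1\})>0$, pin down $a_1=1$ and $a(\{1\})=0$, and split directly on which of $\{1,2\},\{1,3\},\{1,2,3\}$ lie in the support of $a$. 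Your decisive branch (both $a(\{1,2\}),a(\{1,3\})>0$) is handled by a clean counting inequality $(a_2+b_2)+(a_3+b_3)>\frac52$, and the residual branch where only one of $a(\{1,2\}),a(\{1,3\})$ is present uses Lemma~1 together with AM--GM (since then $a_1=a_2=1$ forces $a_1b_1+a_2b_2<\frac12$, while $a_3b_3<\frac{25}{64}<\frac12$). The paper's route has the conceptual payoff of reducing to the already-solved $r=2$ case; yours is a more direct structural split on the $a$-support and illustrates how the area bound (Lemma~1) can substitute for that reduction.
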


\begin{proof}
To see that $g^*(3) \leq \frac{5}{4}$, consider \\

\begin{center}
\begin{tikzpicture}[declare function={a=4;}]

\draw (0,0) coordinate (A) ;
\draw (a/4,0) coordinate (B) ;
\draw (a,0) coordinate (C) ;
\draw (0,a) coordinate (D) ;
\draw (a/4,a) coordinate (E) ;
\draw (a,a) coordinate (F) ;
\draw (a/4,a/2) coordinate (G) ;
\draw (a,a/2) coordinate (H) ;

\draw (A) -- (B) ;
\draw (B) -- (C) ;
\draw (D) -- (E) ;
\draw (E) -- (F) ;
\draw (A) -- (D) ;
\draw (C) -- (F) ;
\draw (G) -- (H) ;
\draw (B) -- (E) ;

\draw (a/8,a/2) node{\Large 1} ;
\draw (5*a/8,3*a/4) node{\Large 2} ;
\draw (5*a/8,a/4) node{\Large 3} ;

\draw[decorate, decoration={brace,raise=3pt,amplitude=4pt}] (A) -- (D) node[midway,xshift=-0.6cm]{\large 1};
\draw[decorate, decoration={brace,raise=3pt,amplitude=4pt,mirror}] (A) -- (B) node[midway,yshift=-0.6cm]{\Large $\frac{1}{4}$} ;
\draw[decorate, decoration={brace,raise=3pt,amplitude=4pt,mirror}] (B) -- (C) node[midway,yshift=-0.6cm]{\Large $\frac{3}{4}$};
\draw[decorate, decoration={brace,raise=3pt,amplitude=4pt,mirror}] (C) -- (H) node[midway,xshift=0.6cm]{\Large $\frac{1}{2}$};
\draw[decorate, decoration={brace,raise=3pt,amplitude=4pt,mirror}] (H) -- (F) node[midway,xshift=0.6cm]{\Large $\frac{1}{2}$};

\end{tikzpicture}
\end{center}

Now we show $g^*(3) \geq \frac{5}{4}$. Consider any solution $a,b : \mathcal{P}([3]) \rightarrow [0,1]$ satisfying constraints (1), (2), (3), and (5). We consider two cases. \\

\textbf{Case 1}: For $R \subseteq [3]$, if $|R| \leq 1$, then $a(R) = b(R) = 0$. In other words if $a(R) > 0$ then $|R| \geq 2$ and if $b(R) > 0$ then $|R| \geq 2$. We note 

$$\sum_{i \in [3]} a_i + b_i = \sum_{R \subseteq [3]} |R|(a(R) + b(R)) \geq \sum_{R \subseteq [3]} 2(a(R) + b(R)) = 4$$

\noindent
where the last equality comes from constraint (2). Thus for some $i \in [3]$ we have $a_i + b_i \geq \frac{4}{3} \geq \frac{5}{4}$, and so we are done with this case. \\

\textbf{Case 2}: Without loss of generality assume $a(\{1\}) > 0$. This implies $b_1 = 1$ by constraint (5). Since we may assume $a_1 + b_1 \leq \frac{5}{4}$, we have that $a_1 \leq \frac{1}{4}$. This gives us

$$\sum_{R \subseteq \{2,3\}} a(R) = 1-a_1 \geq \frac{3}{4}$$

\noindent 
and

$$\sum_{R \in \{2\}^{\uparrow} \cup \{3\}^{\uparrow}} b(R) = 1.$$

Assume $a_2 + b_2 \leq \frac{5}{4}$. This implies then that 

$$a(\{3\}) + b'(\{3\}^{\uparrow} \setminus \{2\}^{\uparrow}) \geq \frac{3}{4} + 1 - \frac{5}{4} = \frac{1}{2}.$$

\noindent
We consider three subcases. One should visualize the rest of the proof as finding an optimal coloring of a $1-a_1$ by $1$ rectangle using the two colors $2,3$. We use an argument similar to the proof used in showing $g^*(2) = \frac{3}{2}$. \\

\textbf{Case 2.1}: $a(\{3\}), b'(\{3\}^{\uparrow} \setminus \{2\}^{\uparrow}) > 0$. Note $a(\{3\}) > 0$ implies that $b_3 = 1$ and  $b'(\{3\}^{\uparrow} \setminus \{2\}^{\uparrow}) > 0$ implies that $a_3 \geq 1 - a_1 \geq \frac{3}{4}$ and so $a_3+b_3 \geq 1 + \frac{3}{4} \geq \frac{5}{4}$. \\

\textbf{Case 2.2}: $a(\{3\}) = 0$. In this case we have $b'(\{3\}^{\uparrow} \setminus \{2\}^{\uparrow}) \geq \frac{1}{2}$. Note that this also implies $a_3 \geq 1-a_1 \geq \frac{3}{4}$ and thus $a_3 + b_3 \geq \frac{3}{4} + \frac{1}{2} = \frac{5}{4}$. \\

\textbf{Case 2.3}: $b'(\{3\}^{\uparrow} \setminus \{2\}^{\uparrow}) = 0$. In this case we have that $a(\{3\}) \geq \frac{1}{2}$, and thus $b_3 = 1$. So we have $a_3 + b_3 \geq \frac{1}{2} + 1 \geq \frac{5}{4}$, as required. 
\end{proof}

Note that if one fixes the sum of the side lengths of a rectangle $a_i + b_i = k$, to maximize its area one wants the rectangle to be a square with side lengths $a_i = b_i = \frac{k}{2}$. Conversely, the less like a square the rectangle is, i.e. $|a_i-b_i|$ is large, the less area it covers. Some conditions force color $i$ to occupy less area by forcing the value of $|a_i-b_i|$ to be large. Future lemmas will bound the sum of areas $\sum_{i \in S} a_ib_i$ some subset of colors $S \subseteq [r]$ can occupy given certain conditions on $a_i,b_i$. If the area they occupy is too small, then we can't satisfy $\sum_{i \in [r]} a_ib_i \geq 1$, and thus those certain conditions can't hold. These type of arguments put restrictions on what an optimal solution can look like. \\

\begin{lemma}
Let $c_i, d_i \in [0,1]$ where $i \in [s]$, $s \in \mathbb{Z}^{+}$, and $k \in (0,1]$. If $\sum_{i \in [s]} c_i \geq 1$ and $c_i + d_i \leq k \leq \frac{2}{s}$ for all $i \in [s]$, then $\sum_{i \in [s]} c_id_i \leq k - \frac{1}{s}$  
\end{lemma}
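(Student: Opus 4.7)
The plan is to first observe that to upper bound $\sum_{i \in [s]} c_i d_i$, I may assume without loss of generality that $d_i = k - c_i$ for every $i$. Indeed, fixing the $c_i$'s and enlarging any $d_i$ up to its allowed bound $k - c_i$ can only increase the sum $\sum c_i d_i$; moreover $k - c_i \in [0,1]$ holds automatically since $c_i \leq k \leq 1$. So the problem reduces to maximizing
$$\sum_{i \in [s]} c_i(k - c_i) = k \sum_{i \in [s]} c_i - \sum_{i \in [s]} c_i^2$$
over $c_i \in [0,k]$ satisfying $\sum_i c_i \geq 1$.

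Next I would write $C = \sum_{i \in [s]} c_i$ and invoke the Cauchy--Schwarz (or power-mean) inequality to get $\sum_i c_i^2 \geq C^2/s$. This yields
$$\sum_{i \in [s]} c_i d_i \;\leq\; kC - \frac{C^2}{s} \;=:\; f(C).$$
Since each $c_i \leq k$, we also have $C \leq sk$, so $f$ is being evaluated on the interval $[1, sk]$ (using the hypothesis $C \geq 1$).

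The crucial final step is to use the hypothesis $k \leq 2/s$: the quadratic $f(C) = kC - C^2/s$ has its vertex at $C = sk/2$, and $k \leq 2/s$ is exactly the condition that places this vertex at $sk/2 \leq 1$. Hence $f$ is monotone decreasing on $[1, sk]$, so its maximum on this interval is attained at $C = 1$, giving
$$\sum_{i \in [s]} c_i d_i \;\leq\; f(1) \;=\; k - \frac{1}{s},$$
as required. I do not expect any step to be a serious obstacle; the one piece worth flagging is that the hypothesis $k \leq 2/s$ is not used until the very end and plays exactly the role of putting the vertex of $f$ to the left of the feasible region, which is what makes the bound tight at $C = 1$.
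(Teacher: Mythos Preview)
Your proof is correct. Both you and the paper begin with the same reduction to $d_i = k - c_i$, but from there the arguments diverge. The paper first argues that one may assume $\sum_i c_i = 1$ (if the sum exceeds $1$, some $c_j > 1/s \geq k/2$, and decreasing it increases $c_j(k-c_j)$), and then uses Lagrange multipliers to maximize $k - \sum_i c_i^2$ subject to $\sum_i c_i = 1$, obtaining the symmetric point $c_i = 1/s$. You instead keep $C = \sum_i c_i$ as a free parameter, invoke Cauchy--Schwarz to get $\sum_i c_i^2 \geq C^2/s$, and then optimize the one-variable quadratic $f(C) = kC - C^2/s$ over $C \geq 1$, using $k \leq 2/s$ to place the vertex at $sk/2 \leq 1$. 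Your route is more elementary (no Lagrange multipliers, no separate reduction to equality in the sum constraint) and makes the role of the hypothesis $k \leq 2/s$ especially transparent; the paper's approach, on the other hand, identifies the extremal configuration explicitly. Both are valid and of comparable length.
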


\begin{proof}
We will find an upper bound on the maximum of $\sum_{i \in [s]} c_id_i$ under the constraints above; the maximum of $\sum_{i \in [s]} c_id_i$ exists because the constrained space is compact and $\sum_{i \in [s]} c_id_i$ is continuous with respect to $c_i,d_i$. Note we may assume $c_i+d_i = k$ for all $i \in [s]$, since if $c_j + d_j < k$ for some $j \in [s]$ we can increase both of $c_j,d_j$ by a small amount, which will give us a larger value of $\sum_{i \in [s]} c_id_i$. Thus we now just need to find the maximum of $\sum_{i \in [s]} c_i(k-c_i)$ under the given constraints. \\

We now show we may also assume that $\sum_{i \in [s]} c_i = 1$. If we had $\sum_{i \in [s]} c_i > 1$, then for some $j \in [s]$ we have $c_j > \frac{1}{s} \geq \frac{k}{2}$. We may decrease $c_j$ by a small amount so that $c_j(k-c_j)$ increases which then would increase $f(c_1,c_2,\ldots,c_s) = \sum_{i \in [s]} c_i(k-c_i)$; the function $g(x) = x(k-x)$ has derivative $g'(x) = k - 2x$ and thus achieves a maximum at $\frac{k}{2}$, and furthermore is concave as $g''(x) = -2$, so moving $x$ closer to $\frac{k}{2}$ always increases $x(k-x)$. Under the constraint $\sum_{i \in [s]} c_i = 1$ the quantity we're trying to maximize $\sum_{i \in [s]} c_i(k-c_i)$ becomes 

$$\sum_{i \in [s]} c_i(k-c_i)= k \left( \sum_{i \in [s]} c_i \right) - \sum_{i \in [s]} c_i^2 = k - \sum_{i \in [s]} c_i^2.$$

We use Lagrange multipliers to find the maximum of  $f(c_1,c_2,\ldots,c_s) = k - \sum_{i \in [s]} c_i^2$ under the constraint $g_1(c_1,c_2,\ldots,c_s) = \sum_{i \in [s]} c_i = 1$. But first we argue that the maximum exists. Note we may assume that $c_i \geq 0$ for all $i \in [s]$, since $f(c_1,c_2,\ldots,c_s) = f(|c_1|,|c_2|,\ldots,|c_s|)$. The assumption $c_i \geq 0$ for all $i \in [s]$ along with the constraint $\sum_{i \in [s]} c_i = 1$ gives rise to a compact space, and since $f$ is continuous we know $f$ has a maximum in this compact space, and thus has a maximum without the assumption $c_i \geq 0$ for all $i \in [s]$. So $f(c_1,c_2,\ldots,c_s) = k - \sum_{i \in [s]} c_i^2$ has a maximum under the constraint $g_1(c_1,c_2,\ldots,c_s) = \sum_{i \in [s]} c_i = 1$. We now use Lagrange multipliers. \\

We note $\frac{\partial f}{\partial c_i} = - 2c_i$ for $i \in [s]$, and $\frac{\partial g_1}{\partial i} = 1$ for $i \in [s]$. Thus for some $\lambda \in \mathbb{R}$ we get

$$
\begin{pmatrix}
-2c_1 \\
-2c_2 \\
\vdots \\
-2c_s
\end{pmatrix} =
\lambda \begin{pmatrix}
1 \\
1 \\
\vdots \\
1
\end{pmatrix}
$$

\noindent
and so for all $i \in [s]$, we have $-2c_i = \lambda$. Thus we get for all $i,j \in [s]$, we have $-2c_i = -2c_j$, which implies $c_i = c_j$. Note

$$1 = \sum_{i \in [s]} c_i = \sum_{i \in [s]} c_1 = sc_1$$

\noindent
and so $c_1 = \frac{1}{s}$, which implies $c_i = \frac{1}{s}$ for all $i \in [s]$. Note 

$$f(\frac{1}{s},\frac{1}{s},\ldots,\frac{1}{s}) = \sum_{i \in [s]} \frac{1}{s}(k-\frac{1}{s}) = s \cdot \frac{1}{s}(k-\frac{1}{s}) = k-\frac{1}{s}$$

\noindent
as required. 
\end{proof}

\begin{lemma}
Let $c_i, d_i \in [0,1]$ where $i \in [s]$, $s \in \mathbb{Z}^{+}$, $k \in (0,1]$, and $A \in [0,1]$. If $c_i + d_i \leq k$ for all $i \in [s]$ and $\sum_{i \in [s]} c_id_i \geq A$, then we have 

$$s(\frac{k}{2} - \sqrt{\frac{k^2}{4}-\frac{A}{s}}) \leq \sum_{i \in [s]} \min\{c_i,d_i\} \leq \sum_{i \in [s]} \max\{c_i,d_i\} \leq s(\frac{k}{2} + \sqrt{\frac{k^2}{4}-\frac{A}{s}}).$$ 
\end{lemma}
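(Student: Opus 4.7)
The plan is to reduce the inequality to a per-index quadratic bound and then combine these bounds via Jensen's inequality applied to the concave function $\sqrt{\cdot}$.

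First, for each $i \in [s]$, I would let $m_i = \min\{c_i, d_i\}$ and $M_i = \max\{c_i, d_i\}$, so that $m_i + M_i = c_i + d_i \leq k$ and $m_i M_i = c_i d_i$. Using $M_i \leq k - m_i$, I get $c_i d_i = m_i M_i \leq m_i(k-m_i)$, i.e.\ $m_i^2 - k m_i + c_i d_i \leq 0$. Solving this quadratic in $m_i$ yields
$$\tfrac{k}{2} - \tfrac{1}{2}\sqrt{k^2 - 4 c_i d_i} \;\leq\; m_i \;\leq\; M_i \;\leq\; \tfrac{k}{2} + \tfrac{1}{2}\sqrt{k^2 - 4 c_i d_i}.$$
Here the square root is real because $c_i + d_i \leq k$ with $c_i, d_i \geq 0$ forces $c_i d_i \leq k^2/4$ by AM-GM.

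Next I would sum over $i \in [s]$ and control the resulting sum of square roots. The function $x \mapsto \sqrt{x}$ is concave on $[0,\infty)$, so Jensen gives
$$\sum_{i \in [s]} \sqrt{k^2 - 4 c_i d_i} \;\leq\; s \sqrt{k^2 - \tfrac{4}{s}\sum_{i \in [s]} c_i d_i} \;\leq\; s \sqrt{k^2 - \tfrac{4A}{s}},$$
where the second inequality uses $\sum_i c_i d_i \geq A$ (and requires $k^2 - 4A/s \geq 0$, which again follows from AM-GM summed over $i$ together with $A \leq \sum_i c_i d_i$). Plugging this into the summed per-index bounds gives
$$\sum_{i \in [s]} m_i \;\geq\; \tfrac{sk}{2} - \tfrac{s}{2}\sqrt{k^2 - \tfrac{4A}{s}} \;=\; s\Bigl(\tfrac{k}{2} - \sqrt{\tfrac{k^2}{4} - \tfrac{A}{s}}\Bigr),$$
and symmetrically
$$\sum_{i \in [s]} M_i \;\leq\; s\Bigl(\tfrac{k}{2} + \sqrt{\tfrac{k^2}{4} - \tfrac{A}{s}}\Bigr).$$
The middle inequality $\sum_i m_i \leq \sum_i M_i$ is immediate from $m_i \leq M_i$.

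I do not expect a real obstacle here; the main thing to be careful about is verifying the square roots are well-defined (handled by AM-GM) and pointing at the correct direction of Jensen (the upper bound on the sum of concave $\sqrt{\cdot}$ by its value at the average). Once the per-index quadratic bound is written down, the rest is just bookkeeping.
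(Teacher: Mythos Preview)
Your proof is correct, and it takes a genuinely different route from the paper. The paper argues by first normalising to $c_i+d_i=k$ and $\sum_i c_i(k-c_i)=A$, then uses Lagrange multipliers on $\sum_i c_i$ subject to this constraint (after a compactness argument) to conclude the extremum occurs when all $c_i$ are equal, which gives the stated bounds. Your argument instead extracts a per-index bound $m_i,\,M_i \in \bigl[\tfrac{k}{2}-\tfrac12\sqrt{k^2-4c_id_i},\,\tfrac{k}{2}+\tfrac12\sqrt{k^2-4c_id_i}\bigr]$ directly from the quadratic $x(k-x)\ge c_id_i$, then sums and applies Jensen for the concave square root to replace $\sum_i c_id_i$ by $A$. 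Your approach is shorter and more elementary (no multipliers, no compactness), while the paper's approach makes the equality case explicit as an extremiser of a constrained optimisation; both, of course, identify the same extremal configuration where all the $c_id_i$ coincide.
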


\begin{proof}
We first show $\sum_{i \in [s]} \max\{c_i,d_i\} \leq s(\frac{k}{2} + \sqrt{\frac{k^2}{4}-\frac{A}{s}})$. Without loss of generality assume $c_i = \max\{c_i,d_i\}$ for all $i \in [s]$, so we need to show $\sum_{i \in [s]} c_i \leq s(\frac{k}{2} + \sqrt{\frac{k^2}{4}-\frac{A}{s}})$. We find an upper bound on the maximum of $f(c_1,c_2,\ldots,c_s) = \sum_{i \in [s]} c_i$ under the above constraints; the maximum exists because the constrained space is compact and $f$ is continuous with respect to $c_i$ and $d_i$. First note we may replace the constraint $c_i + d_i \leq k$ with $c_i + d_i = k$ for all $i \in [s]$. If $c_i+d_i < k$, we can increase $c_i$ so that $f(c_1,c_2,\ldots,c_s) = \sum_{i \in [s]} c_i$ attains a larger value while still maintaining the constraints. Thus the constraint $\sum_{i \in [s]} c_id_i \geq A$ becomes $\sum_{i \in [s]} c_i(k-c_i) \geq A$. \\

Next, we show that we may assume $\sum_{i \in [s]} c_i(k-c_i) = A$. If $\sum_{i \in [s]} c_i(k-c_i) > A$, for any $i \in [s]$ we may increase $c_i$ so that $f(c_1,c_2,\ldots,c_s) = \sum_{i \in [s]} c_i$ attains a larger value while still maintaining the constraints, unless $c_i = k$ for all $i \in [s]$, in which case the inequality $\sum_{i \in [s]} c_i(k-c_i) > A$ becomes $0 > A$, contradicting $A \in [0,1]$. Note under the constraint $\sum_{i \in [s]} c_i(k-c_i) = A$, we have

$$\sum_{i \in [s]} c_i(k-c_i) = k(\sum_{i \in [s]} c_i) - \sum_{i \in [s]} c_i^2 = k \cdot f(c_1,c_2,\ldots,c_s) - \sum_{i \in [s]} c_i^2 = A$$

\noindent
so that $f(c_1,c_2,\ldots,c_s) = \frac{A}{k} + \frac{1}{k}(\sum_{i \in [s]} c_i^2)$. \\

We argue that under the constraint $\sum_{i \in [s]} c_i(k-c_i) = A$, the function $f(c_1,c_2,\ldots,c_s) = \sum_{i \in [s]} c_i = \frac{A}{k} + \frac{1}{k}(\sum_{i \in [s]} c_i^2)$ has a maximum. Note that

$$f(c_1,c_2,\ldots,c_s) = f(|c_1|,|c_2|,\ldots,|c_s|)$$

\noindent
so that we may assume $c_i \geq 0$ for all $i \in [s]$. The assumption $c_i \geq 0$ for all $i \in [s]$ along with the constraint $\sum_{i \in [s]} c_i(k-c_i) = A$ give rise to a compact space; since $c_i(k-c_i) \leq \frac{k^2}{4}$ for all $i \in [s]$, if for any $j \in [s]$ we have $c_j$ becomes large enough so that $c_j(k-c_j) < -(s-1)\frac{k^2}{4}$, then $\sum_{i \in [s]} c_i(k-c_i) < 0 \leq A$. Since $f$ is continuous, it has a maximum in the compact space defined by $c_i \geq 0$ for all $i \in [s]$ and $\sum_{i \in [s]} c_i(k-c_i) = A$, and thus has a maximum with only the constraint $\sum_{i \in [s]} c_i(k-c_i) = A$. \\

We now use Lagrange multipliers to find the maximum of  $f(c_1,c_2,\ldots,c_s) = \sum_{i \in [s]} c_i$ under the constraint $\sum_{i \in [s]} c_i(k-c_i) = A$. We note $\frac{\partial f}{\partial c_i} = 1$ for all $i \in [s]$. Define $g(c_1,c_2,\ldots,c_s) = \sum_{i \in [s]} c_i(k-c_i)$, and so $\frac{\partial g}{\partial c_i} = k - 2c_i$ for all $i \in [s]$. Thus for some $\lambda \in \mathbb{R}$ we have

$$
\begin{pmatrix}
1 \\
1 \\
\vdots \\
1
\end{pmatrix} = 
\lambda 
\begin{pmatrix}
k-2c_1 \\
k-2c_2 \\
\vdots \\
k-2c_s
\end{pmatrix}
$$

\noindent
and so $1 = \lambda(k-2c_i)$ for all $i \in [s]$. Note we cannot have $\lambda = 0$, otherwise $1 = 0$, and so $k-2c_i = \frac{1}{\lambda}$ for all $i \in [s]$, which implies for all $i,j \in [s]$ we have $k-2c_i = k-2c_j$ so that $c_i = c_j$. Note

$$A = \sum_{i \in [s]} c_i(k-c_i) = \sum_{i \in [s]} c_1(k-c_1) = sc_1(k-c_1)$$

so that $c_1(k-c_1) = \frac{A}{s}$, which implies $c_1^2 - kc_1 = -\frac{A}{s}$, and thus $(c_1 - \frac{k}{2})^2 = \frac{k^2}{4} - \frac{A}{s}$ and so $c_1 = \frac{k}{2} \pm \sqrt{\frac{k^2}{4}-\frac{A}{s}}$. Note $c_1 =  \frac{k}{2} + \sqrt{\frac{k^2}{4}-\frac{A}{s}}$ is the larger of the two, and therefore

$$\sum_{i \in [s]} c_i = \sum_{i \in [s]} c_1 = s(\frac{k}{2} + \sqrt{\frac{k^2}{4}-\frac{A}{s}}).$$

Now we show $s(\frac{k}{2} - \sqrt{\frac{k^2}{4}-\frac{A}{s}}) \leq \sum_{i \in [s]} \min\{c_i,d_i\}$. Without loss of generality assume $c_i = \min\{c_i,d_i\}$ for all $i \in [s]$, so we need to show $\sum_{i \in [s]} c_i \geq s(\frac{k}{2} - \sqrt{\frac{k^2}{4}-\frac{A}{s}})$. We find a lower bound on the minimum of $f(c_1,c_2,\ldots,c_s) = \sum_{i \in [s]} c_i$ under the above constraints; the minimum exists because the constrained space is compact and $f$ is continuous with respect to $c_i$ and $d_i$. First note we may replace the constraint $c_i + d_i \leq k$ with $c_i + d_i = k$ for all $i \in [s]$. If for some $j \in [s]$ we had $c_j+d_j < k$, then we can increase $d_j$ so that $c_j + d_j = k$ without affecting the value of $f(c_1,c_2,\ldots,c_s) = \sum_{i \in [s]} c_i$ while still maintaining the constraints. Thus the constraint $\sum_{i \in [s]} c_id_i \geq A$ becomes $\sum_{i \in [s]} c_i(k-c_i) \geq A$. \\

Next, we show that we may assume $\sum_{i \in [s]} c_i(k-c_i) = A$. If $\sum_{i \in [s]} c_i(k-c_i) > A$, for any $i \in [s]$ we may decrease $c_i$ so that $f(c_1,c_2,\ldots,c_s) = \sum_{i \in [s]} c_i$ attains a smaller value while still maintaining the constraints, unless $c_i = 0$ for all $i \in [s]$ in which case the inequality $\sum_{i \in [s]} c_i(k-c_i) > A$ becomes $0 > A$, a contradiction. From this point on, the argument is analogous to the one showing $\sum_{i \in [s]} \max\{c_i,d_i\} \leq s(\frac{k}{2} + \sqrt{\frac{k^2}{4}-\frac{A}{s}})$.
\end{proof}

\begin{corollary}
Let $c_i,d_i \in [0,1]$ where $i \in [r]$ and $k \in (0,1]$. If $c_i+d_i \leq k$ for all $i \in [r]$ and $\sum_{i \in [r]} c_id_i \geq 1$. Then for $s \leq r, s \in \mathbb{Z}^{+}$ satisfying $ 1 - (r-s)\frac{k^2}{4} \in [0,1]$, we have

$$s(\frac{k}{2} - \sqrt{\frac{rk^2}{4s}-\frac{1}{s}}) \leq \sum_{i \in [s]} \min\{c_i,d_i\} \leq \sum_{i \in [s]} \max\{c_i,d_i\} \leq s(\frac{k}{2} + \sqrt{\frac{rk^2}{4s}-\frac{1}{s}}).$$
\end{corollary}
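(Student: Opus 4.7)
The plan is to derive the corollary as a direct application of Lemma 3 to the $s$-index subproblem, after first quantifying how much of the sum $\sum_{i \in [r]} c_id_i$ can sit on the $r-s$ indices outside $[s]$.

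First I would observe that for each $i \in [r]$, the constraint $c_i + d_i \leq k$ together with AM-GM gives $c_id_i \leq \frac{(c_i+d_i)^2}{4} \leq \frac{k^2}{4}$. Hence the contribution of the $r-s$ indices in $[r]\setminus[s]$ to the total sum is at most $(r-s)\frac{k^2}{4}$, so
$$\sum_{i \in [s]} c_id_i \;\geq\; \sum_{i \in [r]} c_id_i - (r-s)\frac{k^2}{4} \;\geq\; 1 - (r-s)\frac{k^2}{4}.$$
Setting $A := 1 - (r-s)\frac{k^2}{4}$, the corollary's hypothesis that $A \in [0,1]$ is precisely the range condition needed to invoke Lemma 3 on the restricted family $\{(c_i,d_i) : i \in [s]\}$ with this value of $A$.

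Applying Lemma 3 with this choice of $A$ then yields
$$s\!\left(\tfrac{k}{2} - \sqrt{\tfrac{k^2}{4} - \tfrac{A}{s}}\right) \leq \sum_{i \in [s]} \min\{c_i,d_i\} \leq \sum_{i \in [s]} \max\{c_i,d_i\} \leq s\!\left(\tfrac{k}{2} + \sqrt{\tfrac{k^2}{4} - \tfrac{A}{s}}\right),$$
with the middle inequality being trivial. It then only remains to simplify the expression under the radical: expanding gives
$$\frac{k^2}{4} - \frac{A}{s} = \frac{k^2}{4} - \frac{1}{s} + \frac{(r-s)k^2}{4s} = \frac{k^2}{4}\cdot\frac{r}{s} - \frac{1}{s} = \frac{rk^2}{4s} - \frac{1}{s},$$
and substituting back produces exactly the bounds stated.

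There is essentially no obstacle here, since all of the real analytical work has already been carried out in Lemma 3. The only new content is the AM-GM observation used to strip off the $(r-s)$ ``other'' indices so that Lemma 3 can be applied to the remaining $s$ indices with the sharpened lower bound $A = 1 - (r-s)\frac{k^2}{4}$ on their product-sum, together with the routine algebraic identity reshaping $\tfrac{k^2}{4} - \tfrac{A}{s}$ into $\tfrac{rk^2}{4s} - \tfrac{1}{s}$.
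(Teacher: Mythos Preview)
Your proposal is correct and essentially identical to the paper's proof: both strip off the $r-s$ indices using the AM-GM bound $c_id_i \le k^2/4$, set $A = 1-(r-s)\tfrac{k^2}{4}$, apply the preceding lemma, and then simplify the radical. The only discrepancy is a labeling one: the lemma you invoke (the one giving the $\tfrac{k}{2}\pm\sqrt{\tfrac{k^2}{4}-\tfrac{A}{s}}$ bounds from $\sum c_id_i \ge A$) is Lemma~4 in the paper's numbering, not Lemma~3.
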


\begin{proof}
From the constraint $c_i+d_i \leq k$ for all $i \in [r]$, we have that $c_id_i \leq \frac{k^2}{4}$. Thus 

\begin{align*}
1 \leq \sum_{i \in [r]} c_id_i = \sum_{i \in [s]} c_id_i + \sum_{i \in [r] \setminus [s]} c_id_i &\leq \sum_{i \in [s]} c_id_i + \sum_{i \in [r] \setminus [s]} \frac{k^2}{4} \\
&= \sum_{i \in [s]} c_id_i +(r-s)\frac{k^2}{4}
\end{align*}

\noindent
so that $\sum_{i \in [s]} c_id_i \geq 1 - (r-s)\frac{k^2}{4}$. Apply Lemma 4 with $A = 1 - (r-s)\frac{k^2}{4}$, and we see that

$$\sqrt{\frac{k^2}{4}-\frac{A}{s}} = \sqrt{\frac{k^2}{4}-\frac{1}{s}(1 - (r-s)\frac{k^2}{4})} = \sqrt{\frac{rk^2}{4s}-\frac{1}{s}}.$$
\end{proof}

\begin{lemma}
For $t \in \mathbb{Z}^{+}$, if $a_i + b_i < \frac{1}{t}$ for $i \in [r]$, then for $|R| \leq t$ we have $a(R) = b(R) = 0$. 
\end{lemma}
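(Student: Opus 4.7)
The plan is to argue by contradiction, using constraint (5) to force a large lower bound on $\sum_{i \in R} b_i$ (or symmetrically on $\sum_{i \in R} a_i$), which will clash with the hypothesis $a_i + b_i < \frac{1}{t}$.

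Suppose for contradiction that $a(R) > 0$ for some $R \subseteq [r]$ with $|R| \leq t$. By constraint (5), every $R_2 \subseteq [r]$ that is disjoint from $R$ must satisfy $b(R_2) = 0$. Equivalently, the support of $b$ is contained in $\{R_2 \subseteq [r] : R_2 \cap R \neq \emptyset\}$, so using constraint (2) we get
$$\sum_{R_2 : R_2 \cap R \neq \emptyset} b(R_2) \;=\; \sum_{R_2 \subseteq [r]} b(R_2) \;=\; 1.$$

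Next, I would expand $\sum_{i \in R} b_i$ using constraint (3) and swap the order of summation:
$$\sum_{i \in R} b_i \;=\; \sum_{i \in R} \sum_{R_2 \subseteq [r],\, i \in R_2} b(R_2) \;=\; \sum_{R_2 \subseteq [r]} |R_2 \cap R| \cdot b(R_2) \;\geq\; \sum_{R_2 : R_2 \cap R \neq \emptyset} b(R_2) \;=\; 1.$$
On the other hand, the hypothesis $a_i + b_i < \frac{1}{t}$ gives $b_i < \frac{1}{t}$ for each $i \in R$, so
$$\sum_{i \in R} b_i \;<\; |R| \cdot \frac{1}{t} \;\leq\; t \cdot \frac{1}{t} \;=\; 1,$$
which contradicts the inequality just derived. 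Hence $a(R) = 0$; by an entirely symmetric argument (swapping the roles of $a$ and $b$ in the application of constraint (5)), we also obtain $b(R) = 0$.

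There is no real obstacle here; the proof is a short counting argument. The only slightly subtle step is recognizing that the nonlinear constraint (5) converts a single assumption $a(R) > 0$ into a blanket restriction on the support of $b$, which is what allows the sum $\sum_{i \in R} b_i$ to reach $1$ despite $R$ being small.
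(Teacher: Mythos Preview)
Your proof is correct and follows essentially the same approach as the paper: both use constraint~(5) together with constraint~(2) to deduce $\sum_{i \in R} b_i \geq 1$ from $a(R) > 0$, and then contradict the hypothesis $b_i < \frac{1}{t}$. The only cosmetic difference is that the paper applies pigeonhole to extract a single index $i$ with $b_i \geq \frac{1}{|R|} \geq \frac{1}{t}$, whereas you sum directly to get $\sum_{i \in R} b_i < |R|/t \leq 1$; these are equivalent.
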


\begin{proof}
Let $|R| \leq t$. Without loss of generality, if $a(R) > 0$, by constraint (5) this would imply

$$\sum_{i \in R} b_i \geq 1.$$

\noindent
Thus for some $i \in R$ we have $b_i \geq \frac{1}{|R|} = \frac{1}{t}$, and so $a_i + b_i \geq \frac{1}{t}$, a contradiction. \\
\end{proof}

\begin{proposition}
We have $g^*(5) = \frac{11}{12}$.
\end{proposition}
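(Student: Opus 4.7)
My plan is to prove the matching upper and lower bounds on $g^*(5)$ separately.

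For the upper bound $g^*(5) \leq 11/12$, I would exhibit a coloring $\chi : [0,1]^2 \to [5]$ by partitioning the unit square horizontally into a top band $[0,1] \times [5/12,1]$ of height $7/12$ and a bottom band $[0,1] \times [0,5/12]$ of height $5/12$; the top band is split into three equal columns of width $1/3$ colored $1$, $2$, $3$ from left to right, and the bottom band is split into two equal columns of width $1/2$ colored $4$ and $5$. A direct check yields $a_1 = a_2 = a_3 = 1/3$, $b_1 = b_2 = b_3 = 7/12$, $a_4 = a_5 = 1/2$, $b_4 = b_5 = 5/12$, so every color satisfies $a_i + b_i = 11/12$. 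Constraint (5) holds because $\mathcal{A} = \{\{1,4\}, \{2,4\}, \{2,5\}, \{3,5\}\}$ and $\mathcal{B} = \{\{4,5\}, \{1,2,3\}\}$ are cross-intersecting.

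For the lower bound $g^*(5) \geq 11/12$, I would suppose toward contradiction that $a_i + b_i < 11/12$ for every $i \in [5]$. Because $11/12 < 1$, Lemma 5 with $t=1$ forces $a(R) = b(R) = 0$ whenever $|R| \leq 1$, so every active $R$ has $|R| \geq 2$. I would dichotomize on whether some active $R$ has $|R| = 2$. If not, then by the corollary to Lemma 2, $\sum_i (a_i + b_i) = \sum_R |R|(a(R) + b(R)) \geq 3 \cdot 2 = 6$, hence some $a_i + b_i \geq 6/5 > 11/12$, a contradiction.

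Otherwise, without loss of generality $a(\{1,2\}) > 0$; constraint (5) then makes every $R \in \mathcal{B} := \{R : b(R) > 0\}$ meet $\{1, 2\}$, so $b_1 + b_2 \geq \sum_R b(R) = 1$. WLOG $b_1 \geq 1/2$, whence $a_1 < 5/12$. I would then split on whether some color $k \in [5]$ is absent from the active sets, i.e., $a_k = b_k = 0$: in that case restricting the data to $\mathcal{P}([5]\setminus\{k\})$ yields a valid solution to the coloring problem on $4$ colors, so by $g^*(4) = 1$ (Theorem 5, with $t=2$) some $a_i + b_i \geq 1 > 11/12$, a contradiction. If instead every color is active, I would iterate the cross-intersecting argument: each active $2$-set $\{i,j\} \in \mathcal{A}$ forces $b_i + b_j \geq 1$, and symmetrically for $\mathcal{B}$, and these inequalities combined with the bound $a_i + b_i < 11/12$ heavily restrict which small-size sets can appear in $\mathcal{A} \cup \mathcal{B}$.

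I expect the main obstacle to be this last sub-case, where every color is active and $\mathcal{A}, \mathcal{B}$ form cross-intersecting families dominated by $2$-sets (possibly with one $3$-set), mirroring the structure of the extremal construction. No single averaging inequality will suffice, since the slack between the general lower bound $2/\sqrt{r}$ from Theorem 4 and the true value $11/12$ is small; the argument will need to combine the strict per-color bounds, Lemma 1 ($\sum_i a_ib_i \geq 1$), the normalizations in constraint (2), and the cross-intersecting condition (5) simultaneously in order to extract the final contradiction.
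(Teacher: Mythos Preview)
Your upper-bound construction is correct (it is the transpose of the paper's), and your reductions in the lower bound---ruling out $|R|\le 1$ via Lemma~5, disposing of the case where every active $R$ has $|R|\ge 3$ by averaging, and reducing an absent color to $g^*(4)=1$---are all valid and mirror the paper's opening moves.

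The gap is that your final sub-case (all colors active, some $2$-set in $\mathcal A\cup\mathcal B$) is essentially the entire proof, and you have only listed the ingredients (constraint~(5), Lemma~1, constraint~(2), the per-color bound) without saying how to combine them. The paper's argument for this case is genuinely delicate and does not follow from a short iteration of the cross-intersection observation. Concretely, the paper first pushes the counting further to obtain the quantitative bounds $\sum_{|R|=2}a(R)\ge 17/24$ and $\sum_{|R|=2}b(R)\ge 5/12$; it then proves two structural claims---that $\mathcal B$ cannot contain two disjoint $2$-sets, and that $\mathcal B$ cannot contain two overlapping $2$-sets---each requiring a separate area-based contradiction via Lemma~3 and Corollary~1 (bounding $\sum a_ib_i$ against $1$). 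Only once $\mathcal B$ is forced to contain a \emph{unique} active $2$-set (WLOG $\{1,2\}$) does the bound $b(\{1,2\})\ge 5/12$ combine with $a_1+a_2\ge 1$ to yield $(a_1+b_1)+(a_2+b_2)\ge 11/6$. Your proposal does not anticipate the need to rule out multiple $2$-sets in $\mathcal B$, nor the area-counting lemmas required to do so; without those steps the argument does not close.
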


\begin{proof}
To see that $g^*(5) \leq \frac{11}{12}$, consider \\

\begin{center}
\begin{tikzpicture}[declare function={a = 4;}]

\draw (0,0) coordinate (A) ;
\draw (5*a/12,0) coordinate (B) ;
\draw (a,0) coordinate (C) ;
\draw (0,a) coordinate (D) ;
\draw (5*a/12,a) coordinate (E) ;
\draw (a,a) coordinate (F) ;
\draw (0,a/2) coordinate (G) ;
\draw (5*a/12,a/2) coordinate (H) ;
\draw (5*a/12,2*a/3) coordinate (I) ;
\draw (a,2*a/3) coordinate (J) ;
\draw (5*a/12,a/3) coordinate (K) ;
\draw (a,a/3) coordinate (L) ;

\draw (A) -- (B) ;
\draw (B) -- (C) ;
\draw (D) -- (E) ;
\draw (E) -- (F) ;
\draw (A) -- (D) ;
\draw (C) -- (F) ;
\draw (B) -- (E) ;
\draw (G) -- (H) ;
\draw (I) -- (J) ;
\draw (K) -- (L) ;

\draw (5*a/24,3*a/4) node{\Large 1} ;
\draw (5*a/24,a/4) node{\Large 2} ;
\draw (17*a/24,5*a/6) node{\Large 3} ;
\draw (17*a/24,a/2) node{\Large 4} ;
\draw (17*a/24,a/6) node{\Large 5} ;

\draw[decorate, decoration={brace,raise=3pt,amplitude=4pt}] (A) -- (G) node[midway,xshift=-0.6cm]{\Large $\frac{1}{2}$} ;
\draw[decorate, decoration={brace,raise=3pt,amplitude=4pt}] (G) -- (D) node[midway,xshift=-0.6cm]{\Large $\frac{1}{2}$} ;
\draw[decorate, decoration={brace,raise=3pt,amplitude=4pt,mirror}] (A) -- (B) node[midway,yshift=-0.6cm]{\Large $\frac{5}{12}$} ;
\draw[decorate, decoration={brace,raise=3pt,amplitude=4pt,mirror}] (B) -- (C) node[midway,yshift=-0.6cm]{\Large $\frac{7}{12}$} ;
\draw[decorate, decoration={brace,raise=3pt,amplitude=4pt,mirror}] (C) -- (L) node[midway,xshift=0.6cm]{\Large $\frac{1}{3}$} ;
\draw[decorate, decoration={brace,raise=3pt,amplitude=4pt,mirror}] (L) -- (J) node[midway,xshift=0.6cm]{\Large $\frac{1}{3}$} ;
\draw[decorate, decoration={brace,raise=3pt,amplitude=4pt,mirror}] (J) -- (F) node[midway,xshift=0.6cm]{\Large $\frac{1}{3}$} ;

\end{tikzpicture}
\end{center}

We now show $g^*(5) \geq \frac{11}{12}$. Consider any solution $a,b : \mathcal{P}([5]) \rightarrow [0,1]$ satisfying constraints (1), (2), (3), and (5). We may assume $a_i + b_i \leq \frac{11}{12}$ for all $i \in [5]$. By Lemma 5, we note for $R \subseteq [5]$, if $|R| \leq 1$ then $a(R) = b(R) = 0$, since $a_i + b_i \leq \frac{11}{12} < \frac{1}{1}$ for all $i \in [5]$. We have

$$\sum_{R \subseteq [5]} |R| \cdot a(R) + |R| \cdot b(R) = \sum_{i \in [5]} a_i + b_i \leq 5 \cdot \frac{11}{12} = \frac{55}{12}.$$

\noindent
Without loss of generality we may thus assume $\sum_{R \subseteq [5]} |R| \cdot a(R)  \leq \frac{1}{2} \cdot \frac{55}{12} = \frac{55}{24}$. This gives us

\begin{align*}
\frac{55}{24} \geq \sum_{R \subseteq [5]} |R| \cdot a(R) &=  \left( \sum_{R \subseteq [5], |R| = 2} |R| \cdot a(R) \right) + \left( \sum_{R \subseteq [5], |R| \geq 3} |R| \cdot a(R) \right) \\
&\geq  \left( \sum_{R \subseteq [5], |R| = 2} 2 \cdot a(R) \right) + \left( \sum_{R \subseteq [5], |R| \geq 3} 3 \cdot a(R) 
\right) \\
&=  2 \left( \sum_{R \subseteq [5], |R| = 2} a(R) \right) + 3 \left( \sum_{R \subseteq [5], |R| \geq 3} a(R) 
\right) \\
&= 2 \left( \sum_{R \subseteq [5], |R| = 2} a(R) +  \sum_{R \subseteq [5], |R| \geq 3} a(R) \right) + \left( \sum_{R \subseteq [5], |R| \geq 3} a(R) 
\right) \\
&= 2 + \left( \sum_{R \subseteq [5], |R| \geq 3} a(R) \right). 
\end{align*}

\noindent
So we get $\frac{55}{24} \geq 2 + \sum_{R \subseteq [5], |R| \geq 3} a(R)$, which implies 

$$\sum_{R \subseteq [5], |R| \geq 3} a(R) \leq \frac{55}{24} - 2 = \frac{7}{24}.$$

\noindent 
This implies

\begin{equation}
\sum_{R \subseteq [5], |R| = 2} a(R) \geq 1- \frac{7}{24} = \frac{17}{24}
\end{equation}

Recall we have that $\sum_{R \subseteq [5]} |R| \cdot a(R) + |R| \cdot b(R) \leq \frac{55}{12}$ and we also have 

$$\sum_{R \subseteq [5]} |R| \cdot a(R) = \sum_{R \subseteq [5], |R| \geq 2} |R| \cdot a(R) \geq 2$$ 

\noindent
thus 

$$\sum_{R \subseteq [5]} |R| \cdot b(R) \leq \frac{55}{12} - 2 = \frac{31}{12}.$$

\noindent
Thus we have 

\begin{align*}
 \frac{31}{12} \geq \sum_{R \subseteq [5]} |R| \cdot b(R) &= \left( \sum_{R \subseteq [5], |R| = 2} |R| \cdot b(R) \right) + \left( \sum_{R \subseteq [5], |R| \geq 3} |R| \cdot b(R) \right) \\
&\geq 2 \left( \sum_{R \subseteq [5], |R| = 2} b(R) \right) + 3 \left( \sum_{R \subseteq [5], |R| \geq 3} b(R) \right) \\
&= 2 + \left( \sum_{R \subseteq [5], \ |R| \geq 3} b(R) \right)
\end{align*}

\noindent
and so $\frac{31}{12} \geq 2 + \sum_{R \subseteq [5], |R| \geq 3} b(R)$, thus

$$\sum_{R \subseteq [5], \ |R| \geq 3} b(R) \leq \frac{31}{12} - 2 = \frac{7}{12}$$

\noindent
which in turn implies

\begin{equation}
\sum_{R \subseteq [5],|R| = 2} b(R) \geq \frac{5}{12}.
\end{equation}

\textbf{Claim 1}: For $i,j,k,l$ distinct, we can't have both $b(\{i,j\}) >0$ and $b(\{k,l\}) > 0$. Assume for the sake of contradiction otherwise, and without loss of generality assume $i = 1, j = 2, k = 3, l = 4$ so that $b(\{1,2\}) > 0$ and $b(\{3,4\}) > 0$. This implies that $a_1 + a_2 \geq 1$ and $a_3 + a_4 \geq 1$ respectively. Without loss of generality, assume $a_1 \geq \frac{1}{2}$, so that since $\sum_{R \subseteq [5], |R| = 2} a(R) \geq \frac{17}{24} > 1 - \frac{1}{2}$ from (7), we must have $a(\{1,i\}) > 0$ for some $i \in [5] \setminus \{1\}$. Furthermore since $b(\{3,4\}) > 0$ and so $a'(\{3\}^{\uparrow} \cup \{4\}^{\uparrow}) = 1$, we may assume without loss of generality $i = 3$ so that $a(\{1,3\}) > 0$. This implies that $b_1 + b_3 \geq 1$. Since $a_i + b_i \leq \frac{11}{12}$ for all $i \in [5]$ and $a_1 \geq \frac{1}{2}$, we must have that $b_1 \leq \frac{5}{12}$ so that $b_3 \geq \frac{7}{12}$, which in turn implies $a_3 \leq \frac{1}{3}$. Since $a_3 \leq \frac{1}{3}$ and $a_3 + a_4 \geq 1$, we must have $a_4 \geq \frac{2}{3}$ and so $b_4 \leq \frac{1}{4}$. \\

Now we note with the constraints $a_1 + a_2 \geq 1$ and $a_i+b_i \leq \frac{11}{12}$ for $i \in [2]$, we have $a_1b_1 + a_2b_2 \leq \frac{5}{12}$ by Lemma 3 as $\frac{11}{12} \leq \frac{2}{2} 
= 1$. Since $a_3 \leq \frac{1}{3}, b_3 \geq \frac{7}{12}$ we have $a_3b_3 \leq \frac{7}{36}$. Since $a_4 \geq \frac{2}{3}$ and $b_4 \leq \frac{1}{4}$, we have $a_4b_4 \leq \frac{1}{6}$. Thus we have

$$a_1b_1 + a_2b_2 + a_3b_3 + a_4b_4 \leq \frac{5}{12} + \frac{7}{36} + \frac{1}{6} = \frac{28}{36} = \frac{7}{9}.$$

\noindent
This implies $a_5b_5 \geq 1 - \frac{7}{9} = \frac{2}{9}$. But since $a_5 + b_5 \leq \frac{11}{12}$, we have $a_5b_5 \leq (\frac{1}{2} \cdot \frac{11}{12})^2 < \frac{2}{9}$, a contradiction. \\

\textbf{Claim 2}: We show we cannot have both $b(\{i,j\}) > 0$ and $b(\{i,k\}) > 0$ for $i,j,k \in [5]$ distinct. Assume otherwise for the sake of contradiction, and without loss of generality assume $i=1,j=2,k=3$, so that $b(\{1,2\}) > 0$ and $b(\{1,3\}) > 0$. This implies that $a_1 + a_2 \geq 1$ and $a_1 + a_3 \geq 1$ respectively. By Corollary 1, $a_1 \leq 0.69$. This implies $a'(\{2,3\}^{\uparrow} \setminus \{1\}^{\uparrow}) \geq 1-0.69 = 0.31$ because $b(\{1,2\}) > 0$ and $b(\{1,3\}) > 0$ which implies if $a(R) > 0$ and $1 \not \in R$ then $2,3 \in R$. Recall $\sum_{R \subseteq [5], |R| = 2} a(R) \geq \frac{17}{24}$ from (7), and we note that $\frac{17}{24} > 1 - 0.31$, so that $a(\{2,3\}) > 0$, and so $b_2 + b_3 \geq 1$. \\

We now show that $\{1,2\}, \{1,3\}$ are the only two subsets $R \subseteq [5]$ that satisfy $|R| = 2$ and and $b(R) > 0$. Consider some $R \subseteq [5]$ such that $b(R) > 0$ and $|R| = 2$. Without loss of generality, we can assume $2 \in R$ because $a(\{2,3\}) > 0$. If $1 \in R$ we just have $R = \{1,2\}$, so we are done. If $3 \in R$, then we have $b(\{2,3\}) > 0$ and so $a_2 + a_3 \geq 1$. But recall we also have $b_2 + b_3 \geq 1$, so that $(a_2 + b_2) + (a_3 + b_3) \geq 2$, so that either $a_2 + b_2 \geq 1$ or $a_3 + b_3 \geq 1$, contradicting $a_i + b_i \leq \frac{11}{12}$ for all $i \in [5]$. Thus without loss of generality let $4 \in R$, so $R =  \{2,4\}$. So we have $b(\{2,4\}) > 0$, but we also have $b(\{1,3\}) > 0$ by assumption, contradicting Claim 1. \\

The previous paragraph shows that if $R \neq \{1,2\}, \{1,3\}$ and $b(R) > 0$, then $|R| \geq 3$. Recall equation (8)

$$\sum_{R \subseteq [5], |R| = 2} b(R) = b(\{1,2\}) + b(\{1,3\}) \geq \frac{5}{12}.$$

\noindent
This gives us $b_1 \geq \frac{5}{12}$ so that $a_1 \leq \frac{1}{2}$. Since $a_1 + a_2 \geq 1$ and $a_1 + a_3 \geq 1$, this implies $a_2,a_3 \geq \frac{1}{2}$, and so $b_2,b_3 \leq \frac{5}{12}$ and thus $b_2 + b_3 \leq \frac{5}{6}$, contradicting $b_2 + b_3 \geq 1$. \\

We now finally show $g^*(5) \geq \frac{11}{12}$. From (8), we know there exists $|R| = 2$ such that $b(R) > 0$, so without loss of generality assume $b(\{1,2\}) > 0 $. This implies $a_1 + a_2 \geq 1$. This in turn implies $b_1 + b_2 \leq 2 \cdot \frac{11}{12} - 1 = \frac{5}{6}$. In particular, $b'(\{1\}^{\uparrow} \cup \{2\}^{\uparrow}) \leq \frac{5}{6}$, which implies $\sum_{R \subseteq [5] \setminus [2]} b(R) \geq \frac{1}{6}$. We claim $\sum_{R \subseteq [5] \setminus [2]} b(R) = b(\{3,4,5\})$ so that $b(\{3,4,5\}) \geq \frac{1}{6}$. Note $b(R) > 0$ implies $|R| \geq 2$, and if $|R| = 2$ and $R \subseteq [5] \setminus [2]$, this would contradict Claim 1 as we also have $b(\{1,2\}) > 0$. Thus $|R| = 3$ and so $R = \{3,4,5\}$.  \\

We claim that if $b(R) > 0$ and $|R| = 2$, then $R = \{1,2\}$. So assume $b(R) > 0$ and $|R| = 2$. First assume $1 \in R$. If $2 \in R$ we are done. If $3 \in R$, we'd have $R = \{1,3\}$, contradicting claim 2 because $b(\{1,2\}) > 0$ as well. Likewise if $4,5 \in R$. If we start off with the assumption $2 \in R$, we get a symmetric argument. Thus $R$ cannot contain $1,2$. This means $R \subseteq [5] \setminus [2]$, so by the preceding paragraph we have $R = \{3,4,5\}$, and so $|R| \neq 2$. \\

Recall equation (8) that $\sum_{R \subseteq [5], |R| = 2} b(R) = b(\{1,2\}) \geq \frac{5}{12}$. So $b_1, b_2 \geq \frac{5}{12}$. Recall we also had $a_1+a_2 \geq 1$. Thus $(a_1+b_1)+(a_2+b_2) \geq 1 + 2 \cdot \frac{5}{12} = \frac{11}{6}$, so for some $i \in [2]$ we have $a_i+b_i \geq \frac{11}{12}$, as required. 
\end{proof}

\begin{lemma}
Let $c_i, d_i \in [0,1]$ where $i \in [s]$, $k \in (0,1]$, and $[s] = I \sqcup O_1 \sqcup O_2$ where $|I| \geq 1$ and $|O_1| = |O_2| \geq 1$. If $c_i + d_i \leq k < \frac{2}{|I|+|O_1|}$ for all $i \in [s]$ and 

$$\sum_{i \in I \sqcup O_1} c_i \geq 1$$ 
$$\sum_{i \in I \sqcup O_2} d_i \geq 1,$$

\noindent
then if $e = \frac{2-|I| \cdot k}{2|O_1|}$ we have 

$$\sum_{i \in [s]} c_id_i \leq |I|\frac{k^2}{4} + 2|O_1| \cdot e(k-e).$$ 
\end{lemma}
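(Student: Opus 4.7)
The plan is to reduce the constrained maximization of $\sum_{i \in [s]} c_id_i$ to a one-variable quadratic optimization. By compactness of the feasible region and continuity of the objective a maximizer $(c,d)$ exists, and a short perturbation argument shows that $c_i + d_i = k$ for every $i$ at the maximum: if $c_j + d_j < k$ for some $j$ we may increase $d_j$ (or $c_j$, or both if both vanish) by a small amount, strictly raising $c_jd_j$ while only making the two lower-bound sum constraints easier to satisfy. Writing $c_i = k/2 + \delta_i$ and $d_i = k/2 - \delta_i$ with $|\delta_i| \leq k/2$, one has $c_id_i = k^2/4 - \delta_i^2$, so maximizing $\sum c_id_i$ is equivalent to minimizing $\sum_{i \in [s]} \delta_i^2$.

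Setting $\alpha := 1 - (|I| + |O_1|)k/2$, which is strictly positive by the hypothesis $k < 2/(|I|+|O_1|)$, the two sum constraints translate into $\sum_{i \in I} \delta_i + \sum_{i \in O_1} \delta_i \geq \alpha$ and $\sum_{i \in I} \delta_i + \sum_{i \in O_2} \delta_i \leq -\alpha$. For any assignment of the block sums $x = \sum_{I} \delta_i$, $y = \sum_{O_1} \delta_i$, $z = \sum_{O_2} \delta_i$, Cauchy--Schwarz yields $\sum_{I} \delta_i^2 \geq x^2/|I|$, $\sum_{O_1} \delta_i^2 \geq y^2/|O_1|$, and $\sum_{O_2} \delta_i^2 \geq z^2/|O_1|$, with equality when the $\delta_i$ are constant on each block. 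The task thus reduces to minimizing $\frac{x^2}{|I|} + \frac{y^2 + z^2}{|O_1|}$ subject to $x + y \geq \alpha$ and $x + z \leq -\alpha$.

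A short argument (the strictly convex quadratic objective has its unconstrained minimum at the origin, which violates both linear bounds) shows that both linear constraints are tight at the minimum. Substituting $y = \alpha - x$ and $z = -\alpha - x$ turns the objective into $\frac{x^2}{|I|} + \frac{2x^2 + 2\alpha^2}{|O_1|}$, a convex quadratic in $x$ minimized at $x = 0$ with value $2\alpha^2/|O_1|$. Therefore the maximum of $\sum c_id_i$ equals $sk^2/4 - 2\alpha^2/|O_1|$ where $s = |I| + 2|O_1|$, and a direct algebraic check using $e = k/2 + \alpha/|O_1|$ (so that $e(k-e) = k^2/4 - \alpha^2/|O_1|^2$) rewrites this as $|I| \cdot k^2/4 + 2|O_1| \cdot e(k-e)$, matching the claim.

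The main subtlety will be verifying that the extremizer $\delta_i = \alpha/|O_1|$ on $O_1$, $\delta_i = -\alpha/|O_1|$ on $O_2$, and $\delta_i = 0$ on $I$ actually lies in the feasible region, i.e., satisfies $|\delta_i| \leq k/2$. This amounts to $k \geq 2/(|I|+2|O_1|)$, and is automatically forced by the feasibility of the original constraints: the feasible set is empty when $k < 2/(|I|+2|O_1|)$, so the claim is vacuous in that regime. The remainder is routine calculation in the spirit of Lemmas 3 and 4.
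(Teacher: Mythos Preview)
Your approach is correct and genuinely different from the paper's. Both proofs begin by forcing $c_i+d_i=k$ at the maximum, but then diverge. The paper performs a somewhat lengthy perturbation argument to force \emph{both} sum constraints to equalities, then applies Lagrange multipliers on the full $s$-variable problem, obtaining that the $c_i$ are constant on each block and solving the resulting system. You instead substitute $c_i=k/2+\delta_i$, turning the objective into $sk^2/4-\sum\delta_i^2$, and use Cauchy--Schwarz to collapse to the three block sums $x,y,z$; the resulting $3$-variable convex quadratic program is then solved directly. Your route is shorter and avoids multivariable Lagrange multipliers entirely; the paper's route is more explicit about the structure of the extremizer.

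Two minor comments. First, your parenthetical justification that both linear constraints are tight (``the unconstrained minimum at the origin violates both'') is not in general sufficient: violating both constraints at the unconstrained minimum does not automatically force both to bind at the constrained minimum. Here it does hold, and the cleanest way to see it is the symmetry $(x,y,z)\mapsto(-x,-z,-y)$, which preserves the objective and swaps the two constraints; uniqueness of the strictly convex minimizer then forces $x=0$, $y=-z$, after which both constraints collapse to $y\ge\alpha$. Second, your closing ``subtlety'' paragraph about feasibility of the extremizer is unnecessary: you are proving an upper bound on $\sum c_id_i$, and the Cauchy--Schwarz step followed by the $3$-variable minimization yields a valid lower bound on $\sum\delta_i^2$ regardless of whether equality is attained inside the box $|\delta_i|\le k/2$. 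The feasibility check would matter only if you wished to assert the bound is sharp, which the lemma does not require.
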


\begin{proof}
We find an upper bound on the maximum of $\sum_{i \in [s]} c_id_i$ under the constraints above. The maximum exists because $\sum_{i \in [s]} c_id_i$ is continuous and the constrained space is compact with respect to $c_i,d_i$. We may assume $c_i + d_i = k$ for all $i \in [s]$, because if $c_j + d_j < k$ for some $j \in [s]$, then we may increase both $c_j,d_j$ so that $\sum_{i \in [s]} c_id_i$ increases. Thus we can just consider maximizing  

$$f(c_1,c_2,\ldots,c_{s}) = \sum_{i \in [s]} c_i(k-c_i)$$

\noindent
and the constraint $\sum_{i \in I \sqcup O_2} d_i \geq 1$ becomes

$$\sum_{i \in I \sqcup O_2} k-c_i  \geq 1.$$

We now show we can assume $\sum_{i \in I \sqcup O_1} c_i  = 1$ and $\sum_{i \in I \sqcup O_2} k-c_i  = 1$. We first note that the derivative of $g(x) = x(k-x)$ is $g'(x) = k-2x$, and so $g(x)$ is maximized at $x = \frac{k}{2}$. Furthermore, $g''(x) = -2$ so that if $x$ approaches $\frac{k}{2}$ we know $g(x)$ increases. If we had $\sum_{i \in I \sqcup O_1} c_i  > 1$, this would imply for some $j \in I \sqcup O_1$ we had $c_j > \frac{1}{|I| + |O_1|} > \frac{k}{2}$, where the last inequality comes from $k < \frac{2}{|I|+|O_1|}$. Thus we may decrease $c_j$ so that $f(c_1,c_2,\ldots,c_{s}) = \sum_{i \in [s]} c_i(k-c_i)$ attains a larger value while still maintaining the constraints. So let $\sum_{i \in I \sqcup O_1} c_i  = 1$ be the new constraint in place of $\sum_{i \in I \sqcup O_1} c_i  \geq 1$. \\

Now we show we may assume $\sum_{i \in I \sqcup O_2} k-c_i = 1$. So assume otherwise that $\sum_{i \in I \sqcup O_2} k-c_i > 1$. Note if for some $j \in O_2$ we had $k -  c_j > \frac{k}{2}$, then we may increase $c_j$ so that $f(c_1,c_2,\ldots,c_{s}) = \sum_{i \in [s]} c_i(k-c_i)$ attains a larger value while still maintaining the constraints. So assume for all $i \in O_2$, we have $k-c_i \leq \frac{k}{2}$. If for some $j \in O_2$ we had $k-c_j < \frac{k}{2}$, then we may decrease $c_j$ so that $f(c_1,c_2,\ldots,c_{s}) = \sum_{i \in [s]} c_i(k-c_i)$ attains a larger value while still maintaining the constraints. So assume $k-c_i = \frac{k}{2}$ for all $i \in O_2$. Thus $\sum_{i \in O_2} k-c_i = |O_2|\frac{k}{2}$, so along with $\sum_{i \in I \sqcup O_2} k-c_i > 1$ we get $\sum_{i \in I} k-c_i > 1 - | O_2|\frac{k}{2}$, and so 

$$\sum_{i \in I} c_i < |I|k + |O_2|\frac{k}{2} - 1 = (2|I| + |O_2|)\frac{k}{2}-1. $$ 

Since $\sum_{i \in I \sqcup O_1} c_i = 1$ and $\sum_{i \in I} c_i < (2|I| + |O_2|)\frac{k}{2}-1$, we must have 

$$\sum_{i \in O_1} c_i = 1-\sum_{i \in I} c_i > 2-(2|I| + |O_2|)\frac{k}{2}$$

\noindent
and thus for some $j \in O_1$ we have $c_j > \frac{2}{|O_1|} - \frac{k(2|I|+|O_1|)}{2|O_1|} > \frac{k}{2}$, where the last inequality can be rearranged into $\frac{k}{2} < \frac{1}{|I|+|O_1|}$. From $\sum_{i \in I} c_i <  (2|I| + |O_2|)\frac{k}{2}-1 $ we also get for some $l \in I$ we have $c_l < (2+\frac{|O_2|}{|I|})\frac{k}{2} - \frac{1}{|I|} < \frac{k}{2}$ where the last inequality can also be rearranged into $\frac{k}{2} < \frac{1}{|I|+|O_1|}$. \\

Note we can increase $c_l$ by $\epsilon > 0$ and decrease $c_j$ by $\epsilon$ so that $f(c_1,c_2,\ldots,c_{s}) = \sum_{i \in [s]} c_i(k-c_i)$ attains a larger value while still maintaining the constraints $\sum_{i \in I \sqcup O_1} c_i = 1$ and $\sum_{i \in I \sqcup O_2 } k-c_i \geq 1$. The constraint $\sum_{i \in I \sqcup O_1} c_i = 1$ is preserved because the $\epsilon$'s cancel out. The constraint $\sum_{i \in I \sqcup O_2 } k-c_i \geq 1$ can be preserved for small enough $\epsilon$ because by assumption $\sum_{i \in I \sqcup O_2 } k-c_i > 1$. Thus under the assumption $\sum_{i \in I \sqcup O_2 } k-c_i > 1$, we can always find a larger value of $f(c_1,c_2,\ldots,c_{s}) = \sum_{i \in [s]} c_i(k-c_i)$ while maintaining the desired constraints, so we may assume $\sum_{i \in I \sqcup O_2} k-c_i = 1$. \\

We claim under the constraints $\sum_{i \in I \sqcup O_1} c_i = 1$ and $\sum_{i \in I \sqcup O_2 } k-c_i = 1$, the function $f(c_1,c_2,\ldots,c_{s}) = \sum_{i \in [s]} c_i(k-c_i)$ has a maximum. From the constraint $\sum_{i \in I \sqcup O_2 } k-c_i = 1$ we get that $\sum_{i \in I \sqcup O_2} c_i = (|I| + |O_2|)k-1$. Note we have

\begin{align*}
f(c_1,c_2,\ldots,c_{s}) &= \sum_{i \in [s]} c_i(k-c_i) = k \left( \sum_{i \in [s]} c_i \right) - \sum_{i \in [s]} c_i^2 \\
&= k \left( \sum_{i \in I \sqcup O_1} c_i + \sum_{i \in O_2} c_i \right) - \sum_{i \in [s]} c_i^2 = k \left( 1+ \sum_{i \in O_2} c_i \right) - \sum_{i \in [s]} c_i^2.
\end{align*}

\noindent
This shows for any $i \in I \sqcup O_1$, we have 

$$f(c_1,c_2, \ldots, c_i, \ldots, c_s) = f(c_1,c_2, \ldots, |c_i|, \ldots, c_s)$$

\noindent
and analogously 

\begin{align*}
f(c_1,c_2,\ldots,c_{s}) &= \sum_{i \in [s]} c_i(k-c_i) = k \left( \sum_{i \in [s]} c_i \right) - \sum_{i \in [s]} c_i^2 \\
&= k \left( \sum_{i \in O_1} c_i + \sum_{i \in I \sqcup O_2} c_i \right) - \sum_{i \in [s]} c_i^2 \\
&= k \left( \left( \sum_{i \in O_1} c_i \right) +  (|I| + |O_2|)k-1  \right) - \sum_{i \in [s]} c_i^2
\end{align*}

\noindent 
which shows for any $i \in I \sqcup O_2$, we have 

$$f(c_1,c_2, \ldots, c_i, \ldots, c_s) = f(c_1,c_2, \ldots, |c_i|, \ldots, c_s).$$ 

\noindent
Thus $f(c_1,c_2,\ldots,c_s) = f(|c_1|,|c_2|,\ldots,|c_s|)$, so we may assume $c_i \geq 0$ for all $i \in [s]$. Note along with the assumption $c_i \geq 0$ for all $i \in [s]$, the constraints $\sum_{i \in I \sqcup O_1} c_i = 1$ and $\sum_{i \in I \sqcup O_2 } k-c_i = 1$ give rise to a compact space, and since $f$ is continuous we know $f$ attains a maximum in this compact space defined by the assumption $c_i \geq 0$ for all $i \in [s]$ with the constraints $\sum_{i \in I \sqcup O_1} c_i = 1$ and $\sum_{i \in I \sqcup O_2 } k-c_i = 1$, and thus $f$ attains a maximum with the constraints $\sum_{i \in I \sqcup O_1} c_i = 1$ and $\sum_{i \in I \sqcup O_2 } k-c_i = 1$ only. \\

We use Lagrange multipliers to maximize $f(c_1,c_2,\ldots,c_{s}) = \sum_{i \in [s]} c_i(k-c_i)$ under the constraints $\sum_{i \in I \sqcup O_1} c_i = 1$ and $\sum_{i \in I \sqcup O_2 } k-c_i = 1$. We have $\frac{\partial f}{\partial c_i} = k - 2c_i$ for all $i \in [s]$. Define $g_1(c_1,c_2,\ldots,c_s) = \sum_{i \in I \sqcup O_1} c_i$ and $g_2(c_1,c_2,\ldots,c_s) = -(\sum_{i \in I \sqcup O_2 } k-c_i)$. So $\frac{\partial g_1}{\partial c_i} = 1$ if $i \in I \cup O_1$ and $0$ otherwise, and $\frac{\partial g_2}{\partial c_i} = 1$ if $i \in I \cup O_2$ and $0$ otherwise. Thus for some $\lambda_1,\lambda_2 \in \mathbb{R}$ we have 
$k-2c_i = \lambda_1+\lambda_2$ for all $i \in I$, and $k-2c_i = \lambda_1$ for all $i \in O_1$, and $k-2c_i = \lambda_2$ for all $i \in O_2$. This gives us for all $i,j \in I$ we have $c_i = c_j$, for all $i,j \in O_1$ we have $c_i = c_j$, and for all $i,j \in O_2$ we have $c_i = c_j$. \\

Without loss of generality, let $c_1 \in O_1, c_2 \in O_2, c_3 \in I$. We get 

$$\sum_{i \in I \sqcup O_1} c_i = \sum_{i \in I} c_i + \sum_{i \in O_1} c_i = \sum_{i \in I} c_3 + \sum_{i \in O_1} c_1 = |I|c_3 + |O_1|c_1 = 1.$$

\noindent
Likewise

$$\sum_{i \in I \sqcup O_2} k-c_i = \sum_{i \in I}k- c_i + \sum_{i \in O_2}k- c_i = |I|(k-c_3) + |O_2|(k-c_2) = 1.$$

\noindent
Thus we get $|I|c_3 + |O_1|c_1 =|I|(k-c_3) + |O_2|(k-c_2)$, and so 

$$2|I|c_3 = |I|k+|O_1|(k-c_2-c_1)$$

\noindent
which gives us $2c_3 = k+ \frac{|O_1|}{|I|}(k-c_2-c_1)$. Recall also that $k-2c_3 = \lambda_1 + \lambda_2 = k-2c_1+k-2c_2 $ and thus $2c_3 = 2c_1+2c_2-k$. Thus we have 

$$2c_1+2c_2-k = 2c_3 = k+\frac{|O_1|}{|I|}(k-c_2-c_1)$$

\noindent
which rearranges into

$$0 = (\frac{|O_1|}{|I|} + 2)(k-c_2-c_1)$$

\noindent
and thus $c_1+c_2 = k$. \\

Recall $2c_3 = 2c_1+2c_2 - k$, so along with $c_1+c_2 = k$ we get $2c_3 = k$ so that $c_3 = \frac{k}{2}$. Recall also that $|I|c_3 + |O_1|c_1 = 1$, so that 

$$c_1 = \frac{1}{|O_1|}(1-|I|\frac{k}{2}) = \frac{2-|I|k}{2|O_1|} = e.$$

\noindent
Note from $c_1+c_2 = k$ we get $c_1(k-c_1) = c_2(k-c_2)$ Finally, we compute

\begin{align*}
f(c_1,c_2,\ldots,c_s) &= |I|c_3(k-c_3) + |O_1|c_1(k-c_1)+|O_2|c_2(k-c_2) \\
&= |I|\frac{k^2}{4} + 2|O_1| \cdot e(k-e) 
\end{align*}

\noindent
as required.
\end{proof}

\begin{proposition}
We have $g^*(6) = \frac{5}{6}$.
\end{proposition}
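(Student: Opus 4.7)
The upper bound $g^*(6) \leq \frac{5}{6}$ comes from the natural $2 \times 3$ grid construction: set $a(\{1,3,5\}) = a(\{2,4,6\}) = \frac{1}{2}$ and $b(\{1,2\}) = b(\{3,4\}) = b(\{5,6\}) = \frac{1}{3}$. Visually, this partitions the unit square into two vertical halves and three horizontal thirds, placing a distinct color in each of the six resulting cells, so $a_i + b_i = \frac{1}{2} + \frac{1}{3} = \frac{5}{6}$ for every $i \in [6]$.

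For the lower bound, I would assume for contradiction a solution with $a_i + b_i \leq k < \frac{5}{6}$ for all $i \in [6]$. Since $k < 1$, Lemma 5 applied with $t=1$ gives $a(R) = b(R) = 0$ whenever $|R| \leq 1$, so Lemma 2 yields $\sum_i a_i \geq 2$ and $\sum_i b_i \geq 2$, while $\sum_i (a_i+b_i) \leq 6k < 5$. I would next argue that both $\sum_{|R|=2} a(R)$ and $\sum_{|R|=2} b(R)$ are strictly positive: if, say, $\sum_{|R|=2} b(R) = 0$, then $b$ is supported on $|R| \geq 3$, forcing $\sum_i b_i \geq 3 \sum_R b(R) = 3$ and hence $\sum_i(a_i+b_i) \geq 5$, a contradiction.

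Now pick any $R_a$ with $|R_a|=2, a(R_a) > 0$ and any $R_b$ with $|R_b|=2, b(R_b) > 0$. Constraint (5) forbids $R_a \cap R_b = \emptyset$, so they must overlap. If the only overlap ever achievable were $R_a = R_b = \{i,j\}$ (meaning both supports consist of the same single pair), then $a_i + a_j \geq 1$ and $b_i + b_j \geq 1$ together give $(a_i+b_i)+(a_j+b_j) \geq 2 > 2k$, contradicting $a_i+b_i \leq k$. Hence we may pick $R_a \neq R_b$ overlapping in exactly one element; without loss of generality $R_a = \{1,2\}$ and $R_b = \{1,3\}$, yielding $b_1+b_2 \geq 1$ and $a_1+a_3 \geq 1$.

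Finally, I apply Lemma 6 with $c_i = a_i, d_i = b_i$, $I=\{1\}, O_1=\{3\}, O_2=\{2\}$, and $k = \frac{5}{6}$ (noting $k < 1 = \frac{2}{|I|+|O_1|}$). Computing $e = \frac{2-|I|k}{2|O_1|} = \frac{7}{12}$ yields
\[
\sum_{i \in \{1,2,3\}} a_i b_i \;\leq\; |I|\frac{k^2}{4} + 2|O_1|\,e(k-e) \;=\; \frac{25}{144} + \frac{42}{144} \;=\; \frac{67}{144},
\]
and AM--GM gives $a_i b_i \leq \frac{k^2}{4} = \frac{25}{144}$ for $i \in \{4,5,6\}$. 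Summing, $\sum_{i \in [6]} a_i b_i \leq \frac{142}{144} < 1$, contradicting Lemma 1. The main delicacy is the interplay between constraint (5)---which rules out any disjoint $a$-pair/$b$-pair configuration and thereby forces an overlap---and Lemma 6, which turns that overlap into a numerical bound on three of the six colors just sharp enough ($\frac{142}{144} < 1$) to defeat $\sum_i a_i b_i \geq 1$.
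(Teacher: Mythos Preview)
Your argument is correct and follows essentially the same route as the paper. Both proofs hinge on the same dichotomy---either one side of the coloring has no size-$2$ support (forcing $\sum_i(a_i+b_i)\ge 5$) or a size-$2$ pair $R_a,R_b$ exists on each side, and both dispatch the latter with the identical application of Lemma~6 (with $I=\{1\}$, $|O_1|=|O_2|=1$, $k=\tfrac{5}{6}$) together with the AM--GM bound $a_ib_i\le \tfrac{25}{144}$ on the remaining three colors to contradict $\sum_i a_ib_i\ge 1$; you simply run the contrapositive of the paper's presentation by assuming the strict inequality $k<\tfrac{5}{6}$ up front, and your case analysis that the overlapping pairs cannot both reduce to the same single $\{i,j\}$ matches the paper's ``$R_1=R_2$'' subcase.
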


\begin{proof}
To see that $g^*(6) \leq \frac{5}{6}$, consider \\

\begin{center}
\begin{tikzpicture}[declare function={a=4;}]
\draw (0,0) coordinate (A) ;
\draw (a/2,0) coordinate (B) ;
\draw (a,0) coordinate (C) ;
\draw (0,a/3) coordinate (D) ;
\draw (a/2,a/3) coordinate (E) ;
\draw (a,a/3) coordinate (F) ;
\draw (0,2*a/3) coordinate (G) ;
\draw (a/2,2*a/3) coordinate (H) ;
\draw (a,2*a/3) coordinate (I) ;
\draw (0,a) coordinate (J) ;
\draw (a/2,a) coordinate (K) ;
\draw (a,a) coordinate (L) ;

\draw (A) -- (B) ;
\draw (B) -- (C) ;
\draw (D) -- (E) ;
\draw (E) -- (F) ;
\draw (G) -- (H) ;
\draw (H) -- (I) ;
\draw (J) -- (K) ;
\draw (K) -- (L) ;
\draw (A) -- (D) ;
\draw (B) -- (E) ;
\draw (C) -- (F) ;
\draw (D) -- (G) ;
\draw (E) -- (H) ;
\draw (F) -- (I) ;
\draw (G) -- (J) ;
\draw (H) -- (K) ;
\draw (I) -- (L) ;

\draw (a/4,5*a/6) node{\Large 1} ;
\draw (3*a/4,5*a/6) node{\Large 2} ;
\draw (a/4,a/2) node{\Large 3} ;
\draw (3*a/4,a/2) node{\Large 4} ;
\draw (a/4,a/6) node{\Large 5} ;
\draw (3*a/4,a/6) node{\Large 6} ;

\draw[decorate,decoration={brace,raise=3pt,amplitude=4pt}] (A) -- (D) node[midway,xshift=-0.6cm]{\Large $\frac{1}{3}$} ;
\draw[decorate,decoration={brace,raise=3pt,amplitude=4pt}] (D) -- (G) node[midway,xshift=-0.6cm]{\Large $\frac{1}{3}$} ;
\draw[decorate,decoration={brace,raise=3pt,amplitude=4pt}] (G) -- (J) node[midway,xshift=-0.6cm]{\Large $\frac{1}{3}$} ;
\draw[decorate,decoration={brace,raise=3pt,amplitude=4pt,mirror}] (A) -- (B) node[midway,yshift=-0.6cm]{\Large $\frac{1}{2}$} ;
\draw[decorate,decoration={brace,raise=3pt,amplitude=4pt,mirror}] (B) --(C) node[midway,yshift=-0.6cm]{\Large $\frac{1}{2}$} ;
\end{tikzpicture}
\end{center}

Now we show $g^*(6) \geq \frac{5}{6}$. Consider any solution $a,b : \mathcal{P}([6]) \rightarrow [0,1]$ satisfying constraints (1), (2), (3), and (5). We may assume $a_i+b_i \leq \frac{5}{6}$ for all $i \in [6]$. By Lemma 5, if $|R| \leq 1$ then $a(R) = b(R) = 0$, as $a_i  + b_i \leq \frac{5}{6} < \frac{1}{1}$ for all $i \in [6]$. \\

We claim there do not exist $R_1,R_2 \subseteq [6]$ such that $a(R_1), b(R_2) > 0$ and $|R_1|=|R_2| = 2$. Assume otherwise for the sake of contradiction. By constraint (5), we have $R_1 \cap R_2 \neq \{\}$, so we may without loss of generality assume $1 \in R_1 \cap R_2$. If without loss of generality $R_1 = R_2 = \{1,2\}$, then we have $a(\{1,2\}),b(\{1,2\}) > 0$ which implies $b_1+b_2 \geq 1$ and $a_1+a_2 \geq 1$ respectively, so that $(a_1+b_1) + (a_2+b_2) \geq 2$ and thus for some $i \in [2]$ we have $a_i+b_i \geq 1$, contradicting $a_i+b_i \leq \frac{5}{6}$. Now without loss of generality assume $R_1 = \{1,2\}$ and $R_2 = \{1,3\}$ so that $a(\{1,2\}) > 0$ and $b(\{1,3\}) > 0$ which implies $b_1 + b_2 \geq 1$ and $a_1 + a_3 \geq 1$ respectively. Apply Lemma 6 with $I = \{1\}, O_1 = \{2\}, O_2 = \{3\}$ and $k = \frac{5}{6}$. We just need to check $\frac{5}{6} < \frac{2}{1+1}$, which is true. Thus Lemma 6 says that $a_1b_1 + a_2b_2 + a_3b_3 \leq 0.47$. Thus we have $ 3 \cdot (\frac{1}{2} \cdot \frac{5}{6})^2 \geq a_4b_4 + a_5b_5 + a_6b_6 \geq 1-0.47 = 0.53$, a contradiction. \\

Thus we may without loss of generality assume if $a(R) > 0$, then $|R| \geq 3$. Thus

\begin{align*}
\sum_{i \in [6]} a_i + b_i &= \sum_{R \subseteq [r]} |R| \cdot a(R) + |R| \cdot b(R) \\
&= \left( \sum_{R \subseteq [r], |R| \geq 3} |R| \cdot a(R) \right) + \left( \sum_{R \subseteq [r], |R| \geq 2} |R| \cdot b(R) \right) \\
&\geq \left( \sum_{R \subseteq [r], |R| \geq 3} 3 \cdot a(R) \right) + \left( \sum_{R \subseteq [r], |R| \geq 2} 2 \cdot b(R) \right) = 5.
\end{align*}

\noindent
This implies $a_i + b_i \geq \frac{5}{6}$ for some $i \in [6]$, as required.
\end{proof}

\begin{lemma}
Let $c_i, d_i \in [0,1]$ where $i \in [s]$, $k \in (0,1]$, and $[s] = I \sqcup O_1 \sqcup O_2$ where $|O_1| = |O_2| \geq 1$. If $c_i + d_i \leq k < \frac{2}{|I| + |O_1|}$ for all $i \in [s]$ and 

$$\sum_{i \in I \sqcup O_1} c_i \geq 1$$ 
$$\sum_{i \in I \sqcup O_2} c_i \geq 1.$$

\noindent
Let $e_1 = \frac{2+|I|k}{2|I|+|O_1|} - \frac{k}{2}$ and $e_2 = \frac{1+|I|\frac{k}{2}}{2|I| + |O_1|}$. Then we have 

$$\sum_{i \in [s]} c_id_i \leq |I| \cdot e_1(k-e_1) + 2|O_1| \cdot e_2(k - e_2).$$
\end{lemma}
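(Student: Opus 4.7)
The plan is to mirror the template of Lemma 6, modified for the present situation where both summation constraints involve $c_i$. As in Lemma 6, the maximum of $\sum_{i \in [s]} c_i d_i$ over the feasible region exists by compactness and continuity. Since $d_i$ does not appear in either summation constraint, I may assume $c_i + d_i = k$ for all $i$ (otherwise raising $d_i$ to $k - c_i$ only increases $c_i d_i$), and thereby reduce to maximizing
$$f(c_1,\ldots,c_s) = \sum_{i \in [s]} c_i(k-c_i)$$
subject to $\sum_{i \in I \sqcup O_1} c_i \geq 1$, $\sum_{i \in I \sqcup O_2} c_i \geq 1$, and $c_i \in [0,k]$.

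Next I want to reduce to equality constraints at the maximum. If $\sum_{i \in I \sqcup O_1} c_i > 1$, then since the average exceeds $\tfrac{1}{|I|+|O_1|} > \tfrac{k}{2}$ (using the hypothesis $k < \tfrac{2}{|I|+|O_1|}$), some $c_j > \tfrac{k}{2}$ with $j \in I \sqcup O_1$. Because $g(x) = x(k-x)$ is concave with maximum at $\tfrac{k}{2}$, decreasing $c_j$ strictly increases $f$. If $j \in O_1$ this decrease is immediate; if $j \in I$, I perform a swap $c_j \to c_j - \varepsilon$, $c_l \to c_l + \varepsilon$ for some $l \in O_1$ with $c_l < c_j$, which preserves the first constraint exactly and perturbs the second only slightly (absorbed by its slack). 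The symmetric argument handles the second constraint. Equivalently, the KKT conditions rule out the slack case: if $\mu_1 = 0$ then $c_i = \tfrac{k}{2}$ for $i \in O_1$ and the equation for the second constraint forces $\mu_2 = k - \tfrac{2}{|I|+|O_2|} < 0$, contradicting $\mu_2 \geq 0$. So at the interior maximum both constraints are tight.

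Having reduced to the equality constraints $\sum_{i \in I \sqcup O_1} c_i = 1 = \sum_{i \in I \sqcup O_2} c_i$, I apply Lagrange multipliers. Stationarity gives $k - 2c_i = \lambda_1 + \lambda_2$ for $i \in I$, $k - 2c_i = \lambda_1$ for $i \in O_1$, and $k - 2c_i = \lambda_2$ for $i \in O_2$, so $c_i$ is constant on each of $I, O_1, O_2$; call these values $\alpha, \beta_1, \beta_2$. Subtracting the two equality constraints and using $|O_1| = |O_2|$ yields $\beta_1 = \beta_2 =: \beta$, and hence $\lambda_1 = \lambda_2 = k - 2\beta$, which combined with $k - 2\alpha = \lambda_1 + \lambda_2$ gives $\alpha = 2\beta - \tfrac{k}{2}$. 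Plugging into $|I|\alpha + |O_1|\beta = 1$ and solving yields $\beta = \tfrac{1 + |I|k/2}{2|I|+|O_1|} = e_2$ and $\alpha = e_1$. Evaluating $f$ at this stationary point gives $|I| \cdot e_1(k-e_1) + 2|O_1| \cdot e_2(k-e_2)$, as claimed.

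The main obstacle is the equality-constraint reduction: unlike Lemma 6, where decreasing $c_j$ for $j \in I$ increases the $d$-side sum and thus aids the second constraint, here decreasing any $c_j$ for $j \in I$ strictly harms both constraints at once. This is why the swap argument within $I \sqcup O_1$ (or the KKT shortcut, which bypasses the need to treat boundary cases individually) is essential, and why the condition $k < \tfrac{2}{|I|+|O_1|}$ is used twice—once to produce a $c_j > \tfrac{k}{2}$ from each slack constraint, and once to ensure the Lagrange multipliers $\mu_1, \mu_2$ are nonnegative.
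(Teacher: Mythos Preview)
Your proposal is correct and follows essentially the same route as the paper: reduce to $d_i = k - c_i$, argue both linear constraints bind at the maximum, then solve by Lagrange multipliers to obtain the common values $e_1$ on $I$ and $e_2$ on $O_1 \cup O_2$. Your swap move within $I \sqcup O_1$ is exactly the device the paper uses (it deploys it when tightening the second constraint after the first is already fixed), and your KKT alternative---showing $\mu_1 = 0$ forces $\mu_2 = k - \tfrac{2}{|I|+|O_2|} < 0$---is a clean shortcut for the same reduction that the paper carries out by a longer sequential perturbation argument.
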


\begin{proof}
We find an upper bound on the maximum of $\sum_{i \in [s]} c_id_i$ under the constraints above. The maximum exists because $\sum_{i \in [s]} c_id_i$ is continuous and the constrained space is compact with respect to $c_i,d_i$. We may assume $c_i + d_i = k$ for all $i \in [s]$, because if $c_j + d_j < k$ for some $j \in [s]$, then we may increase both $c_j,d_j$ so that $\sum_{i \in [s]} c_id_i$ increases. Thus we can just consider maximizing  

$$f(c_1,c_2,\ldots,c_{s}) = \sum_{i \in [s]} c_i(k-c_i).$$

We now show we can assume $\sum_{i \in I \sqcup O_1} c_i  = 1$ and $\sum_{i \in I \sqcup O_2} c_i  = 1$. We first note that the derivative of $g(x) = x(k-x)$ is $g'(x) = k-2x$, and so $g(x)$ is maximized at $x = \frac{k}{2}$. Furthermore, $g''(x) = -2$ so that if $x$ approaches $\frac{k}{2}$ we know $g(x)$ increases. If we had $\sum_{i \in I \sqcup O_1} c_i  > 1$, this would imply for some $j \in I \sqcup O_1$ we had $c_j > \frac{1}{|I| + |O_1|} > \frac{k}{2}$, where the last inequality comes from $k < \frac{2}{|I| + |O_1|}$. Thus we may decrease $c_j$ so that $f(c_1,c_2,\ldots,c_{s}) = \sum_{i \in [s]} c_i(k-c_i)$ attains a larger value while still maintaining the constraints. So let $\sum_{i \in I \sqcup O_1} c_i  = 1$ be the new constraint in place of $\sum_{i \in I \sqcup O_1} c_i  \geq 1$. \\

Now we show we may assume $\sum_{i \in I \sqcup O_2} c_i = 1$. So assume otherwise that $\sum_{i \in I \sqcup O_2} c_i > 1$. Note if for some $j \in O_2$ we had $c_j > \frac{k}{2}$, then we may decrease $c_j$ so that $f(c_1,c_2,\ldots,c_{s}) = \sum_{i \in [s]} c_i(k-c_i)$ attains a larger value while still maintaining the constraints. So assume for all $i \in O_2$, we have $c_i \leq \frac{k}{2}$. If for some $j \in O_2$ we had $c_j < \frac{k}{2}$, then we may increase $c_j$ so that $f(c_1,c_2,\ldots,c_{s}) = \sum_{i \in [s]} c_i(k-c_i)$ attains a larger value while still maintaining the constraints. So assume $c_i = \frac{k}{2}$ for all $i \in O_2$. Thus $\sum_{i \in O_2} c_i = |O_2|\frac{k}{2}$, so along with $\sum_{i \in I \sqcup O_2} c_i > 1$ we get $\sum_{i \in I} c_i > 1 - | O_2|\frac{k}{2}$. \\

Since $\sum_{i \in I \sqcup O_1} c_i = 1$ and $\sum_{i \in I} c_i > 1 - | O_2|\frac{k}{2}$, we must have 

$$\sum_{i \in O_1} c_i = 1-\sum_{i \in I} c_i < | O_2|\frac{k}{2}$$

\noindent
and thus for some $j \in O_1$ we have $c_j < \frac{|O_2|}{|O_1|} \frac{k}{2}  = \frac{k}{2}$. From $\sum_{i \in I} c_i > 1 - | O_2|\frac{k}{2}$ we also get for some $l \in I$ we have $c_l > \frac{1}{|I|} - \frac{|O_2|}{|I|}\frac{k}{2} > \frac{k}{2}$ where the last inequality can also be rearranged into $\frac{k}{2} < \frac{1}{|I|+|O_1|}$. \\

Note we can decrease $c_l$ by $\epsilon > 0$ and increase $c_j$ by $\epsilon$ so that $f(c_1,c_2,\ldots,c_{s}) = \sum_{i \in [s]} c_i(k-c_i)$ attains a larger value while still maintaining the constraints $\sum_{i \in I \sqcup O_1} c_i = 1$ and $\sum_{i \in I \sqcup O_2 } c_i \geq 1$. The constraint $\sum_{i \in I \sqcup O_1} c_i = 1$ is preserved because the $\epsilon$'s cancel out. The constraint $\sum_{i \in I \sqcup O_2 } c_i \geq 1$ can be preserved for small enough $\epsilon$ because by assumption $\sum_{i \in I \sqcup O_2 } c_i > 1$. Thus under the assumption $\sum_{i \in I \sqcup O_2 } k-c_i > 1$, we can always find a larger value of $f(c_1,c_2,\ldots,c_{s}) = \sum_{i \in [s]} c_i(k-c_i)$ while maintaining the desired constraints, so we may assume $\sum_{i \in I \sqcup O_2} c_i = 1$. \\

We claim under the constraints $\sum_{i \in I \sqcup O_1} c_i = 1$ and $\sum_{i \in I \sqcup O_2 } c_i = 1$, the function $f(c_1,c_2,\ldots,c_{s}) = \sum_{i \in [s]} c_i(k-c_i)$ has a maximum. Note we have

\begin{align*}
f(c_1,c_2,\ldots,c_{s}) &= \sum_{i \in [s]} c_i(k-c_i) = k \left( \sum_{i \in [s]} c_i \right) - \sum_{i \in [s]} c_i^2 \\
&= k \left( \sum_{i \in I \sqcup O_1} c_i + \sum_{i \in O_2} c_i \right) - \sum_{i \in [s]} c_i^2 \\ 
&= k \left( 1+ \sum_{i \in O_2} c_i \right) - \sum_{i \in [s]} c_i^2.
\end{align*}

\noindent
This shows for any $i \in I \sqcup O_1$, we have 

$$f(c_1,c_2, \ldots, c_i, \ldots, c_s) = f(c_1,c_2, \ldots, |c_i|, \ldots, c_s)$$

\noindent
and analogously 

\begin{align*}
f(c_1,c_2,\ldots,c_{s}) &= \sum_{i \in [s]} c_i(k-c_i) = k \left( \sum_{i \in [s]} c_i \right) - \sum_{i \in [s]} c_i^2 \\
&= k \left( \sum_{i \in O_1} c_i + \sum_{i \in I \sqcup O_2} c_i \right) - \sum_{i \in [s]} c_i^2 \\
&= k \left(1 +  \sum_{i \in O_1} c_i \right) - \sum_{i \in [s]} c_i^2
\end{align*}

\noindent 
which shows for any $i \in I \sqcup O_2$, we have 

$$f(c_1,c_2, \ldots, c_i, \ldots, c_s) = f(c_1,c_2, \ldots, |c_i|, \ldots, c_s).$$ 

\noindent
Thus $f(c_1,c_2,\ldots,c_s) = f(|c_1|,|c_2|,\ldots,|c_s|)$, so we may assume $c_i \geq 0$ for all $i \in [s]$. Note along with the assumption $c_i \geq 0$ for all $i \in [s]$, the constraints $\sum_{i \in I \sqcup O_1} c_i = 1$ and $\sum_{i \in I \sqcup O_2 } c_i = 1$ give rise to a compact space, and since $f$ is continuous we know $f$ attains a maximum in this compact space defined by the assumption $c_i \geq 0$ for all $i \in [s]$ with the constraints $\sum_{i \in I \sqcup O_1} c_i = 1$ and $\sum_{i \in I \sqcup O_2 } c_i = 1$, and thus $f$ attains a maximum with the constraints $\sum_{i \in I \sqcup O_1} c_i = 1$ and $\sum_{i \in I \sqcup O_2 } c_i = 1$ only. \\

We use Lagrange multipliers to maximize $f(c_1,c_2,\ldots,c_{s}) = \sum_{i \in [s]} c_i(k-c_i)$ under the constraints $\sum_{i \in I \sqcup O_1} c_i = 1$ and $\sum_{i \in I \sqcup O_2 } c_i = 1$. We have $\frac{\partial f}{\partial c_i} = k - 2c_i$ for all $i \in [s]$. Define $g_1(c_1,c_2,\ldots,c_s) = \sum_{i \in I \sqcup O_1} c_i$ and $g_2(c_1,c_2,\ldots,c_s) = \sum_{i \in I \sqcup O_2 } c_i$. So $\frac{\partial g_1}{\partial c_i} = 1$ if $i \in I \cup O_1$ and $0$ otherwise, and $\frac{\partial g_2}{\partial c_i} = 1$ if $i \in I \cup O_2$ and $0$ otherwise. Thus for some $\lambda_1,\lambda_2 \in \mathbb{R}$ we have 
$k-2c_i = \lambda_1+\lambda_2$ for all $i \in I$, and $k-2c_i = \lambda_1$ for all $i \in O_1$, and $k-2c_i = \lambda_2$ for all $i \in O_2$. This gives us for all $i,j \in I$ we have $c_i = c_j$, for all $i,j \in O_1$ we have $c_i = c_j$, and for all $i,j \in O_2$ we have $c_i = c_j$. \\

Without loss of generality, let $c_1 \in O_1, c_2 \in O_2, c_3 \in I$. We get 

$$\sum_{i \in I \sqcup O_1} c_i = \sum_{i \in I} c_i + \sum_{i \in O_1} c_i = \sum_{i \in I} c_3 + \sum_{i \in O_1} c_1 = |I|c_3 + |O_1|c_1 = 1.$$

\noindent
Likewise

$$\sum_{i \in I \sqcup O_2} c_i = \sum_{i \in I}c_i + \sum_{i \in O_2}c_i = |I|c_3 + |O_2|c_2 = 1.$$

\noindent
Thus we get $|I|c_3 + |O_1|c_1 =|I|c_3 + |O_2|c_2$, and so $|O_1|c_1 = |O_2|c_2$ so that $c_1 = c_2$. Recall also that $k-2c_3 = \lambda_1 + \lambda_2 = k-2c_1+k-2c_2  = 2k-4c_1$ and thus $c_3 = 2c_1-\frac{k}{2}$. Recall $|I|c_3 + |O_1|c_1 = 1$ and so $c_1 = \frac{1}{|O_1|}(1-|I|c_3)$, which we substitute into $c_3 = 2c_1-\frac{k}{2}$ to get 

$$c_3 = \frac{2}{|O_1|}(1-|I|c_3)-\frac{k}{2}$$

\noindent
which we re-arrange to get 

$$c_3(\frac{2|I|+|O_1|}{|O_1|}) = \frac{2}{|O_1|} - \frac{k}{2}$$

\noindent
and thus 

\begin{align*}
c_3 &= \frac{|O_1|}{2|I|+|O_1|} (\frac{2}{|O_1|} - \frac{k}{2}) = \frac{2}{2|I|+|O_1|} - \frac{|O_1|}{2|I|+|O_1|}\frac{k}{2} \\
&=  \frac{2}{2|I|+|O_1|} - \frac{k}{2} + \frac{2|I|}{2|I|+|O_1|}\frac{k}{2} = \frac{2+|I|k}{2|I|+|O_1|} - \frac{k}{2} = e_1.
\end{align*}

We have $c_1 = \frac{1}{2}(c_3 + \frac{k}{2})$ so that

$$c_1 = \frac{1}{2}( \frac{2+|I|k}{2|I|+|O_1|} - \frac{k}{2} + \frac{k}{2}) =  \frac{1+|I|\frac{k}{2}}{2|I|+|O_1|} = e_2.$$

Thus

\begin{align*}
f(c_1,c_2,\ldots,c_s) &= |I|e_1(k-e_1) + |O_1|e_2(k-e_2) + |O_2|e_2(k-e_2) \\
&= |I| \cdot e_1(k-e_1) + 2|O_1| \cdot e_2(k-e_2) 
\end{align*}

\noindent
as required. 
\end{proof}

\begin{proposition}
We have $g^*(7) = \frac{7}{9}$.
\end{proposition}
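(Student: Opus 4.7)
The plan is to prove $g^*(7)\leq\tfrac{7}{9}$ by explicit construction and $g^*(7)\geq\tfrac{7}{9}$ by extracting a contradiction from the assumption $a_i+b_i<\tfrac{7}{9}$ for every $i\in[7]$.

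For the upper bound I would construct a coloring square divided into three vertical strips of widths $\tfrac{5}{18}$, $\tfrac{5}{18}$, and $\tfrac{4}{9}$. The first two strips are each split in half horizontally (carrying the pairs of colors $\{1,2\}$ and $\{3,4\}$ respectively), while the third is split into three equal horizontal thirds (carrying $5$, $6$, $7$). A direct computation gives $a_i+b_i = \tfrac{5}{18}+\tfrac{1}{2} = \tfrac{7}{9}$ for $i\in\{1,2,3,4\}$ and $a_i+b_i = \tfrac{4}{9}+\tfrac{1}{3} = \tfrac{7}{9}$ for $i\in\{5,6,7\}$, so this yields $g^*(7)\leq\tfrac{7}{9}$.

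For the lower bound, suppose for contradiction that $a_i+b_i<\tfrac{7}{9}$ for every $i\in[7]$. Lemma 5 forces $|R|\geq 2$ for every $R$ with $a(R)>0$ or $b(R)>0$, and if every such $R$ had $|R|\geq 3$ then
$$\sum_{i\in[7]}(a_i+b_i)=\sum_{R\subseteq[7]}|R|\bigl(a(R)+b(R)\bigr)\geq 3\cdot 2=6,$$
so some $a_i+b_i\geq\tfrac{6}{7}>\tfrac{7}{9}$, a contradiction. Hence some 2-element set has either $a$- or $b$-value positive; by the symmetry between the two sides, I may take $a(\{1,2\})>0$, which gives $b_1+b_2\geq 1$ via constraint (5).

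The remainder is a case analysis on which other 2-element sets can have positive $a$- or $b$-value. Any $\{i,j\}$ with $b(\{i,j\})>0$ must meet $\{1,2\}$ by constraint (5); the case $\{i,j\}=\{1,2\}$ gives $(a_1+b_1)+(a_2+b_2)\geq 2$ directly, and for a cross-intersecting case like $b(\{1,3\})>0$ I would apply Lemma 6 with $I=\{1\}$, $O_1=\{3\}$, $O_2=\{2\}$, $k=\tfrac{7}{9}$ to bound $a_1b_1+a_2b_2+a_3b_3\leq\tfrac{115}{324}$; combined with the trivial bound $a_jb_j\leq(\tfrac{7}{18})^2=\tfrac{49}{324}$ on the four remaining colors, this gives $\sum_i a_ib_i\leq\tfrac{311}{324}<1$, contradicting Lemma 1. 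Hence no 2-element $b$-set has positive value, so every row-set has $|R|\geq 3$ and $\sum_i b_i\geq 3$. Now classifying the 2-element $a$-sets: if $\{1,2\}$ is the only one, then $\sum_i a_i\leq\tfrac{49}{9}-3=\tfrac{22}{9}$ together with $\sum_i a_i\geq 3-a(\{1,2\})$ forces $a(\{1,2\})\geq\tfrac{5}{9}$, so $a_1,a_2\geq\tfrac{5}{9}$ and therefore $b_1+b_2\leq\tfrac{4}{9}<1$, contradicting $b_1+b_2\geq 1$. Otherwise a second 2-element $a$-set exists; the disjoint case is handled by two applications of Lemma 3 together with the $(7/18)^2$ bound on the remaining three colors (and three pairwise disjoint 2-element $a$-sets are immediately ruled out since $3\cdot\tfrac{5}{18}+\tfrac{49}{324}<1$), while overlapping pairs fall under Lemma 7. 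The main obstacle is that Lemma 1 is very nearly tight when $r=7$ (the maximum $7\cdot(\tfrac{7}{18})^2=\tfrac{343}{324}$ only barely exceeds $1$), so each subcase must be handled with a carefully chosen index decomposition so that the bounds from Lemmas 3, 6, and 7 drive the total strictly below $1$.
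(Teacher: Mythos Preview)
Your construction and the opening moves of the lower bound are fine and match the paper: the reduction to $|R|\ge 2$, the exclusion of any $2$-element $b$-set via Lemma~6 (your numbers $\tfrac{115}{324}+4\cdot\tfrac{49}{324}=\tfrac{311}{324}<1$ are correct), and the case where $\{1,2\}$ is the \emph{only} $2$-element $a$-set are all handled correctly.

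The genuine gap is in the ``second $2$-element $a$-set'' cases, where your proposed bounds do \emph{not} drop $\sum_i a_ib_i$ below~$1$:
\begin{itemize}
\item \textbf{Disjoint pair} $a(\{1,2\}),a(\{3,4\})>0$: Lemma~3 twice gives $a_1b_1+\cdots+a_4b_4\le 2(\tfrac{7}{9}-\tfrac{1}{2})=\tfrac{5}{9}=\tfrac{180}{324}$, and the trivial bound on the remaining three colors gives $3\cdot\tfrac{49}{324}=\tfrac{147}{324}$; the total is $\tfrac{327}{324}>1$, so no contradiction. (Your parenthetical about \emph{three} pairwise disjoint $2$-sets is fine, but that is not the case you need.)
\item \textbf{Overlapping pair} $a(\{1,2\}),a(\{1,3\})>0$: Lemma~7 with $I=\{1\}$, $O_1=\{2\}$, $O_2=\{3\}$ gives $a_1b_1+a_2b_2+a_3b_3\le \tfrac{409}{972}$, and the trivial bound on the remaining four colors gives $4\cdot\tfrac{49}{324}=\tfrac{588}{972}$; the total is $\tfrac{997}{972}>1$, again no contradiction.
\end{itemize}
So neither branch is actually closed by the lemmas you invoke. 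This is exactly the delicate spot you flagged (``Lemma~1 is very nearly tight when $r=7$''), and it cannot be finessed by a single clean decomposition. The paper's proof from this point is substantially longer: starting from $a(\{1,2\}),a(\{3,4\})>0$ it proves two structural claims about which further $2$-element $a$-sets can appear, using Corollary~1 (the quantitative bound $\sum_{i\in S}\max\{a_i,b_i\}\le s(\tfrac{k}{2}+\sqrt{\tfrac{rk^2}{4s}-\tfrac{1}{s}})$) in several places, together with Lemmas~6 and~7 applied to carefully chosen index sets, and the counting bound $\sum_{|R|=2}a(R)\ge\tfrac{5}{9}$. Only after that case analysis does the argument terminate. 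Your outline would need to incorporate Corollary~1 and a comparable multi-case analysis to close these branches.
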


\begin{proof}
To see that $g^*(7) \leq \frac{7}{9}$, consider \\

\begin{center}
\begin{tikzpicture}[declare function={a=4;}]

\draw (0,0) coordinate (A) ;
\draw (5*a/18,0) coordinate (B) ;
\draw (5*a/9,0) coordinate (C) ;
\draw (0,a/2) coordinate (D) ;
\draw (5*a/18,a/2) coordinate (E) ;
\draw (5*a/9,a/2) coordinate (F) ;
\draw (0,a) coordinate (G) ;
\draw (5*a/18,a) coordinate (H) ;
\draw (5*a/9,a) coordinate (I) ;
\draw (5*a/9,a/3) coordinate (J) ;
\draw (5*a/9,2*a/3) coordinate (K) ;
\draw (a,0) coordinate (L) ;
\draw (a,a/3) coordinate (M) ;
\draw (a,2*a/3) coordinate (N) ;
\draw (a,a) coordinate (O) ;

\draw (A) -- (B) ;
\draw (B) -- (C) ;
\draw (D) -- (E) ;
\draw (E) -- (F) ;
\draw (G) -- (H) ;
\draw (H) -- (I) ;
\draw (A) -- (D) ;
\draw (D) -- (G) ;
\draw (B) -- (E) ;
\draw (E) -- (H) ;
\draw (C) -- (F) ;
\draw (F) -- (I) ;
\draw (C) -- (L) ;
\draw (J) -- (M) ;
\draw (K) -- (N) ;
\draw (I) -- (O) ;
\draw (L) -- (M) ;
\draw (M) -- (N) ;
\draw (N) -- (O) ;

\draw (5*a/36,3*a/4) node{\Large 1} ;
\draw (15*a/36,3*a/4) node{\Large 2} ;
\draw (5*a/36,a/4) node{\Large 3} ;
\draw (15*a/36,a/4) node{\Large 4} ;
\draw (7*a/9,5*a/6) node{\Large 5} ;
\draw (7*a/9,a/2) node{\Large 6} ;
\draw (7*a/9,a/6) node{\Large 7} ;

\draw[decorate,decoration={brace,raise=3pt,amplitude=4pt}] (A) -- (D) node[midway,xshift=-0.6cm]{\Large $\frac{1}{2}$} ;
\draw[decorate,decoration={brace,raise=3pt,amplitude=4pt}] (D) -- (G) node[midway,xshift=-0.6cm]{\Large $\frac{1}{2}$} ;
\draw[decorate,decoration={brace,raise=3pt,amplitude=4pt,mirror}] (A) -- (B) node[midway,yshift=-0.6cm]{\Large $\frac{5}{18}$} ;
\draw[decorate,decoration={brace,raise=3pt,amplitude=4pt,mirror}] (B) -- (C) node[midway,yshift=-0.6cm]{\Large $\frac{5}{18}$} ;
\draw[decorate,decoration={brace,raise=3pt,amplitude=4pt,mirror}] (C) -- (L)  node[midway,yshift=-0.6cm]{\Large $\frac{4}{9}$} ;
\draw[decorate,decoration={brace,raise=3pt,amplitude=4pt,mirror}] (L) -- (M) node[midway,xshift=0.6cm]{\Large $\frac{1}{3}$} ;
\draw[decorate,decoration={brace,raise=3pt,amplitude=4pt,mirror}] (M) -- (N) node[midway,xshift=0.6cm]{\Large $\frac{1}{3}$} ;
\draw[decorate,decoration={brace,raise=3pt,amplitude=4pt,mirror}] (N) -- (O) node[midway,xshift=0.6cm] {\Large $\frac{1}{3}$} ;
\end{tikzpicture}
\end{center}

Now we show $g^*(7) \geq \frac{7}{9}$. Consider any solution $a,b : \mathcal{P}([7]) \rightarrow [0,1]$ satisfying constraints (1), (2), (3), and (5). We may assume $a_i + b_i \leq \frac{7}{9}$ for all $i \in [7]$. By Lemma 5 we have that if $|R| \leq 1$ then $a(R) = b(R) = 0$, since $a_i + b_i \leq \frac{7}{9} < \frac{1}{1}$ for all $i \in [7]$. \\

We claim there do not exist $R_1,R_2 \subseteq [7]$ such that $a(R_1), b(R_2) > 0$ and $|R_1|=|R_2| = 2$. Assume otherwise for the sake of contradiction. By constraint (5), we have $R_1 \cap R_2 \neq \{\}$, so we may without loss of generality assume $1 \in R_1 \cap R_2$. If without loss of generality $R_1 = R_2 = \{1,2\}$, then we have $a(\{1,2\}),b(\{1,2\}) > 0$ which implies $b_1+b_2 \geq 1$ and $a_1+a_2 \geq 1$ respectively, so that $(a_1+b_1) + (a_2+b_2) \geq 2$ and thus for some $i \in [2]$ we have $a_i+b_i \geq 1$, contradicting $a_i+b_i \leq \frac{7}{9}$. Now without loss of generality assume $R_1 = \{1,2\}$ and $R_2 = \{1,3\}$ so that $a(\{1,2\}) > 0$ and $b(\{1,3\}) > 0$ which implies $b_1 + b_2 \geq 1$ and $a_1 + a_3 \geq 1$ respectively. Apply Lemma 6 with $I = \{1\}, O_1 = \{2\}, O_2 = \{3\}$ and $k = \frac{7}{9}$. We just need to check $\frac{7}{9} < \frac{2}{1+1}$ which is true. Thus Lemma 6 says that $a_1b_1 + a_2b_2 + a_3b_3 \leq 0.36$. Thus we have $ 3 \cdot (\frac{1}{2} \cdot \frac{7}{9})^2 \geq a_4b_4 + a_5b_5 + a_6b_6 \geq 1-0.36 = 0.64$, a contradiction. \\

So without loss of generality assume if $b(R) > 0$, then $|R| \geq 3$, so $\sum_{i \in [7]} b_i = \sum_{R \subseteq [7]} b(R) \geq 3$, and thus since $\sum_{i \in [7]} a_i + b_i \leq 7 \cdot \frac{7}{9} = \frac{49}{9}$, we must have that 

$$\sum_{R \subseteq [7]} |R| \cdot a(R) = \sum_{i \in [7]} a_i \leq \frac{49}{9} - 3 = \frac{22}{9} = 2+ \frac{4}{9}.$$

\noindent
Thus

\begin{align*}
2+ \frac{4}{9} \geq \sum_{R \subseteq [7]} |R| \cdot a(R) &= \left( \sum_{R \subseteq [7], |R| = 2} |R| \cdot a(R) \right) + \left( \sum_{R \subseteq [7], |R| \geq 3} |R| \cdot a(R) \right) \\
&\geq 2\left( \sum_{R \subseteq [7], |R| = 2} a(R) \right) + 3\left( \sum_{R \subseteq [7], |R| \geq 3} a(R) \right) \\
&= 2 + \left( \sum_{R \subseteq [7], |R| \geq 3} a(R) \right)
\end{align*}

\noindent
so that $\sum_{R \subseteq [7], |R| \geq 3} a(R) \leq \frac{4}{9}$ and so $\sum_{R \subseteq [7], |R| = 2} a(R) \geq 1- \frac{4}{9} = \frac{5}{9}$. So without loss of generality assume $a(\{1,2\}) > 0$ so that $b_1 + b_2 \geq 1$. Since $(a_1 + b_1) + (a_2+b_2) \leq \frac{14}{9}$, we have $a_1 + a_2 \leq \frac{5}{9}$. Note since $a(\{1,2\}) > 0$, we have 

$$\sum_{R \in \{1\}^{\uparrow} \cup \{2\}^{\uparrow}} a(R) \leq a_1 + a_2 - a(\{1,2\}) < \frac{5}{9}.$$

\noindent
Recall $\sum_{R \subseteq [7], |R| = 2} a(R) \geq \frac{5}{9}$, and thus we may without loss of generality assume that $a(\{3,4\}) > 0$ as well. Thus we get that $b_3 + b_4 \geq 1$. \\

\textbf{Claim 1}: There can't exist $i,j \in \{5,6,7\}, i \neq j$ such that $i \in R_1$, $j \in R_2$, $a(R_1),a(R_2) > 0$, and $|R_1| = |R_2| = 2$. Without loss of generality, assume $i = 5, j = 6$. \\

\textbf{Case 1}: $R_1 = R_2 = \{5,6\}$. $a(\{5,6\}) > 0$ would imply $b_5 + b_6 \geq 1$. Thus we have $\sum_{i \in [6]} b_i \geq 3 > 2.93$. This contradicts Corollary 1. \\

\textbf{Case 2}: $R_1 = \{1,5\}, R_2 = \{1,6\}$. Let $P \subseteq \mathcal{P}([7])$ be defined as 

$$P = \{R \subseteq [7] : 2,5,6 \in R, \ 3 \in R \ \vee \ 4 \in R \}.$$

From $a(\{1,2\}) > 0, a(\{1,5\}) > 0, a(\{1,6\}) > 0$, we have that if $1 \not \in R$ and $b(R) > 0$, we must have $2,5,6 \in R$ by constraint (5). Also since $a(\{3,4\}) > 0$, we have $b(R) > 0$ implies $3 \in R \ \vee \ 4  \in R$ by constraint (5) as well. Thus if $1 \not \in R$ and $b(R) > 0$, we have $R \in P$. Also note that by Corollary 1 we have $b_1 \leq 0.6311$. This implies that 

$$\sum_{R \subseteq [7], \ |R| \geq 4} b(R) \geq \sum_{R \in P} b(R) \geq \sum_{R \subseteq [7], \ 1 \not\in R} b(R) \geq 1 - 0.6311 = 0.3689.$$

\noindent
Recall that $b(R) > 0$ implies $|R| \geq 3$, thus we get 

\begin{align*}
\sum_{i \in [7]} b_i = \sum_{R \subseteq [7]} |R| \cdot b(R) &= \left( \sum_{R \subseteq [7], |R| = 3} |R| \cdot b(R) \right) + \left( \sum_{R \subseteq [7], |R| \geq 4} |R| \cdot b(R) \right) \\
&\geq 3 \left( \sum_{R \subseteq [7], |R| = 3} b(R) \right) + 4\left( \sum_{R \subseteq [7], |R| \geq 4} b(R) \right) \\
&= 3+\left( \sum_{R \subseteq [7], |R| \geq 4} b(R) \right) \\
&\geq 3 + 0.3689  > 3.363 
\end{align*}

\noindent
which contradicts Corollary 1. \\

\textbf{Case 3}: $R_1 = \{1,5\}, R_2 = \{2,6\}$. This implies that if $b(R) > 0$, and $1 \not \in R$ we must have $5 \in R$ by constraint (5). Likewise, $b(R) > 0, \ 2 \not \in R$ implies $6 \in R$. This implies that $b_5 + b_6 \geq 1 - b'(\{1,2\}^{\uparrow})$. Furthermore, recall that $a(\{1,2\}) > 0$ so that $b(R) > 0, \ 2 \not\in R$ implies $1 \in R$, and thus $ b'(\{1\}^{\uparrow} \setminus \{2\}^{\uparrow}) + b'(\{2\}^{\uparrow}) = 1$. Thus

$$b_1+b_2+b_5+b_6 \geq  b'(\{1\}^{\uparrow}) + b'(\{2\}^{\uparrow}) + 1 - b'(\{1,2\}^{\uparrow}) $$

$$=  b'(\{1\}^{\uparrow} \setminus \{1,2\}^{\uparrow}) + b'(\{2\}^{\uparrow}) + 1  = b'(\{1\}^{\uparrow} \setminus \{2\}^{\uparrow}) + b'(\{2\}^{\uparrow}) + 1 = 2$$

\noindent
where the second to last equality comes from $\{1\}^{\uparrow} \setminus \{2\}^{\uparrow} = \{1\}^{\uparrow} \setminus \{1,2\}^{\uparrow}$. Recall we also had $b_3 + b_4 \geq 1$, and thus $\sum_{i \in [6]} b_i \geq 3$, contradicting Corollary 1. \\

\textbf{Case 4}: $R_1 = \{1,5\}, R_2 = \{3,6\}$. Note $a(\{1,5\}) > 0$ implies $b_1 + b_5 \geq 1$. Recall also that $b_1 + b_2 \geq 1$. We apply Lemma 7 with $I = \{1\}, O_1 = \{2\}, O_2 = \{5\}$ with $k = \frac{7}{9} < \frac{2}{1+1}$. Thus we get $a_1b_1 + a_2b_2 + a_5b_5 \leq  \frac{409}{972}$. One can also show $a_3b_3 + a_4b_4 + a_6b_6 \leq \frac{409}{972}$, because it satisfies similar constraints from $b(\{3,6\}) > 0$ so that $b_3 + b_6 \geq 1$ and recall $b_3 + b_4 \geq 1$. Thus $\sum_{i \in [6]} a_ib_i \leq \frac{818}{972}$. This implies we must have $a_7b_7 \geq 1-\frac{818}{972} = \frac{154}{972} > (\frac{1}{2} \cdot \frac{7}{9})^2 \geq a_7b_7$, a contradiction. \\

Other cases are analogous to the four cases described above, so the claim is proven. We may without loss of generality assume that if $a(R) > 0$ and $|R| = 2$, then $R \subseteq [5]$, so that if $a(R) > 0$  and $6 \in R \ \vee \  7 \in R$ then $|R| \geq 3$. \\

\textbf{Claim 2:} there can't exist $i \in \{5,6,7\}$ such that $i \in R$, $|R| = 2$, and $a(R) > 0$. Assume otherwise, and without loss of generality let $i = 5$, and $R = \{1,5\}$. Thus $b_1 + b_5 \geq 1$ since $a(\{1,5\}) > 0$. By Corollary 1, we have $b_1 \leq 0.64$ so that $b_5 \geq 0.36$. Recall $b_1 + b_2 \geq 1, \ b_3 + b_4 \geq 1$, so that we get $\sum_{i \in [5]} b_i \geq 2.36$. Thus $\sum_{i \in [5]} a_i \leq 5 \cdot \frac{7}{9} - 2.36$. \\

 By Claim 1, we also have $a(R) > 0$ and $|R| = 2$ implies $R \subseteq [5]$, so that 

$$\sum_{R \subseteq [7], \ |R| = 2} a(R) = \sum_{R \subseteq [5], \ |R| = 2} a(R).$$

\noindent
We note that if $R \subseteq [7], \ |R| \geq 3$, then for some $i \in [5]$ we have $i \in R$. Recall $\sum_{R \subseteq [7], \ |R| = 2} a(R) \geq \frac{5}{9}$. This means 

\begin{align*}
\sum_{i \in [5]} a_i &\geq 2 \left( \sum_{R \subseteq [5], \ |R| = 2} a(R) \right) + \left( \sum_{R \subseteq [7], \ |R| \geq 3} a(R) \right) \\
&= \left( \sum_{R \subseteq [5], \ |R| = 2} a(R) \right) + \left( \sum_{R \subseteq [7], \ |R| = 2} a(R) + \sum_{R \subseteq [7], \ |R| \geq 3} a(R) \right) \\
&= \left( \sum_{R \subseteq [5], \ |R| = 2} a(R) \right) + 1 \geq \frac{5}{9} + 1 =\frac{14}{9}.
\end{align*}

\noindent
But then we have $5 \cdot \frac{7}{9} - 2.36 \geq \sum_{i \in [5]} a_i \geq \frac{14}{9}$, a contradiction. Thus Claim 2 is proven. In particular, Claim 2 says that if $R \subseteq [7], \ a(R) > 0$ and $|R| = 2$, then $R \subseteq [4]$. So $\sum_{R \subseteq [7], \ |R| = 2} a(R) = \sum_{R \subseteq [4], \ |R| = 2} a(R) \geq \frac{5}{9}$. But note that since $b_1 + b_2 \geq 1, \ b_3 + b_4 \geq 1$, we have that $\sum_{i \in [4]} a_i \leq 4 \cdot \frac{7}{9} - 2 = \frac{10}{9}$. Note

$$\frac{10}{9} \geq \sum_{i\in [4]} a_i \geq \sum_{R \subseteq [4], \ |R| = 2} 2 \cdot a(R) \geq \frac{10}{9}.$$

In particular, this implies $\sum_{i\in [4]} a_i = \frac{10}{9}$. Along with the equations $b_1 + b_2 \geq 1, \ b_3 + b_4 \geq 1$, we get $\sum_{i \in [4]} a_i + b_i \geq \frac{10}{9} + 2 = \frac{28}{9}$, so that for some $i \in [4]$ we know $a_i + b_i \geq \frac{1}{4} \cdot \frac{28}{9} = \frac{7}{9}$, as required. 
\end{proof}

\begin{proposition}
We have $g^*(8) = \frac{13}{18}$.
\end{proposition}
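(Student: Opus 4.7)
The plan is to establish $g^*(8)\leq \frac{13}{18}$ by an explicit construction and $g^*(8)\geq \frac{13}{18}$ by a case analysis in the style of Propositions 4 and 5.

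For the upper bound, I would exhibit a coloring $\chi:[0,1]^2\to [8]$ that splits the unit square into two horizontal bands. The bottom band $[0,1]\times[0,2/9]$ is cut vertically in half, giving colors $7$ and $8$ (each with $a=1/2$, $b=2/9$, so $a+b=13/18$). The top band $[0,1]\times[2/9,1]$ of height $7/9$ is partitioned into a $3\times 2$ grid with three vertical strips of width $1/3$ and two horizontal sub-bands of height $7/18$, yielding six cells of size $1/3\times 7/18$ with colors $1,\ldots,6$ (each with $a=1/3$, $b=7/18$, so $a+b=13/18$). One checks the columns take color sets $\{1,4,7\},\{2,5,7\},\{2,5,8\},\{3,6,8\}$ and the rows take $\{1,2,3\},\{4,5,6\},\{7,8\}$, so constraint (5) holds and every color attains exactly $a_i+b_i=13/18$.

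For the lower bound I would assume there is a solution $a,b$ with $a_i+b_i\leq 13/18$ for all $i\in[8]$ and derive a contradiction. By Lemma 5 with $t=1$, $a(R)=b(R)=0$ whenever $|R|\leq 1$. The first main step is to rule out having both $a(R_1)>0$ and $b(R_2)>0$ with $|R_1|=|R_2|=2$: constraint (5) forces $R_1\cap R_2\neq\emptyset$, and either $R_1=R_2$ (forcing some $a_i+b_i\geq 1$, impossible), or $|R_1\cap R_2|=1$, in which case Lemma 6 applied with $I=\{1\}$, $O_1=\{2\}$, $O_2=\{3\}$, $k=13/18<1$ gives $a_1b_1+a_2b_2+a_3b_3\leq 307/1296$, so $\sum_{i\geq 4}a_ib_i\geq 989/1296$; but each term is at most $(13/36)^2=169/1296$ and $5\cdot 169/1296=845/1296<989/1296$, contradicting Lemma 1.

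Thus without loss of generality $b(R)>0\Rightarrow |R|\geq 3$, giving $\sum_i b_i\geq 3$ and $\sum_i a_i\leq 8\cdot\frac{13}{18}-3=\frac{25}{9}$; combined with $\sum_i a_i=2\sum_{|R|=2}a(R)+\sum_{|R|\geq 3}|R|a(R)\geq 2+\sum_{|R|\geq 3}a(R)$ this yields $\sum_{|R|=2}a(R)\geq 2/9$. Choosing WLOG $a(\{1,2\})>0$ gives $b_1+b_2\geq 1$ and $a_1+a_2\leq 4/9$. From here I would mirror Claims 1 and 2 of Proposition 5, classifying the other 2-element sets $R$ with $a(R)>0$ by how they overlap $\{1,2\}$ and by their intersections among $\{3,\ldots,8\}$; in each case I would apply Lemma 6, Lemma 7, or Corollary 1 to force $\sum_i a_ib_i<1$, contradicting Lemma 1. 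A final bookkeeping step combining $a_1+a_2\leq 4/9$, the other deduced $a$-sums, and the global bound $\sum_i b_i\geq 3$ pins down enough of the structure to exhibit some $i$ with $a_i+b_i\geq 13/18$.

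The main obstacle is the final case analysis: since $13/18\approx 0.722$ only slightly exceeds the AM-GM floor $2/\sqrt{8}\approx 0.707$, the area contradictions in each case are numerically tight and there are more structural subcases to check than in Proposition 5. As in that proof, the bulk of the effort is eliminating configurations where two disjoint pairs such as $a(\{1,2\})$ and $a(\{3,4\})$ are simultaneously positive, since these force large $b_i+b_j$ sums that immediately overload the budget implied by $\sum_i(a_i+b_i)\leq 8\cdot \frac{13}{18}$.
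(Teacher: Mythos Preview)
Your outline is essentially the paper's approach, but you overestimate the difficulty of the case analysis and your final bookkeeping step points in the wrong direction.

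After establishing $\sum_{|R|=2}a(R)\geq 2/9$ and WLOG $a(\{1,2\})>0$, the paper does not need a Proposition~5--style multi-claim analysis. It simply shows that no color $i\in[8]\setminus\{1,2\}$ can lie in a $2$-element $R$ with $a(R)>0$, and this takes only two cases. If $R=\{3,4\}$ (disjoint from $\{1,2\}$), then $b_1+b_2+b_3+b_4\geq 2$, but Corollary~1 with $s=4$ caps $\sum_{i\in[4]}\max\{a_i,b_i\}$ at about $1.861$. If $R=\{1,3\}$ (overlapping $\{1,2\}$), Lemma~7 with $I=\{1\}$, $O_1=\{2\}$, $O_2=\{3\}$, $k=13/18$ gives $a_1b_1+a_2b_2+a_3b_3\leq 1321/3888$, and then $\sum_{i\geq 4}a_ib_i\geq 1-1321/3888$ exceeds $5(13/36)^2$. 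Your use of Lemma~6 in the earlier step (one $2$-set on the $a$-side, one on the $b$-side) is fine and reaches the same contradiction as the paper's Corollary~1 argument there; but for two $2$-sets on the \emph{same} side you need Lemma~7, not Lemma~6, since both constraints involve $b$.

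Once $\{1,2\}$ is shown to be the unique $2$-set with $a(R)>0$, the whole mass $\sum_{|R|=2}a(R)\geq 2/9$ sits on $a(\{1,2\})$, so $a_1,a_2\geq 2/9$ and hence $a_1+a_2\geq 4/9$. This is the inequality you actually need, not $a_1+a_2\leq 4/9$: combined with $b_1+b_2\geq 1$ it gives $(a_1+b_1)+(a_2+b_2)\geq 13/9$, so some $a_i+b_i\geq 13/18$. Your proposed final step, invoking $a_1+a_2\leq 4/9$ together with $\sum_i b_i\geq 3$, does not by itself force any single $a_i+b_i$ to be large, so as written that closing argument has a gap.
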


\begin{proof}
To see that $g^*(8) \leq \frac{13}{18}$, consider \\

\begin{center}
\begin{tikzpicture}[declare function={a=4;}]
\draw (0,0) coordinate (A) ;
\draw (7*a/18,0) coordinate (B) ;
\draw (7*a/9,0) coordinate (C) ;
\draw (0,a/3) coordinate (D) ;
\draw (7*a/18,a/3) coordinate (E) ;
\draw (7*a/9,a/3) coordinate (F) ;
\draw (0,2*a/3) coordinate (G) ;
\draw (7*a/18,2*a/3) coordinate (H) ;
\draw (7*a/9,2*a/3) coordinate (I) ;
\draw (0,a) coordinate (J) ;
\draw (7*a/18,a) coordinate (K) ;
\draw (7*a/9,a) coordinate (L) ;
\draw (7*a/9,a/2) coordinate (M) ;
\draw (a,0) coordinate (O) ;
\draw (a,a/2) coordinate (P) ;
\draw (a,a) coordinate (Q) ;

\draw (A) -- (B) ;
\draw (B) -- (C) ;
\draw (D) -- (E) ;
\draw (E) -- (F) ;
\draw (G) -- (H) ;
\draw (H) -- (I) ;
\draw (J) -- (K) ;
\draw (K) -- (L) ;
\draw (A) -- (D) ;
\draw (B) -- (E) ;
\draw (C) -- (F) ;
\draw (D) -- (G) ;
\draw (E) -- (H) ;
\draw (F) -- (I) ;
\draw (G) -- (J) ;
\draw (H) -- (K) ;
\draw (I) -- (L) ;
\draw (C) -- (O) ;
\draw (M) -- (P) ;
\draw (L) -- (Q) ;
\draw (P) -- (Q) ;
\draw (O) -- (P) ;

\draw (7*a/39,5*a/6) node{\Large 1} ;
\draw (21*a/36,5*a/6) node{\Large 2} ;
\draw (7*a/39,a/2) node{\Large 3} ;
\draw (21*a/36,a/2) node{\Large 4} ;
\draw (7*a/39,a/6) node{\Large 5} ;
\draw (21*a/36,a/6) node{\Large 6} ;
\draw (8*a/9,3*a/4) node{\Large 7} ;
\draw (8*a/9,a/4) node{\Large 8} ;

\draw[decorate,decoration={brace,raise=3pt,amplitude=4pt}] (A) -- (D) node[midway,xshift=-0.6cm]{\Large $\frac{1}{3}$} ;
\draw[decorate,decoration={brace,raise=3pt,amplitude=4pt}] (D) -- (G) node[midway,xshift=-0.6cm]{\Large $\frac{1}{3}$} ;
\draw[decorate,decoration={brace,raise=3pt,amplitude=4pt}] (G) -- (J) node[midway,xshift=-0.6cm]{\Large $\frac{1}{3}$} ;
\draw[decorate,decoration={brace,raise=3pt,amplitude=4pt,mirror}] (A) -- (B) node[midway,yshift=-0.6cm]{\Large $\frac{7}{18}$} ;
\draw[decorate,decoration={brace,raise=3pt,amplitude=4pt,mirror}] (B) -- (C) node[midway,yshift=-0.6cm]{\Large $\frac{7}{18}$} ;
\draw[decorate,decoration={brace,raise=3pt,amplitude=4pt,mirror}] (C) -- (O)  node[midway,yshift=-0.6cm]{\Large $\frac{2}{9}$} ;
\draw[decorate,decoration={brace,raise=3pt,amplitude=4pt,mirror}] (O) -- (P) node[midway,xshift=0.6cm]{\Large $\frac{1}{2}$} ;
\draw[decorate,decoration={brace,raise=3pt,amplitude=4pt,mirror}] (P) -- (Q) node[midway,xshift=0.6cm]{\Large $\frac{1}{2}$} ;

\end{tikzpicture}
\end{center}

Now we show $g^*(8) \geq \frac{13}{18}$. Consider any solution $a,b : \mathcal{P}([8]) \rightarrow [0,1]$ satisfying constraints (1), (2), (3), and (5). We may assume $a_i + b_i \leq \frac{13}{18}$ for all $i \in [8]$. By Lemma 5 we have for $|R| \leq 1$ that $a(R) = b(R) = 0$, as $a_i+b_i \leq \frac{13}{18} < \frac{1}{1}$ for all $i \in [8]$.  \\

We first show there is no $R_1,R_2 \subseteq [8]$ such that $|R_1| = |R_2| = 2$ and $a(R_1) > 0, b(R_2) > 0$. Assume for the sake of contradiction that this is not the case. If without loss of generality $R_1 = R_2 = \{1,2\}$, then we have from $a(\{1,2\}) > 0$ that $b_1 + b_2 \geq 1$, and likewise $b(\{1,2\}) > 0$ implies $a_1 + b_1 \geq 1$. This means $(a_1+b_1) + (a_2+b_2) \geq 2$ so that for some $i \in \{1,2\}$ we have $a_i+b_i \geq 1$, contradicting $a_i + b_i \leq \frac{13}{18}$ for all $i \in [8]$. Now without loss of generality assume $R_1 = \{1,2\}, R_2 = \{1,3\}$. From $a(\{1,2\}) > 0$ we have that $b_1 + b_2 \geq 1$, and from $b(\{1,3\})$ we have $a_1 + a_3 \geq 1$. So we have $a_1 + b_1 + b_2 + a_3 \geq 2$. Since $a_1 + b_1 \leq \frac{13}{18}$, we have that $b_2 + a_3 \geq 2 - \frac{13}{18} = \frac{23}{18} > 1.02$, which contradicts Corollary 1. \\

Thus we may without loss of generality assume that if $b(R) > 0$, then $|R| \geq 3$. Note

$$\sum_{R \subseteq [8]} |R|(a(R) + b(R)) = \sum_{i \in [8]} a_i + b_i \leq 8 \cdot \frac{13}{18} $$

\noindent
so that

\begin{align*}
8 \cdot \frac{13}{18} &\geq \sum_{R \subseteq [8]} (a(R) + b(R))|R| = \sum_{R \subseteq [8]} |R| \cdot a(R) \ + \sum_{R \subseteq [8], |R| \geq 3} |R| \cdot b(R) \\
&\geq \sum_{R \subseteq [8]} |R| \cdot a(R) + \sum_{R \subseteq [8], |R| \geq 3} 3 \cdot b(R)  = \sum_{R \subseteq [8]} |R| \cdot a(R) + 3
\end{align*}

\noindent
and thus $\sum_{R \subseteq [8]} |R| \cdot a(R) \leq 8 \cdot \frac{13}{18} - 3 = \frac{50}{18} = 2 + \frac{7}{9}$. In particular, this implies that 

\begin{align*}
 2 + \frac{7}{9} \geq \sum_{R \subseteq [8]} |R| \cdot a(R) &= \left( \sum_{R \subseteq [8], |R| = 2} |R| \cdot a(R) \right) + \left( \sum_{R \subseteq [8], |R| \geq 3} |R| \cdot a(R) \right) \\
 &\geq 2\left( \sum_{R \subseteq [8], |R| = 2} a(R) \right) + 3\left( \sum_{R \subseteq [8], |R| \geq 3} a(R) \right) \\
&= 2 + \left( \sum_{R \subseteq [8], |R| \geq 3} a(R) \right)
\end{align*}

\noindent
so that $\sum_{R \subseteq [8], |R| \geq 3} a(R) \leq \frac{7}{9}$ and thus $\sum_{R \subseteq [8], |R| = 2} a(R) \cdot|R| \geq \frac{2}{9}$. Without loss of generality, let $a(\{1,2\}) > 0$, so that $b_1 + b_2 \geq 1$. \\

We now show we cannot have $i \in [8] \setminus \{1,2\}$ such that $i \in R, \ |R| = 2$ and $a(R) > 0$. Assume otherwise for the sake of contradiction, and without loss of generality let $i = 3$. The first case is if without loss of generality $R = \{3,4\}$. From $a(\{3,4\}) > 0$ we have $b_3 + b_4 \geq 1$. Thus we get $b_1 + b_2 + b_3 + b_4 \geq 2 > 1,861$ which contradicts Corollary 1. The second case is if without loss of generality $R = \{1,3\}$, so that $a(\{1,3\}) > 0$ and thus $b_1 + b_3 \geq 1$. Consider the maximum value $a_1b_1 + a_2b_2 + a_3b_3$ can take on given the constraints $b_1 + b_2 \geq 1$, $b_1 + b_3 \geq 1$, and $a_i+b_i \leq \frac{13}{18}$ for $i \in [3]$. We apply Lemma 7 with $I = \{1\}, O_1 = \{2\}, O_2 = \{3\}$ with $k = \frac{13}{18} < \frac{2}{1+1}$ to get that $a_1b_1 + a_2b_2 + a_3b_3 \leq \frac{1321}{3888}$. This implies that $5 (\frac{1}{2} \cdot \frac{13}{18})^2 \geq \sum_{i \in [8] \setminus [3]} a_ib_i \geq 1 - \frac{1321}{3888}$, a contradiction. \\

Thus $a(R) > 0$ and $|R| = 2$ implies $R = \{1,2\}$. Since 

$$\sum_{R \subseteq [8], |R| = 2} |R| \cdot a(R) = a(\{1,2\}) \geq \frac{2}{9}$$

\noindent
we have $a_1, a_2 \geq \frac{2}{9}$ so that $a_1 + a_2 \geq \frac{4}{9}$. Recall also that $b_1 + b_2 \geq 1$, so that together we have $(a_1+b_1) + (a_2+b_2) \geq \frac{13}{9}$. This implies for some $i \in [2]$ we have $a_i+b_i \geq \frac{13}{18}$, as required. 
\end{proof}

\begin{center}
\section{$g^*(t^2-1) = \frac{1}{t}-\frac{1}{t^2}+\frac{1}{t-1}$}
\end{center}

Note when $t = 3$, we have $g^*(3^2-1) = g^*(8) = \frac{1}{3} - \frac{1}{3^2} + \frac{1}{3-1} = \frac{13}{18}$. The proof for $g^*(8) = \frac{13}{18}$ can be generalized. We first describe a construction showing $g^*(t^2-1) \leq \frac{1}{t}-\frac{1}{t^2}+\frac{1}{t-1}$ for all $t \geq 3$ (the equality holds true for $t = 2$ as well, but the proof for the equality fails in this case). Then we give an example for when $t = 5$. \\

For general $t \geq 3$, split the square into two rectangles each with vertical side length $1$. The rectangle on the right will have horizontal side length $\frac{1}{t} - \frac{1}{t^2}$ and the rectangle on the left will have horizontal side length $1-\frac{1}{t}+\frac{1}{t^2}$. Partition the right rectangle into $t-1$ smaller rectangles each with vertical side length $\frac{1}{t-1}$ and horizontal side length $\frac{1}{t} - \frac{1}{t^2}$. Partition the left rectangle into $(t-1)t$ smaller rectangles each with vertical side length $\frac{1}{t}$ and horizontal side length $\frac{1}{t-1}(1-\frac{1}{t}+\frac{1}{t^2}) = \frac{1}{t-1} - \frac{1}{t^2}$. \\

To see that $g^*(5^2-1) = g^*(24) \leq \frac{1}{5}-\frac{1}{5^2}+\frac{1}{5-1} = \frac{41}{100}$, note that for $t = 5$ we have $\frac{1}{t-1} = \frac{1}{4}$, $\frac{1}{t} = \frac{1}{5}$, $\frac{1}{t} - \frac{1}{t^2} = \frac{4}{25}$, and $\frac{1}{t-1}-\frac{1}{t^2} = \frac{21}{100}$ and consider \\

\begin{center}
\begin{tikzpicture}[declare function={a=5;}]

\draw (0,0) coordinate (1) ;
\draw (21*a/100,0) coordinate (2) ;
\draw (21*a/50,0) coordinate (3) ;
\draw (63*a/100,0) coordinate (4) ;
\draw (21*a/25,0) coordinate (5) ;
\draw (0,a/5) coordinate (6) ;
\draw (21*a/100,a/5) coordinate (7) ;
\draw (21*a/50,a/5) coordinate (8) ;
\draw (63*a/100,a/5) coordinate (9) ;
\draw (21*a/25,a/5) coordinate (10) ;
\draw (0,2*a/5) coordinate (11) ;
\draw (21*a/100,2*a/5) coordinate (12) ;
\draw (21*a/50,2*a/5) coordinate (13) ;
\draw (63*a/100,2*a/5) coordinate (14) ;
\draw (21*a/25,2*a/5) coordinate (15) ;
\draw (0,3*a/5) coordinate (16) ;
\draw (21*a/100,3*a/5) coordinate (17) ;
\draw (21*a/50,3*a/5) coordinate (18) ;
\draw (63*a/100,3*a/5) coordinate (19) ;
\draw (21*a/25,3*a/5) coordinate (20) ;
\draw (0,4*a/5) coordinate (21) ;
\draw (21*a/100,4*a/5) coordinate (22) ;
\draw (21*a/50,4*a/5) coordinate (23) ;
\draw (63*a/100,4*a/5) coordinate (24) ;
\draw (21*a/25,4*a/5) coordinate (25) ;
\draw (0,a) coordinate (26) ;
\draw (21*a/100,a) coordinate (27) ;
\draw (21*a/50,a) coordinate (28) ;
\draw (63*a/100,a) coordinate (29) ;
\draw (21*a/25,a) coordinate (30) ;
\draw (21*a/25,a/4) coordinate (31) ;
\draw (21*a/25,a/2) coordinate (32) ;
\draw (21*a/25,3*a/4) coordinate (33) ;
\draw (a,0) coordinate (34) ;
\draw (a,a/4) coordinate (35) ;
\draw (a,a/2) coordinate (36) ;
\draw (a,3*a/4) coordinate (37) ;
\draw (a,a) coordinate (38) ;

\draw (1) -- (2) ;
\draw (2) -- (3) ;
\draw (3) -- (4) ;
\draw (4) -- (5) ;
\draw (6) -- (7) ;
\draw (7) -- (8) ;
\draw (8) -- (9) ;
\draw (9) -- (10) ;
\draw (11) -- (12) ;
\draw (12) -- (13) ;
\draw (13) -- (14) ;
\draw (14) -- (15) ;
\draw (16) -- (17) ;
\draw (17) -- (18) ;
\draw (18) -- (19) ;
\draw (19) -- (20) ;
\draw (21) -- (22) ;
\draw (22) -- (23) ;
\draw (23) -- (24) ;
\draw (24) -- (25) ;
\draw (26) -- (27) ;
\draw (27) -- (28) ;
\draw (28) -- (29) ;
\draw (29) -- (30) ;
\draw (1) -- (6) ;
\draw (6) -- (11) ;
\draw (11) -- (16) ;
\draw (16) -- (21) ;
\draw (21) -- (26) ;
\draw (2) -- (7) ;
\draw (7) -- (12) ;
\draw (12) -- (17) ;
\draw (17) -- (22) ;
\draw (22) -- (27) ;
\draw (3) -- (8) ;
\draw (8) -- (13) ;
\draw (13) -- (18) ;
\draw (18) -- (23) ;
\draw (23) -- (28) ;
\draw (4) -- (9) ;
\draw (9) -- (14) ;
\draw (14) -- (19) ;
\draw (19) -- (24) ;
\draw (24) -- (29) ;
\draw (5) -- (10) ;
\draw (10) -- (15) ;
\draw (15) -- (20) ;
\draw (20) -- (25) ;
\draw (25) -- (30) ;
\draw (5) -- (34) ;
\draw (31) -- (35) ;
\draw (32) -- (36) ;
\draw (33) -- (37) ;
\draw (30) -- (38) ;
\draw (34) -- (35) ;
\draw (35) -- (36) ;
\draw (36) -- (37) ;
\draw (37) -- (38) ;

\coordinate (39) at ($(21)!0.5!(27)$) ;
\draw (39) node{\large 1} ;
\coordinate (40) at ($(22)!0.5!(28)$) ;
\draw (40) node{\large 2} ;
\coordinate (41) at ($(23)!0.5!(29)$) ;
\draw (41) node{\large 3} ;
\coordinate (42) at ($(24)!0.5!(30)$) ;
\draw (42) node{\large 4} ;
\coordinate (43) at ($(16)!0.5!(22)$) ;
\draw (43) node{\large 5} ; 
\coordinate (44) at ($(17)!0.5!(23)$) ;
\draw (44) node{\large 6} ;
\coordinate (45) at ($(18)!0.5!(24)$) ;
\draw (45) node{\large 7} ;
\coordinate (46) at ($(19)!0.5!(25)$) ;
\draw (46) node{\large 8} ;
\coordinate (47) at ($(11)!0.5!(17)$) ;
\draw (47) node{\large 9} ;
\coordinate (48) at ($(12)!0.5!(18)$) ;
\draw (48) node{\large 10} ;
\coordinate (49) at ($(13)!0.5!(19)$) ;
\draw (49) node{\large 11} ;
\coordinate (50) at ($(14)!0.5!(20)$) ;
\draw (50) node{\large 12} ;
\coordinate (51) at ($(6)!0.5!(12)$) ;
\draw (51) node{\large 13} ;
\coordinate (52) at ($(7)!0.5!(13)$) ;
\draw (52) node{\large 14} ;
\coordinate (53) at ($(8)!0.5!(14)$) ;
\draw (53) node{\large 15} ;
\coordinate (54) at ($(9)!0.5!(15)$) ;
\draw (54) node{\large 16} ;
\coordinate (55) at ($(1)!0.5!(7)$) ;
\draw (55) node{\large 17} ;
\coordinate (56) at ($(2)!0.5!(8)$) ;
\draw (56) node{\large 18} ;
\coordinate (57) at ($(3)!0.5!(9)$) ;
\draw (57) node{\large 19} ;
\coordinate (58) at ($(4)!0.5!(10)$) ;
\draw (58) node{\large 20} ;
\coordinate (59) at ($(33)!0.5!(38)$) ;
\draw (59) node{\large 21} ;
\coordinate (60) at ($(32)!0.5!(37)$) ;
\draw (60) node{\large 22} ;
\coordinate (61) at ($(31)!0.5!(36)$) ;
\draw (61) node{\large 23} ;
\coordinate (62) at ($(5)!0.5!(35)$) ;
\draw (62) node{\large 24} ;

\draw[decorate,decoration={brace,raise=3pt,amplitude=4pt}] (1)--(6) node[midway,xshift=-0.6cm]{\Large $\frac{1}{5}$} ;
\draw[decorate,decoration={brace,raise=3pt,amplitude=4pt}] (6) -- (11) node[midway,xshift=-0.6cm]{\Large $\frac{1}{5}$} ;
\draw[decorate,decoration={brace,raise=3pt,amplitude=4pt}] (11) -- (16) node[midway,xshift=-0.6cm]{\Large $\frac{1}{5}$} ;
\draw[decorate,decoration={brace,raise=3pt,amplitude=4pt}] (16) -- (21) node[midway,xshift=-0.6cm]{\Large $\frac{1}{5}$} ;
\draw[decorate,decoration={brace,raise=3pt,amplitude=4pt}] (21) -- (26) node[midway,xshift=-0.6cm]{\Large $\frac{1}{5}$} ;
\draw[decorate,decoration={brace,raise=3pt,amplitude=4pt,mirror}] (1) -- (2) node[midway,yshift=-0.6cm]{\Large $\frac{21}{100}$} ;
\draw[decorate,decoration={brace,raise=3pt,amplitude=4pt,mirror}] (2) -- (3) node[midway,yshift=-0.6cm]{\Large $\frac{21}{100}$} ;
\draw[decorate,decoration={brace,raise=3pt,amplitude=4pt,mirror}] (3) -- (4) node[midway,yshift=-0.6cm]{\Large $\frac{21}{100}$} ;
\draw[decorate,decoration={brace,raise=3pt,amplitude=4pt,mirror}] (4) -- (5) node[midway,yshift=-0.6cm]{\Large $\frac{21}{100}$} ;
\draw[decorate,decoration={brace,raise=3pt,amplitude=4pt,mirror}] (5) -- (34) node[midway,yshift=-0.6cm]{\Large $\frac{4}{25}$} ;
\draw[decorate,decoration={brace,raise=3pt,amplitude=4pt,mirror}] (34) -- (35) node[midway,xshift=0.6cm]{\Large $\frac{1}{4}$} ;
\draw[decorate,decoration={brace,raise=3pt,amplitude=4pt,mirror}] (35) -- (36) node[midway,xshift=0.6cm]{\Large $\frac{1}{4}$} ;
\draw[decorate,decoration={brace,raise=3pt,amplitude=4pt,mirror}] (36) -- (37) node[midway,xshift=0.6cm]{\Large $\frac{1}{4}$} ;
\draw[decorate,decoration={brace,raise=3pt,amplitude=4pt,mirror}] (37) -- (38) node[midway,xshift=0.6cm]{\Large $\frac{1}{4}$} ;

\end{tikzpicture}
\end{center}

Below is a picture for the $t = 5$ case with the labels $\frac{1}{t-1},\frac{1}{t},\frac{1}{t}-\frac{1}{t^2}$, and $\frac{1}{t-1}-\frac{1}{t^2}$. \\

\begin{center}
\begin{tikzpicture}[declare function={a=5;}]

\draw (0,0) coordinate (1) ;
\draw (21*a/100,0) coordinate (2) ;
\draw (21*a/50,0) coordinate (3) ;
\draw (63*a/100,0) coordinate (4) ;
\draw (21*a/25,0) coordinate (5) ;
\draw (0,a/5) coordinate (6) ;
\draw (21*a/100,a/5) coordinate (7) ;
\draw (21*a/50,a/5) coordinate (8) ;
\draw (63*a/100,a/5) coordinate (9) ;
\draw (21*a/25,a/5) coordinate (10) ;
\draw (0,2*a/5) coordinate (11) ;
\draw (21*a/100,2*a/5) coordinate (12) ;
\draw (21*a/50,2*a/5) coordinate (13) ;
\draw (63*a/100,2*a/5) coordinate (14) ;
\draw (21*a/25,2*a/5) coordinate (15) ;
\draw (0,3*a/5) coordinate (16) ;
\draw (21*a/100,3*a/5) coordinate (17) ;
\draw (21*a/50,3*a/5) coordinate (18) ;
\draw (63*a/100,3*a/5) coordinate (19) ;
\draw (21*a/25,3*a/5) coordinate (20) ;
\draw (0,4*a/5) coordinate (21) ;
\draw (21*a/100,4*a/5) coordinate (22) ;
\draw (21*a/50,4*a/5) coordinate (23) ;
\draw (63*a/100,4*a/5) coordinate (24) ;
\draw (21*a/25,4*a/5) coordinate (25) ;
\draw (0,a) coordinate (26) ;
\draw (21*a/100,a) coordinate (27) ;
\draw (21*a/50,a) coordinate (28) ;
\draw (63*a/100,a) coordinate (29) ;
\draw (21*a/25,a) coordinate (30) ;
\draw (21*a/25,a/4) coordinate (31) ;
\draw (21*a/25,a/2) coordinate (32) ;
\draw (21*a/25,3*a/4) coordinate (33) ;
\draw (a,0) coordinate (34) ;
\draw (a,a/4) coordinate (35) ;
\draw (a,a/2) coordinate (36) ;
\draw (a,3*a/4) coordinate (37) ;
\draw (a,a) coordinate (38) ;

\draw (1) -- (2) ;
\draw (2) -- (3) ;
\draw (3) -- (4) ;
\draw (4) -- (5) ;
\draw (6) -- (7) ;
\draw (7) -- (8) ;
\draw (8) -- (9) ;
\draw (9) -- (10) ;
\draw (11) -- (12) ;
\draw (12) -- (13) ;
\draw (13) -- (14) ;
\draw (14) -- (15) ;
\draw (16) -- (17) ;
\draw (17) -- (18) ;
\draw (18) -- (19) ;
\draw (19) -- (20) ;
\draw (21) -- (22) ;
\draw (22) -- (23) ;
\draw (23) -- (24) ;
\draw (24) -- (25) ;
\draw (26) -- (27) ;
\draw (27) -- (28) ;
\draw (28) -- (29) ;
\draw (29) -- (30) ;
\draw (1) -- (6) ;
\draw (6) -- (11) ;
\draw (11) -- (16) ;
\draw (16) -- (21) ;
\draw (21) -- (26) ;
\draw (2) -- (7) ;
\draw (7) -- (12) ;
\draw (12) -- (17) ;
\draw (17) -- (22) ;
\draw (22) -- (27) ;
\draw (3) -- (8) ;
\draw (8) -- (13) ;
\draw (13) -- (18) ;
\draw (18) -- (23) ;
\draw (23) -- (28) ;
\draw (4) -- (9) ;
\draw (9) -- (14) ;
\draw (14) -- (19) ;
\draw (19) -- (24) ;
\draw (24) -- (29) ;
\draw (5) -- (10) ;
\draw (10) -- (15) ;
\draw (15) -- (20) ;
\draw (20) -- (25) ;
\draw (25) -- (30) ;
\draw (5) -- (34) ;
\draw (31) -- (35) ;
\draw (32) -- (36) ;
\draw (33) -- (37) ;
\draw (30) -- (38) ;
\draw (34) -- (35) ;
\draw (35) -- (36) ;
\draw (36) -- (37) ;
\draw (37) -- (38) ;

\coordinate (39) at ($(21)!0.5!(27)$) ;
\draw (39) node{\large 1} ;
\coordinate (40) at ($(22)!0.5!(28)$) ;
\draw (40) node{\large 2} ;
\coordinate (41) at ($(23)!0.5!(29)$) ;
\draw (41) node{\large 3} ;
\coordinate (42) at ($(24)!0.5!(30)$) ;
\draw (42) node{\large 4} ;
\coordinate (43) at ($(16)!0.5!(22)$) ;
\draw (43) node{\large 5} ; 
\coordinate (44) at ($(17)!0.5!(23)$) ;
\draw (44) node{\large 6} ;
\coordinate (45) at ($(18)!0.5!(24)$) ;
\draw (45) node{\large 7} ;
\coordinate (46) at ($(19)!0.5!(25)$) ;
\draw (46) node{\large 8} ;
\coordinate (47) at ($(11)!0.5!(17)$) ;
\draw (47) node{\large 9} ;
\coordinate (48) at ($(12)!0.5!(18)$) ;
\draw (48) node{\large 10} ;
\coordinate (49) at ($(13)!0.5!(19)$) ;
\draw (49) node{\large 11} ;
\coordinate (50) at ($(14)!0.5!(20)$) ;
\draw (50) node{\large 12} ;
\coordinate (51) at ($(6)!0.5!(12)$) ;
\draw (51) node{\large 13} ;
\coordinate (52) at ($(7)!0.5!(13)$) ;
\draw (52) node{\large 14} ;
\coordinate (53) at ($(8)!0.5!(14)$) ;
\draw (53) node{\large 15} ;
\coordinate (54) at ($(9)!0.5!(15)$) ;
\draw (54) node{\large 16} ;
\coordinate (55) at ($(1)!0.5!(7)$) ;
\draw (55) node{\large 17} ;
\coordinate (56) at ($(2)!0.5!(8)$) ;
\draw (56) node{\large 18} ;
\coordinate (57) at ($(3)!0.5!(9)$) ;
\draw (57) node{\large 19} ;
\coordinate (58) at ($(4)!0.5!(10)$) ;
\draw (58) node{\large 20} ;
\coordinate (59) at ($(33)!0.5!(38)$) ;
\draw (59) node{\large 21} ;
\coordinate (60) at ($(32)!0.5!(37)$) ;
\draw (60) node{\large 22} ;
\coordinate (61) at ($(31)!0.5!(36)$) ;
\draw (61) node{\large 23} ;
\coordinate (62) at ($(5)!0.5!(35)$) ;
\draw (62) node{\large 24} ;

\draw[decorate,decoration={brace,raise=3pt,amplitude=4pt}] (1)--(6) node[midway,xshift=-0.6cm]{\Large $\frac{1}{t}$} ;

\draw[decorate,decoration={brace,raise=3pt,amplitude=4pt,mirror}] (1) -- (2) node[midway,yshift=-0.6cm]{\Large $\frac{1}{t-1}-\frac{1}{t^2}$} ;
\draw[decorate,decoration={brace,raise=3pt,amplitude=4pt,mirror}] (5) -- (34) node[midway,yshift=-0.6cm]{\Large $\frac{1}{t}- \frac{1}{t^2}$} ;
\draw[decorate,decoration={brace,raise=3pt,amplitude=4pt,mirror}] (34) -- (35) node[midway,xshift=0.6cm]{\Large $\frac{1}{t-1}$} ;
\end{tikzpicture}
\end{center}

\minus*

\begin{proof}
To see that $g^*(t^2-1) \leq \frac{1}{t} - \frac{1}{t^2} + \frac{1}{t-1}$, see the construction described above. More formally, define a coloring of the unit square $\chi : [0,1]^2 \rightarrow [t^2-1]$ where for $i \in \{0\} \cup [t-2]$ and $j \in \{0\} \cup [t-1]$, for all 

$$(x,y) \in \left( i \cdot (\frac{1}{t-1} - \frac{1}{t^2}), \ (i+1) \cdot(\frac{1}{t-1} - \frac{1}{t^2}) \right) \times \left( j \cdot \frac{1}{t}, \ (j+1) \cdot \frac{1}{t} \right) $$

\noindent
set $\chi(x,y) = (t-j-1) \cdot (t-1) + i + 1$. For $j \in \{0\} \cup [t-2] $, for all 

$$(x,y) \in \left( 1- (\frac{1}{t}-\frac{1}{t^2}), \ 1 \right) \times \left( j \cdot \frac{1}{t}, \ (j+1) \cdot \frac{1}{t-1} \right) $$

\noindent 
set $\chi(x,y) = t^2 - j - 1$. For all the other points we haven't colored yet, we can choose any color we like. It doesn't matter because their projections onto the $x$ and $y$ axes have Lebesgue measure $0$.  \\

Now we show $g^*(t^2-1) \geq \frac{1}{t} - \frac{1}{t^2} + \frac{1}{t-1}$. We may assume $a_i+b_i \leq \frac{1}{t}-\frac{1}{t^2} + \frac{1}{t-1}$ for all $i \in [t^2-1]$. Let $k = \frac{1}{t} - \frac{1}{t^2} + \frac{1}{t-1}$. \\

\textbf{Claim 1}: if $R \subseteq [t^2-1]$ and $|R| \leq t-2$, then $a(R) = b(R) = 0$. Assume for the sake of contradiction otherwise. Note we can't have that $a(\{\}) > 0$, because by constraint (5) this would imply that $b(R) = 0$ for all $R \subseteq [t^2-1]$, contradicting constraint (2). Likewise, we can't have $b(\{\}) > 0$. So we may assume $|R| \geq 1$. We have $a(R) > 0$ implies that $\sum_{i \in R} b_i \geq 1$. We apply Lemma 3 with $s = |R|$, which we can do because 

$$k = \frac{1}{t} - \frac{1}{t^2} + \frac{1}{t-1} \leq \frac{2}{t-2} < \frac{2}{|R|}.$$

\noindent
The inequality $\frac{1}{t} - \frac{1}{t^2} + \frac{1}{t-1} \leq \frac{2}{t-2}$ holds when $t \geq 3$, which can be checked by algebraic manipulations or by mathematical software (we used Mathematica). \\

Thus Lemma 3 says $\sum_{i \in R} a_ib_i \leq k - \frac{1}{|R|}$, and so we must have 

$$1-k+\frac{1}{|R|} \leq \sum_{i \in [t^2-1] \setminus R} a_ib_i \leq (t^2-1-|R|)\frac{k^2}{4}, $$

\noindent
where the last inequality comes from AM-GM. The inequality $1-k+\frac{1}{|R|} \leq (t^2-1-|R|)\frac{k^2}{4}$ is a contradiction when $t \geq 3$ and $1 \leq |R| \leq t-2$ which can be checked by mathematical software. Thus Claim 1 is proven. \\

\textbf{Claim 2}: There do not exist $R_1,R_2 \subseteq [t^2-1]$ such that $|R_1| = |R_2| = t-1$ and $a(R_1) > 0, b(R_2) > 0$. Assume otherwise. By constraint (5), we have $R_1 \cap R_2 \neq \{\}$. First assume $R_1 \setminus R_2 \neq \{\}$. From $a(R_1) > 0$, we have $\sum_{i \in R_1} b_i \geq 1$ and from $b(R_2) > 0$, we have $\sum_{i \in R_2} a_i \geq 1$. We wish to apply Lemma 6 with $I = R_1 \cap R_2$, $O_1 = R_1 \setminus R_2$, and $O_2 = R_2 \setminus R_1$. The inequality $k < \frac{2}{t-1} = \frac{2}{|I|+|O_1|}$ holds for $t \geq 3$ which can be checked by mathematical software. Thus by Lemma 6 for $e = \frac{2-|I|k}{2|O_1|}$ we get

$$\sum_{i \in R_1 \cup R_2} a_ib_i \leq |I|\frac{k^2}{4} + 2|O_1| \cdot e(k-e)$$

\noindent 
so that

\begin{align*}
1-(|I|\frac{k^2}{4} + 2|O_1| \cdot e(k-e)) &\leq \sum_{i \in [t^2-1] \setminus (R_1 \cup R_2)} a_ib_i \\
&\leq (t^2 - 1 - |I| -|O_1|-|O_2|)\frac{k^2}{4}.
\end{align*}

The inequality 

$$1-(|I|\frac{k^2}{4} + 2|O_1| \cdot e(k-e)) \leq (t^2 - 1 - |I| -2|O_1|)\frac{k^2}{4}$$

\noindent
reduces to 

$$1-(|I|\frac{k^2}{4} + 2(t-1-|I|) \cdot e(k-e)) \leq (t^2 - 1 - |I| -2(t-1-|I|))\frac{k^2}{4}$$

\noindent
where $e = \frac{2-|I|k}{2(t-1-|I|)}$ because $|I| + |O_1| = t-1$. This last inequality is a contradiction for $t \geq 3$ and $1 \leq |I| \leq t-2$ which can be checked by mathematical software.  \\

Now we consider the case when $O_1 = O_2 = \{\}$, and thus $R_1 = R_2$. From $a(R_1) > 0$ we get $\sum_{i \in R_1} b_i \geq 1$ and from $b(R_1) > 0$ we get $\sum_{i \in R_1} a_i \geq 1$. We sum these two equations to get $\sum_{i \in R_1} a_i+b_i \geq 2$, so that for some $i \in R_1$ we have $a_i+b_i \geq \frac{2}{|R_1|} = \frac{2}{t-1} > k$, a contradiction. Thus Claim 2 is proven. \\

\textbf{Claim 3}: There do not exist $R_1,R_2 \subseteq [t^2-1]$ such that $|R_1| = |R_2| = t-1$, $R_1 \neq R_2$, and $a(R_1), a(R_2) > 0$ or $b(R_1), b(R_2) > 0$. Assume otherwise, we show it for $a(R_1), a(R_2) > 0$. From $a(R_1) > 0$, we have $\sum_{i \in R_1} b_i \geq 1$ and from $a(R_2) > 0$ we have $\sum_{i \in R_2} b_i \geq 1$. We wish to apply Lemma 7 with $I = R_1 \cap R_2$, $O_1 = R_1 \setminus R_2$, and $O_2 = R_2 \setminus R_1$. We can do this because $k < \frac{2}{|I|+|O_1|} = \frac{2}{t-1}$. Thus Lemma 7 gives us for $e_1 = \frac{2+|I|k}{2|I|+|O_1|} - \frac{k}{2}$ and $e_2 = \frac{1+|I|\frac{k}{2}}{2|I| + |O_1|}$ that

$$\sum_{i \in R_1 \cup R_2} a_ib_i \leq |I| \cdot e_1(k-e_1) + 2|O_1| \cdot e_2(k - e_2)$$

and thus

\begin{align*}
1 - (|I| \cdot e_1(k-e_1) + 2|O_1| \cdot e_2(k - e_2)) &\leq \sum_{i \in [t^2-1] \setminus R_1 \cup R_2} a_ib_i \\
&\leq (t^2 - 1 - |I| -2|O_1|)\frac{k^2}{4}.
\end{align*}

The inequality

$$1 - (|I| \cdot e_1(k-e_1) + 2|O_1| \cdot e_2(k - e_2)) \leq (t^2 - 1 - |I| -2|O_1|)\frac{k^2}{4}$$

\noindent
reduces to

$$1 - (|I| \cdot e_1(k-e_1) + 2(t-1-|I|) \cdot e_2(k - e_2)) \leq (t^2 - 1 - |I| -2(t-1-|I|))\frac{k^2}{4}$$

\noindent
where $e_1 = \frac{2+|I|k}{t-1+|I|} - \frac{k}{2}$ and $e_2 = \frac{1+|I|\frac{k}{2}}{t-1+|I|}$ because $|I| + |O_1| = t-1$. This last inequality is a contradiction for $t \geq 3$ and $0 \leq |I| \leq t-2$ which can be checked by mathematical software. Thus Claim 3 is proven. \\

By Claims 1 and 2, we must have either $a(R) > 0$ implies $|R|  \geq t$ for all $R \subseteq [t^2-1]$, or $b(R) > 0$ implies $|R| \geq t$ for all $R \subseteq [t^2-1]$, since we know there can't exist $R_1,R_2 \subseteq [t^2-1]$ such that $|R_1|,|R_2| \leq t-1$ and $a(R_1) > 0$ and $b(R_2) > 0$. Without loss of generality assume that if $b(R) > 0$, then $|R| \geq t$ for all $R \subseteq [t^2-1]$. Since $a_i + b_i \leq k$ for all $i \in [t^2-1]$, we have that 

$$\sum_{R \subseteq [t^2-1]} |R|(a(R) + b(R)) = \sum_{i \in [t^2-1]} a_i + b_i \leq (t^2-1)k$$

\noindent
so that

\begin{align*}
(t^2-1)k &\geq \sum_{R \subseteq [t^2-1]} (a(R) + b(R))|R| = \sum_{R \subseteq [t^2-1]} |R| \cdot a(R) \ + \sum_{R \subseteq [t^2-1], |R| \geq t} |R| \cdot b(R) \\
&\geq \sum_{R \subseteq [t^2-1]} |R| \cdot a(R) + \sum_{R \subseteq [t^2-1], |R| \geq t} |R| \cdot b(R)  = \sum_{R \subseteq [t^2-1]} |R| \cdot a(R) + t
\end{align*}

\noindent
and thus $\sum_{R \subseteq [t^2-1]} |R| \cdot a(R) \leq (t^2-1)k - t$. In particular, this implies that 

\begin{align*}
(t^2-1)k - t &\geq \sum_{R \subseteq [t^2-1]} |R| \cdot a(R) \\
&= \left( \sum_{R \subseteq [t^2-1], |R| = t-1} |R| \cdot a(R) \right) + \left( \sum_{R \subseteq [t^2-1], |R| \geq t-1} |R| \cdot a(R) \right)  \\
&\geq (t-1)\left( \sum_{R \subseteq [t^2-1], |R| = t-1} a(R) \right) + t\left( \sum_{R \subseteq [t^2-1], |R| \geq t} a(R) \right) \\
&= t-1+ \left( \sum_{R \subseteq [t^2-1], |R| \geq t} a(R) \right)
\end{align*}

\noindent
so that $\sum_{R \subseteq [t^2-1], |R| \geq t} a(R) \leq (t^2-1)k-2t+1$ and thus 

\begin{align*}
\sum_{R \subseteq [t^2-1], |R| = t-1} a(R) \cdot |R| &\geq 1-((t^2-1)k-2t+1) \\
&= 1-((t^2-1)(\frac{1}{t} - \frac{1}{t^2} + \frac{1}{t-1})-2t+1) \\
&= 1-(\frac{t^2-1}{t} - \frac{t^2-1}{t^2} + t+1-2t+1) \\
&= 1 - (\frac{t^3-t-t^2+1 - t^3 + 2t^2}{t^2}) \\
&=  1 - (\frac{-t+1 + t^2}{t^2}) = \frac{1}{t} - \frac{1}{t^2} > 0.
\end{align*}

Thus there exists $R \subseteq [t^2-1]$ where $|R| = t-1$ and $a(R) > 0$. Without loss of generality, let $R = [t-1]$. By Claim 3 $R$ is a unique such subset. Thus

$$\sum_{R \subseteq [t^2-1], |R| = t-1} a(R) =  a([t-1]) \geq \frac{1}{t} - \frac{1}{t^2}.$$

\noindent
From $a([t-1]) \geq \frac{1}{t} - \frac{1}{t^2}$ we get $a_i \geq \frac{1}{t} - \frac{1}{t^2}$ for all $i \in [t-1]$, and thus $\sum_{i \in [t-1]} a_i \geq (t-1)(\frac{1}{t} - \frac{1}{t^2})$. From $a([t-1]) > 0$ we get $\sum_{i \in [t-1]} b_i \geq 1$. Summing these two inequalities together we get $\sum_{i \in [t-1]} a_i+b_i \geq (t-1)(\frac{1}{t} - \frac{1}{t^2}) + 1$, and thus for some $j \in [t-1]$ we have

$$a_j+b_j \geq \frac{1}{t} - \frac{1}{t^2} + \frac{1}{t-1}$$

\noindent
as required. 
\end{proof}

\begin{center}
\section{Universal Upper Bound on $g^*(r)$}
\end{center}

We followed a specific procedure in constructing the coloring squares in the propositions and theorems above. We conjecture that this procedure produces an optimal solution for all $r \in \mathbb{Z}^{+}$. We describe the procedure for constructing coloring squares below. \\

We show the procedure for constructing coloring squares on the example $r = 28$. We first note that $5^2 \leq 28 \leq (5+1)^2$ and that $5^2 + 3 = 28 \leq 5 \cdot (5+1)$. We will take the $5$-tuple $(5,5,6,6,6) \in \{5,6\}^{5}$, and note that $5 + 5 + 6 + 6 + 6 = 28$. We will partition the square into five ``columns" such that the first two columns have the same horizontal side length and the last three columns have the same horizontal side length. Then we will further partition the each of the first two columns into 5 equally sized rectangles stacked on top of each other and the last three columns into 6 equally sized rectangles stacked on top of each other as shown below: \\

\begin{center}
\begin{tikzpicture}[declare function={a=5;}]

\draw (0,0) coordinate (1) ;
\draw (9*a/50,0) coordinate (2) ;
\draw (9*a/25,0) coordinate (3) ;
\draw (0,a/5) coordinate (4) ;
\draw (9*a/50,a/5) coordinate (5) ;
\draw (9*a/25,a/5) coordinate (6) ;
\draw (0,2*a/5) coordinate (7) ;
\draw (9*a/50,2*a/5) coordinate (8) ;
\draw (9*a/25,2*a/5) coordinate (9) ;
\draw (0,3*a/5) coordinate (10) ;
\draw (9*a/50,3*a/5) coordinate (11) ;
\draw (9*a/25,3*a/5) coordinate (12) ;
\draw (0,4*a/5) coordinate (13) ;
\draw (9*a/50,4*a/5) coordinate (14) ;
\draw (9*a/25,4*a/5) coordinate (15) ;
\draw (0,a) coordinate (16) ;
\draw (9*a/50,a) coordinate (17) ;
\draw (9*a/25,a) coordinate (18) ;
\draw (43*a/75,0) coordinate (19) ;
\draw (59*a/75,0) coordinate (20) ;
\draw (a,0) coordinate (21) ;
\draw (9*a/25,a/6) coordinate (22) ;
\draw (43*a/75,a/6) coordinate (23) ;
\draw (59*a/75,a/6) coordinate (24) ;
\draw (a,a/6) coordinate (25) ;
\draw (9*a/25,a/3) coordinate (26) ;
\draw (43*a/75,a/3) coordinate (27) ;
\draw (59*a/75,a/3) coordinate (28) ;
\draw (a,a/3) coordinate (29) ;
\draw (9*a/25,a/2) coordinate (30) ;
\draw (43*a/75,a/2) coordinate (31) ;
\draw (59*a/75,a/2) coordinate (32) ;
\draw (a,a/2) coordinate (33) ;
\draw (9*a/25,2*a/3) coordinate (34) ;
\draw (43*a/75,2*a/3) coordinate (35) ;
\draw (59*a/75,2*a/3) coordinate (36) ;
\draw (a,2*a/3) coordinate (37) ;
\draw (9*a/25,5*a/6) coordinate (38) ;
\draw (43*a/75,5*a/6) coordinate (39) ;
\draw (59*a/75,5*a/6) coordinate (40) ;
\draw (a,5*a/6) coordinate (41) ;
\draw (43*a/75,a) coordinate (42) ;
\draw (59*a/75,a) coordinate (43) ;
\draw (a,a) coordinate (44) ;

\draw (1) -- (2) ;
\draw (2) -- (3) ;
\draw (3) -- (19) ;
\draw (19) -- (20) ;
\draw (20) -- (21) ;
\draw (4) -- (5) ;
\draw (5) -- (6) ;
\draw (22) -- (23) -- (24) -- (25) ;
\draw (7) -- (8) -- (9) ;
\draw (26) -- (27) -- (28) -- (29) ;
\draw (30) -- (31) -- (32) -- (33) ;
\draw (34) -- (37) ;
\draw (38) -- (41) ;
\draw (18) -- (44) ;
\draw (10) -- (12) ;
\draw (13) -- (15) ;
\draw (16) -- (18) ;
\draw (1) -- (16) ;
\draw (2) -- (17) ;
\draw (3) -- (18) ;
\draw (19) -- (42) ;
\draw (20) -- (43) ;
\draw (21) -- (44) ;

\draw[decorate,decoration={brace,raise=3pt,amplitude=4pt}] (1) -- (4) node[midway,xshift=-0.6cm]{\Large $\frac{1}{5}$} ;
\draw[decorate,decoration={brace,raise=3pt,amplitude=4pt}] (4) -- (7) node[midway,xshift=-0.6cm]{\Large $\frac{1}{5}$} ;
\draw[decorate,decoration={brace,raise=3pt,amplitude=4pt}] (7) -- (10) node[midway,xshift=-0.6cm]{\Large $\frac{1}{5}$} ;
\draw[decorate,decoration={brace,raise=3pt,amplitude=4pt}] (10) -- (13) node[midway,xshift=-0.6cm]{\Large $\frac{1}{5}$} ;
\draw[decorate,decoration={brace,raise=3pt,amplitude=4pt}] (13) -- (16) node[midway,xshift=-0.6cm]{\Large $\frac{1}{5}$} ;
\draw[decorate,decoration={brace,raise=3pt,amplitude=4pt,mirror}] (1) -- (2) node[midway,yshift=-0.6cm]{\Large $x$} ;
\draw[decorate,decoration={brace,raise=3pt,amplitude=4pt,mirror}] (2) -- (3) node[midway,yshift=-0.6cm]{\Large $x$} ;
\draw[decorate,decoration={brace,raise=3pt,amplitude=4pt,mirror}] (3) -- (21) node[midway,yshift=-0.6cm]{\large $1-2x$} ;
\draw[decorate,decoration={brace,raise=3pt,amplitude=4pt,mirror}] (21) -- (25) node[midway,xshift=0.6cm]{\Large $\frac{1}{6}$} ;
\draw[decorate,decoration={brace,raise=3pt,amplitude=4pt,mirror}] (25) -- (29) node[midway,xshift=0.6cm]{\Large $\frac{1}{6}$} ;
\draw[decorate,decoration={brace,raise=3pt,amplitude=4pt,mirror}] (29) -- (33) node[midway,xshift=0.6cm]{\Large $\frac{1}{6}$} ;
\draw[decorate,decoration={brace,raise=3pt,amplitude=4pt,mirror}] (33) -- (37) node[midway,xshift=0.6cm]{\Large $\frac{1}{6}$} ;
\draw[decorate,decoration={brace,raise=3pt,amplitude=4pt,mirror}] (37) -- (41) node[midway,xshift=0.6cm]{\Large $\frac{1}{6}$} ;
\draw[decorate,decoration={brace,raise=3pt,amplitude=4pt,mirror}] (41) -- (44) node[midway,xshift=0.6cm]{\Large $\frac{1}{6}$} ;
\draw[decorate,decoration={brace,raise=3pt,amplitude=4pt}] (18) -- (42) node[midway,yshift=0.6cm]{\Large $\frac{1-2x}{3}$} ;
\draw[decorate,decoration={brace,raise=3pt,amplitude=4pt}] (42) -- (43) node[midway,yshift=0.6cm]{\Large $\frac{1-2x}{3}$} ;
\draw[decorate,decoration={brace,raise=3pt,amplitude=4pt}] (43) -- (44) node[midway,yshift=0.6cm]{\Large $\frac{1-2x}{3}$} ;

\end{tikzpicture}
\end{center}

\noindent
$x$ is the rational number satisfying $\frac{1}{6} + x = \frac{1}{5} + \frac{1-3x}{2}$ so that all of the rectangles in each column have the same sum of side lengths. \\

In general, given $t^2 \leq r \leq t(t+1)$ and $r = t^2 + q$, we will partition the square into $t$ columns where the first $p = t-q$ columns have the same horizontal side length and the last $q$ columns have the same horizontal side length. We will further partition each of the first $p$ columns into $t$ equally sized rectangles stacked on top of each other, and we will further partition each of the last $q$ columns into $t+1$ equally sized rectangles stacked on top of each other. A rectangle inside one of the first $p$ columns will have horizontal side length $x$ and vertical side length $\frac{1}{t}$, and a rectangle inside one of the last $q$ columns will have horizontal side length $\frac{1-px}{q}$ and vertical side length $\frac{1}{t+1}$, where $x$ satisfies $\frac{1}{t} + x = \frac{1}{t+1} + \frac{1-px}{q}$, and so this way rectangles in any column have the same side length. \\

In the case that $t(t+1) \leq r \leq (t+1)^2$, and $r = (t+1)^2-q$, we will partition the square into $t+1$ columns where the first $p = t+1-q$ columns have the same horizontal side length and the last $q$ columns have the same horizontal side length. We will further partition each of the first $p$ columns into $t+1$ equally sized rectangles stacked on top of each other, and we will further partition each of the last $q$ columns into $t$ equally sized rectangles stacked on top of each other. A rectangle inside one of the first $p$ columns will have horizontal side length $x$ and vertical side length $\frac{1}{t+1}$, and a rectangle inside one of the last $q$ columns will have horizontal side length $\frac{1-px}{q}$ and vertical side length $\frac{1}{t}$, where $x$ satisfies $\frac{1}{t+1} + x = \frac{1}{t} + \frac{1-px}{q}$, and so this way rectangles in any column have the same side length. \\

The theorems below are a result of following the procedure described above and finding $x$ that optimizes the side lengths of the smaller rectangles. \\

\univone*

\begin{proof}
We define a coloring $\chi : [0,1]^2 \rightarrow [r]$ which satisfies 

$$a_i + b_i = \frac{2}{t} + \frac{1}{t+1} + \frac{r}{t(t+1)} - \frac{r}{t^2}$$

\noindent 
for all $i \in [r]$; it will be the construction described above.  Let $p = t(t+1)-r$ and $p+q = t$. Set $x = \frac{p}{t^2}+\frac{q}{t(t+1)}$. Define $[0] = \{\}$. For $i \in \{0\} \cup [p-1]$ and $j \in \{0\} \cup [t-1]$, for all

$$(x,y) \in \biggl( i \cdot x, \ (i+1) \cdot x \biggl) \times \left( j \cdot \frac{1}{t}, \ (j+1) \cdot \frac{1}{t} \right)$$

\noindent
set $\chi(x,y) = (t-j-1) \cdot p + i + 1$.  If $p = 0$, then we skip this step. For $i \in \{0\} \cup [q-1]$ and $j \in \{0\} \cup [t]$, for all 

$$(x,y) \in \biggl(i \cdot \frac{1-px}{q} + px, \ (i+1) \cdot \frac{1-px}{q} + px \biggl) \times \biggl(j \cdot \frac{1}{t+1}, \ (j+1) \cdot \frac{1}{t+1} \biggl)$$

\noindent
set $\chi(x,y) = (t-j) \cdot q + i + 1 + pt$. If $q = 0$ we skip this step. For all the points we haven't colored yet, we can choose any color we want. Their projections on the $x$ and $y$ axes have Lebesgue measure 0.  One can check $a_i + b_i = \frac{2}{t} + \frac{1}{t+1} + \frac{r}{t(t+1)} - \frac{r}{t^2}$ for all $i \in [r]$. 

\end{proof}

\univtwo*

\begin{proof}
We define a coloring $\chi : [0,1]^2 \rightarrow [r]$ which satisfies 

$$a_i + b_i = \frac{2}{t+1}+\frac{1}{t}+\frac{r}{(t+1)^2}-\frac{r}{t(t+1)}$$

\noindent 
for all $i \in [r]$; it will be the construction described above.  Let $p = r-t(t+1)$ and $p+q = t+1$. Set $x = \frac{p}{(t+1)^2}+\frac{q}{t(t+1)}$. Define $[0] = \{\}$. For $i \in \{0\} \cup [p-1]$ and $j \in \{0\} \cup [t]$, for all

$$(x,y) \in \biggl( i \cdot x, \ (i+1) \cdot x \biggl) \times \left( j \cdot \frac{1}{t+1}, \ (j+1) \cdot \frac{1}{t+1} \right)$$

\noindent
set $\chi(x,y) = (t-j) \cdot p + i + 1$.  If $p = 0$, then we skip this step. For $i \in \{0\} \cup [q-1]$ and $j \in \{0\} \cup [t-1]$, for all 

$$(x,y) \in \biggl(i \cdot \frac{1-px}{q} + px, \ (i+1) \cdot \frac{1-px}{q} + px \biggl) \times \biggl(j \cdot \frac{1}{t}, \ (j+1) \cdot \frac{1}{t} \biggl)$$

\noindent
set $\chi(x,y) = (t-j-1) \cdot q + i + 1 + p(t+1)$. If $q = 0$ we skip this step. For all the points we haven't colored yet, we can choose any color we want. Their projections on the $x$ and $y$ axes have Lebesgue measure 0.  One can check $a_i + b_i = \frac{2}{t+1}+\frac{1}{t}+\frac{r}{(t+1)^2}-\frac{r}{t(t+1)}$ for all $i \in [r]$. 
\end{proof}

\begin{center}
\section{Conjectures}
\end{center}
\begin{conjecture}
For all $r \in \mathbb{Z}^{+}$, in any optimal solution $a,b : \mathcal{P}([r]) \rightarrow [0,1]$ to the coloring problem, there do not exist $R_1,R_2 \subseteq [r]$ such that $|R_1 \cap R_2| \geq 2$ and $a(R_1) > 0, b(R_2) > 0$. 
\end{conjecture}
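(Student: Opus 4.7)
The plan is to argue by contradiction. Suppose there is an optimal solution $a, b$ to the coloring problem on $r$ colors together with subsets $R_1, R_2 \subseteq [r]$ satisfying $a(R_1) > 0$, $b(R_2) > 0$, and $|R_1 \cap R_2| \geq 2$. The first step is to strengthen Lemma 1 to extract quantitative slack: repeating its proof gives the identity
$$\sum_{i \in [r]} a_i b_i = \sum_{R, R' \subseteq [r]} |R \cap R'| \cdot a(R) \, b(R'),$$
and combined with constraints (5) and (2) this yields $\sum_{i \in [r]} a_i b_i \geq 1 + (|R_1 \cap R_2| - 1) \cdot a(R_1) b(R_2) > 1$. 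Visually, this strict surplus corresponds to the fact that the intersection cell of column type $R_1$ with row type $R_2$ admits more than one legal color, so some coloring area is genuinely free and should in principle be convertible into a reduction of $\max_i (a_i + b_i)$.

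Next I would attempt the minimal perturbation: choose $j \in R_1 \cap R_2$ and transfer a small mass $\varepsilon > 0$ from $a(R_1)$ to $a(R_1 \setminus \{j\})$. This leaves $a_k, b_k$ unchanged for $k \neq j$ and decreases $a_j$ by $\varepsilon$, provided constraint (5) is preserved, that is $(R_1 \setminus \{j\}) \cap R \neq \emptyset$ for every $R$ with $b(R) > 0$. When such a $j$ exists and is a maximizer of $a_j + b_j$, the solution is strictly improved, contradicting optimality. If $j$ is not a maximizer, the gain $\varepsilon$ on coordinate $j$ should be traded for a reduction at a maximizing coordinate by a further shift of mass, with bookkeeping analogous to the case analyses carried out in Sections 5 and 6.

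The main obstacle, and what I expect to be the hardest part of the argument, is that constraint (5) can block every single-element deletion: for each $j \in R_1 \cap R_2$ there may exist a row type $R \in \mathrm{supp}(b)$ with $R \cap R_1 = \{j\}$, so removing $j$ from $R_1$ alone creates an empty intersection elsewhere. To handle this I would allow simultaneous perturbations of both $a$ and $b$, coupling the shrinking of $R_1$ with a compensating enlargement or splitting of the obstructing row types, so that all constraint-(5) incidences are maintained. The cleanest organizing framework is to fix the support pair $(P_1, P_2) = (\mathrm{supp}(a), \mathrm{supp}(b))$, which linearizes constraint (5), and then compare the resulting linear program with the one obtained after pruning every intersection of size at least $2$. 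Establishing that this pruning step never increases the LP optimum, via a complementary-slackness argument driven by the surplus identity above, is where essentially all the difficulty lies and is likely the reason the statement has been left as a conjecture.
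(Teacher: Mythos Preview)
The statement you are addressing is Conjecture~1, which the paper explicitly leaves open; the only justification offered there is the informal remark that an intersection $|R_1\cap R_2|\ge 2$ means area is being ``wasted''. So there is no proof in the paper to compare against.

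Your proposal is an honest attempt to make that heuristic precise, and the first step is genuinely useful: the identity
\[
\sum_{i\in[r]} a_i b_i \;=\; \sum_{R,R'\subseteq[r]} |R\cap R'|\,a(R)\,b(R')
\]
does follow from the proof of Lemma~1, and together with constraints~(5) and~(2) it gives the strict surplus $\sum_i a_ib_i \ge 1 + (|R_1\cap R_2|-1)\,a(R_1)b(R_2) > 1$. This is exactly the quantitative form of the wasted-area intuition.

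That said, the proposal is not a proof, for reasons you partly acknowledge. Beyond the constraint-(5) obstruction you flag in your last paragraph, there is already a gap in the ``easy'' case. Shifting $\varepsilon$ of mass from $a(R_1)$ to $a(R_1\setminus\{j\})$ lowers only the single coordinate $a_j+b_j$. By Proposition~7 the maximum of $a_i+b_i$ is never attained at a unique index, so lowering one coordinate does not lower $\max_i(a_i+b_i)$; you would need to decrease all tied maximizers at once, and nothing in the argument arranges that. Your later remark about ``trading'' the gain at $j$ for a reduction at a maximizing coordinate, and the LP-pruning/complementary-slackness outline at the end, are reasonable directions but are not carried out. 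In short, your write-up sharpens the paper's intuition and correctly locates the obstacles, but it leaves the conjecture open---which is consistent with its status in the paper.
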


The reason why we believe this is true is because $|R_1 \cap R_2| \geq 2$ would imply that we are ``wasting" area. The region at the intersection of columns with colors $R_1$ and rows with colors $R_2$ can be colored with two different colors, but we really just need one of the colors to color the region. Note all of our constructions above satisfy the conjecture. \\

The next proposition shows the technique of row or column deletions in use. Note that deleting $0 < \epsilon < 1$ proportion of rows or columns to get a rectangle, and then rescaling the rectangle to become a square again gives us a new solution. If the old solution was optimal, and $a_i + b_i = k$ for some $i \in [r]$, then the new solution must also have some $j \in [r]$ where $a_j+b_j \geq k$ by the optimality of the old solution. We'll be more precise by what we mean by column and row deletions in the proposition below. 

\begin{proposition}
For all $r \in \mathbb{Z}^{+} \setminus \{1\}$, in any optimal solution $a,b : \mathcal{P}([r]) \rightarrow [0,1]$ to the coloring problem, there does not exist a color $i \in [r]$ such that $a_i+b_i > a_j+b_j$ for all $j \in [r] \setminus \{i\}$. 
\end{proposition}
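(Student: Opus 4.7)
The plan is to argue by contradiction: assume $(a, b)$ is an optimal solution to the coloring problem on $r$ colors with $m := g^*(r) = a_{i^*} + b_{i^*}$ strictly greater than $a_j + b_j$ for every $j \in [r] \setminus \{i^*\}$, and produce a valid solution whose maximum is strictly less than $m$. A useful preliminary is that $m < 2$: for any $r \geq 2$, taking an optimal two-color solution and padding it with zeros (setting $a(R) = b(R) = 0$ on every $R$ containing a color from $[r] \setminus [2]$) yields a valid $r$-color solution, so by Proposition 1 we have $g^*(r) \leq g^*(2) = \tfrac{3}{2} < 2$. Since $r \geq 2$ also gives $m > a_j + b_j \geq 0$ for any $j \neq i^*$, we have $0 < m < 2$; in particular $a_{i^*}$ and $b_{i^*}$ are not both equal to $1$.

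The main workhorse is a column-shrinking operation, applicable whenever $0 < a_{i^*} < 1$. Pick $R^* \ni i^*$ with $a(R^*) > 0$, and for $\epsilon \in (0, a(R^*))$ set
$$\tilde{a}(R) = \frac{a(R) - \epsilon \cdot \mathbf{1}[R = R^*]}{1-\epsilon}, \qquad \tilde{b}(R) = b(R),$$
which corresponds to deleting an $\epsilon$-wide strip of type-$R^*$ columns and rescaling horizontally. Constraints (1)--(3) are routine for small $\epsilon$, and constraint (5) is inherited because $\tilde{a}$ has the same support as $a$ and $\tilde{b} = b$. A one-line calculation gives $\tilde{a}_j - a_j = \epsilon(a_j - \mathbf{1}[j \in R^*])/(1-\epsilon)$; in particular $\tilde{a}_{i^*} + \tilde{b}_{i^*} - m = \epsilon(a_{i^*} - 1)/(1-\epsilon) < 0$ since $a_{i^*} < 1$, while for $j \neq i^*$ the change is $O(\epsilon)$. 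The strict slack $a_j + b_j < m$ for $j \neq i^*$ then guarantees $\tilde{a}_j + \tilde{b}_j < m$ for sufficiently small $\epsilon$, contradicting optimality. A symmetric row-shrinking operation handles the case $0 < b_{i^*} < 1$.

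It remains to dispose of the degenerate case in which neither $a_{i^*}$ nor $b_{i^*}$ lies in $(0, 1)$. Since $0 < m < 2$ forces $(a_{i^*}, b_{i^*}) \in \{(0, 1), (1, 0)\}$, assume by symmetry the latter. Then every $R$ with $a(R) > 0$ contains $i^*$ while every $R'$ with $b(R') > 0$ avoids it, and I would remove $i^*$ from the column types by setting
$$a'(R) = \begin{cases} a(R) + a(R \cup \{i^*\}), & i^* \notin R, \\ 0, & i^* \in R, \end{cases}$$
and $b' = b$. Reindexing gives $a'_j = a_j$ for every $j \neq i^*$ and $a'_{i^*} = 0$; constraint (5) survives because any $R_1$ with $a'(R_1) > 0$ satisfies $a(R_1) > 0$ or $a(R_1 \cup \{i^*\}) > 0$, and combining the original constraint (5) with $i^* \notin R_2$ forces $R_1 \cap R_2 \neq \emptyset$ in both cases. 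The new maximum equals $\max_{j \neq i^*}(a_j + b_j) < m$, the final contradiction.

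I expect the main obstacle to be not the one-parameter perturbation itself---which is short once set up---but the boundary case analysis: correctly using $m < 2$ to preclude simultaneous saturation $a_{i^*} = b_{i^*} = 1$, and verifying that the nonlinear constraint (5) is preserved under both the shrinking and the color-removal modifications.
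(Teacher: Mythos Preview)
Your proof is correct and follows the same core idea as the paper: shrink a small proportion of columns (or rows) whose type contains the uniquely maximal color $i^*$ and rescale, producing a feasible solution with strictly smaller maximum. Your formulas $\tilde a_j - a_j = \epsilon(a_j - \mathbf{1}[j\in R^*])/(1-\epsilon)$ make the paper's ``$\alpha_i < a_i$ when $a_i<1$'' observation explicit, and the verification of constraints (1)--(5) is routine as you say.

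Where your write-up differs is in the boundary analysis, and here it is in fact \emph{more} complete than the paper's. After reducing to $a_{i^*}=1$, the paper asserts ``we must have $0<b_{i^*}<1$, otherwise $a_{i^*}+b_{i^*}=2$'', but that clause only excludes $b_{i^*}=1$; it does not rule out $b_{i^*}=0$, and in that situation there is no row type containing $i^*$ to delete, so the paper's row-deletion fallback does not literally apply. Your separate treatment of the degenerate case $(a_{i^*},b_{i^*})\in\{(1,0),(0,1)\}$ via removing $i^*$ from all column types---checking that constraint~(5) survives because every $R_2$ with $b(R_2)>0$ already avoids $i^*$---closes this small gap cleanly. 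So your argument is essentially the paper's, with a tidier handling of the endpoint case.
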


\begin{proof}
 Assume for the sake of contradiction otherwise. Since $a_i+b_i > a_j+b_j \geq 0$ for all $j \in [r] \setminus \{i\}$ where $r \geq 2$, we have that there exists some $R \subseteq [r]$ such that $i \in R$ and $a(R) > 0$ or $b(R) > 0$. Without loss of generality, assume $a(R) > 0$. We redefine the function $a : \mathcal{P}([r]) \rightarrow [0,1]$ to get a better coloring. Let this function be $\alpha : \mathcal{P}([r]) \rightarrow [0,1]$. For some $0 < \epsilon < a(R)$, define $\alpha(R) = \frac{a(R)-\epsilon}{1-\epsilon}$ (column deletion and rescaling) and $\alpha(R') = \frac{a(R')}{1-\epsilon}$ (rescaling) for $R' \in \mathcal{P}([r]) \setminus \{R\}$. We can make $\epsilon$ small enough so that we still maintain $\alpha_i+b_i > \alpha_j + b_j$ for all $j \in [r] \setminus \{i\}$ under the coloring $\alpha, b : \mathcal{P}([r]) \rightarrow [0,1]$. But if $a_i < 1$, note that $\alpha_i+b_i < a_i + b_i$ because $\alpha_i = \frac{a_i-\epsilon}{1-\epsilon} < a_i$ , and thus $\alpha, b$ is a strictly better coloring than $a,b$. If $a_i = 1$, then we may perform row deletions instead, because we must have $0 < b_i < 1$, otherwise $a_i+b_i = 2$ but $g^*(r) < 2$ for $r > 1$. Then the rest of the argument follows by an analogous one given above, except for row deletions. \\
\end{proof}

We believe the following strengthening of Proposition 7 is true, and we suspect that the technique of row and column deletions may be helpful in proving it. \\

\begin{conjecture}
For all $r \in \mathbb{Z}^{+}$, in any optimal solution $a,b : \mathcal{P}([r]) \rightarrow [0,1]$ to the coloring problem, for all $i,j \in [r]$ we have that $a_i + b_i = a_j + b_j$. 
\end{conjecture}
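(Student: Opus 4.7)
The plan is to extend the column- and row-deletion technique from Proposition 7 and proceed by contradiction. Suppose $(a,b)$ is an optimal solution with value $m = g^*(r)$ for which some color $i \in [r]$ satisfies $a_i + b_i < m$, and let $S = \{j \in [r] : a_j + b_j = m\}$. Proposition 7 already forces $|S| \geq 2$, and by assumption $i \notin S$. The goal is to produce a feasible solution whose maximum is strictly less than $m$, contradicting optimality.

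The first step is to revisit column deletion carefully. Shrinking $a(R)$ by $\epsilon$ and rescaling sends $a_j$ to $(a_j - \epsilon)/(1-\epsilon)$ for $j \in R$ and to $a_j/(1-\epsilon)$ for $j \notin R$. For $j \in S \cap R$ with $a_j < 1$ this yields $\alpha_j + b_j < a_j + b_j = m$, exactly as in Proposition 7. For $j \notin S$ we have $\alpha_j + b_j$ close to $a_j + b_j < m$, so small $\epsilon$ keeps it below $m$. The catch is for $j \in S \setminus R$: these grow strictly. So the operation succeeds provided there exists a supported column type $R$ with $R \supseteq S$, or symmetrically a supported row type $R \supseteq S$ with $b(R) > 0$. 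Because only $a$ (resp.\ $b$) is modified and its support does not grow, constraint (5) is preserved automatically. The edge cases $a_j = 1$ or $b_j = 1$ for some $j \in S$ can be handled by switching from column to row deletion exactly as in the proof of Proposition 7.

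The core difficulty is the case in which no single supported column or row covers $S$. Here I would combine operations: let $R_1$ be a supported column with $R_1 \cap S \neq \emptyset$ and pick $j \in R_1 \cap S$, and transfer mass from the column type $R_1$ to the column type $(R_1 \setminus \{j\}) \cup \{i\}$. This decreases $a_j$ by $\epsilon$, increases $a_i$ by $\epsilon$, and fixes every other $a_k$. Iterating over an appropriate collection of $(R_1, j)$ pairs covering $S$, one can reduce $a_j$ simultaneously for every $j \in S$, with $i$ absorbing the total excess; since $\delta_i := m - (a_i + b_i) > 0$, small enough step sizes keep $a_i + b_i$ below $m$ and push each $a_j + b_j$ for $j \in S$ strictly below $m$.

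The hard part will be preserving the non-linear constraint (5) under these swaps. The newly supported column $(R_1 \setminus \{j\}) \cup \{i\}$ must meet every supported row $R_2$; if there is a row $R_2$ with $b(R_2) > 0$ and $R_1 \cap R_2 = \{j\}$, then the swap creates an empty intersection and destroys feasibility. Ruling out such obstructions amounts to proving rigidity of the cross-intersection pattern between supported columns and supported rows, which is essentially the content of Conjecture 1. My expectation is that the cleanest route is to establish Conjecture 1 first: under its hypothesis $|R_1 \cap R_2| \leq 1$, the obstruction becomes so constrained that either a covering column or row of $S$ must already exist (reducing to the first step), or the swap target can be chosen to avoid creating any empty intersection. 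Without that rigidity, a purely local deletion-swap argument appears to stall exactly at the compatibility check.
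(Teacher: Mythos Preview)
This statement is Conjecture 2 in the paper and is \emph{not} proved there; the paper explicitly says that it attempted to extend Proposition 7 via row and column deletions to the next case $|S|=2$, ran into additional casework, and did not see how to simplify the further steps. So there is no proof in the paper to compare against.

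Your proposal follows the same strategy the paper reports having tried, and you have correctly isolated the genuine obstruction. The first step---deleting mass from a supported column or row type $R$ that contains all of $S$---works exactly as you say and is a clean generalisation of Proposition 7. The difficulty really is the case where no single supported column or row covers $S$, and your proposed ``swap'' operation (moving mass from $R_1$ to $(R_1\setminus\{j\})\cup\{i\}$) is natural but, as you acknowledge, can violate constraint (5) whenever there exists a supported row $R_2$ with $R_1\cap R_2=\{j\}$. Your suggested remedy is to prove Conjecture 1 first, but note that Conjecture 1 says $|R_1\cap R_2|\le 1$ cannot happen with $|R_1\cap R_2|\ge 2$; it does not forbid $|R_1\cap R_2|=1$, which is precisely the obstructing configuration for your swap. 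So even assuming Conjecture 1, the compatibility check for your swap is not resolved---indeed, in all the paper's optimal constructions every supported column and row intersect in \emph{exactly} one colour, so singleton intersections are the generic situation rather than a pathology you can hope to exclude.

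In short, your proposal is not a proof but a faithful diagnosis of why the deletion technique stalls, and it aligns with the paper's own assessment. The gap is real and remains open: one needs either a different perturbation that respects constraint (5) automatically, or a structural result about optimal solutions strong enough to guarantee that a covering column or row of $S$ always exists.
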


We attempted to prove this conjecture but failed because in trying to prove the ``next step" of Proposition 7, namely that $a_1+b_1=a_2+b_2 > a_i+b_i$ for all $i \not \in \{1,2\}$ cannot hold, we run into more case work than in Proposition 7, and we believe further steps require even more casework which we currently don't know how to simplify. \\

We believe Theorems 7 and 8 provide an optimal solution as well. \\

\begin{conjecture}
Let $r \in \mathbb{Z}^{+}$. Suppose there exists $t \in \mathbb{Z}^{+}$ such that $t^2 \leq r \leq t(t+1)$. Then

$$g^*(r) =  \frac{2}{t} + \frac{1}{t+1} + \frac{r}{t(t+1)} - \frac{r}{t^2}.$$
\end{conjecture}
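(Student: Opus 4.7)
The upper bound $g^*(r) \leq k$, where $k := \frac{2}{t} + \frac{1}{t+1} + \frac{r}{t(t+1)} - \frac{r}{t^2}$, follows immediately from Theorem 7, so the task is to establish the matching lower bound $g^*(r) \geq k$. The plan is to extend the argument of Theorem 6, which handled $r = t^2 - 1$. It is convenient to rewrite $k = \frac{2}{t} - \frac{r - t^2}{t^2(t+1)}$, which shows $k \leq \frac{2}{t}$ with equality iff $r = t^2$. Writing $q = r - t^2$ and $p = t - q$, the target construction from Theorem 7 has column-set sizes $t$ with multiplicity $p$ and $t+1$ with multiplicity $q$, while every row-set has size $t$; the lower-bound argument must force essentially this support structure in any solution with $a_i + b_i < k$ for all $i$.

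The plan has three main steps. Step 1 is to show that $a(R) = b(R) = 0$ whenever $|R| \leq t - 1$, mirroring Claim 1 of Theorem 6. If $a(R) > 0$ for such $R$, then constraint (5) gives $\sum_{i \in R} b_i \geq 1$, and Lemma 3 (applicable since $k \leq 2/t \leq 2/|R|$) yields $\sum_{i \in R} a_i b_i \leq k - 1/|R|$. Combining with Lemma 1 and the trivial bound $a_i b_i \leq k^2/4$ on $[r] \setminus R$ gives the inequality $1 - k + 1/|R| \leq (r - |R|) k^2/4$, which must be verified to fail algebraically for $1 \leq |R| \leq t - 1$ and $t^2 \leq r \leq t(t+1)$. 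I expect this step to be routine and amenable to computer algebra, in the same spirit as the corresponding verifications in Theorem 6.

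Step 2 is to rule out the remaining ``mixed'' configurations of size $t$, following Claims 2 and 3 of Theorem 6: apply Lemmas 6 and 7 with $I = R_1 \cap R_2$, $O_1 = R_1 \setminus R_2$, $O_2 = R_2 \setminus R_1$ to control simultaneous $a(R_1), b(R_2) > 0$ at $|R_1| = |R_2| = t$, and deduce that, up to symmetry, $b(R) > 0$ forces $|R| \geq t$. Step 3 is then a pigeonhole: from $\sum_R |R| b(R) \geq t$ and $\sum_i (a_i + b_i) \leq rk$ one obtains $\sum_R |R| a(R) \leq rk - t$; decomposing this sum by $|R| = t$ versus $|R| \geq t+1$ and using the explicit value $rk = 2t + q(t+2)/(t(t+1)) - q^2/(t^2(t+1))$ forces a specific nonzero amount of $a$-mass onto sets of size exactly $t$. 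Picking one such set $R_0$, combining $a(R_0) > 0 \Rightarrow \sum_{i \in R_0} b_i \geq 1$ with the pointwise lower bound on $a_i$ for $i \in R_0$ inherited from the mass estimate then produces some $j \in R_0$ with $a_j + b_j \geq k$, contradicting the working assumption.

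The main obstacle is Step 2. In the $r = t^2 - 1$ setting, Lemmas 6 and 7 applied with room to spare because $k$ there strictly exceeded $2/t$; in the present range, $k \leq 2/t$, so the hypothesis $k < 2/(|I| + |O_1|) = 2/t$ needed by those lemmas is tight at $r = t^2$ and remains delicate throughout the interval. A successful proof will likely need strengthened versions of Lemmas 6 and 7 that exploit contributions from $|R| \geq t+1$ more tightly, rather than relying on the loose $a_i b_i \leq k^2/4$ bound on the complement, and the boundary values $r = t^2$ (already resolved by Theorem 5) and $r = t(t+1)$ may have to be handled separately. This delicacy is presumably the reason the authors state the result only as a conjecture.
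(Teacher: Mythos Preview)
This statement is Conjecture~3 in the paper; the authors do not prove it, so there is no paper proof to compare your outline against. Your proposal is an attempt at an open problem, and you rightly flag Step~2 as the obstacle---but the gap is structural, not merely a matter of the inequality $k<2/t$ being tight.

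Your Step~2 proposes to rule out simultaneous $a(R_1),b(R_2)>0$ with $|R_1|=|R_2|=t$ (the analogue of Claim~2 in Theorem~6) and to deduce that, up to symmetry, $b(R)>0$ forces $|R|\geq t$. The second conclusion is already Step~1, so nothing new is gained; and the first is simply \emph{false} for the extremal construction of Theorem~7. In that construction every row set has size exactly $t$, and each of the $p=t(t+1)-r$ left columns also has a size-$t$ colour set, so pairs $R_1,R_2$ of size $t$ with $a(R_1),b(R_2)>0$ occur abundantly (they meet in a single colour). Hence no analogue of Claim~2 can be proved. The analogue of Claim~3 fails for the same reason: there are $p$ pairwise disjoint size-$t$ sets with positive $a$-mass, not a unique one, so the Step~3 pigeonhole---which needs the size-$t$ $a$-mass concentrated on one set $R_0$ in order to lower-bound each $a_i$ for $i\in R_0$---does not go through.

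The point is that in Theorem~6 the minimum-size sets (size $t-1$) are exceptional, appearing exactly once on one side, whereas for $t^2<r<t(t+1)$ the minimum-size sets (size $t$) are generic on both sides. A proof of Conjecture~3 would have to control the interaction of many size-$t$ sets simultaneously, which is qualitatively beyond what Lemmas~6 and~7 (or routine strengthenings) deliver. This, rather than the numerical slack you identify, is why the paper leaves the statement as a conjecture.
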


\begin{conjecture}
Let $r \in \mathbb{Z}^{+}$. Suppose there exists $t \in \mathbb{Z}^{+}$ such that $t(t+1) \leq r \leq (t+1)^2$. Then

$$g^*(r) = \frac{2}{t+1}+\frac{1}{t}+\frac{r}{(t+1)^2}-\frac{r}{t(t+1)}.$$
\end{conjecture}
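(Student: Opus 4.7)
The upper bound $g^*(r) \leq k$, where $k := \frac{2}{t+1}+\frac{1}{t}+\frac{r}{(t+1)^2}-\frac{r}{t(t+1)}$, is already Theorem 8, so the plan is to prove the matching lower bound $g^*(r) \geq k$. I would model the argument on the proof of Theorem 6, the $r = t^2-1$ case, which is really the boundary instance $q = 1$ of the two-sided family covered by Conjectures 3 and 4. Assume toward contradiction an optimal solution $a,b : \mathcal{P}([r]) \to [0,1]$ with $a_i + b_i < k$ for every $i \in [r]$. A short algebraic manipulation gives $k = \frac{2}{t+1} + \frac{1}{t} - \frac{r}{t(t+1)^2}$, which will keep the bookkeeping below cleaner.

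The first step is to cut down the support of $a$ and $b$ to subsets of size at least $t$. For any $R$ with $a(R) > 0$ and $|R| = s$, constraint (5) forces $\sum_{i \in R} b_i \geq 1$, so Lemma 3 (provided $k \leq 2/s$) gives $\sum_{i \in R} a_i b_i \leq k - \tfrac{1}{s}$; combining this with Lemma 1 and the AM-GM bound $\sum_{i \notin R} a_i b_i \leq (r-s)\tfrac{k^2}{4}$, one gets the inequality $1 - k + \tfrac{1}{s} \leq (r-s)\tfrac{k^2}{4}$, which should be violated for all $s$ up to $t-1$ uniformly in $r \in [t(t+1),(t+1)^2]$. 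The next step is to forbid mixed small-overlap configurations: if $a(R_1), b(R_2) > 0$ with $|R_1| = |R_2| = t$ and intersection $I$, the pair of forced inequalities $\sum_{R_1} b_i \geq 1$ and $\sum_{R_2} a_i \geq 1$ feeds into Lemma 6 to bound $\sum_{i \in R_1 \cup R_2} a_i b_i$ from above; AM-GM on the remaining colors then yields a contradiction for every admissible $|I|$. A symmetric application of Lemma 7 rules out two distinct size-$t$ subsets on the same side.

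After these preliminaries, one may assume without loss of generality that $b(R) > 0$ implies $|R| \geq t+1$, while $a$ is allowed to place mass on size-$t$ subsets, and moreover that there is essentially a unique $R^\star$ of size $t$ with $a(R^\star) > 0$. The conclusion should follow from a double-counting identity: from $\sum_R |R|(a(R) + b(R)) = \sum_i (a_i + b_i) < rk$ and the fact that every $b$-contribution carries weight at least $t+1$, one derives an upper bound on $\sum_R |R| a(R)$; splitting $a(R)$ by $|R| = t$ versus $|R| \geq t+1$ produces a lower bound on $a(R^\star)$ that, combined with $\sum_{i \in R^\star} b_i \geq 1$, forces some $a_j + b_j \geq k$, contradicting the standing assumption. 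The hard part will be carrying the casework through \emph{uniformly} in the free parameter $q = (t+1)^2 - r$, rather than at the endpoints $q = 0, 1$ where it is known to work: the key inequalities in Lemmas 6 and 7 are degree-two polynomials in $k$ whose sign may genuinely flip inside the interval, so new structural input (plausibly Conjecture 2, forbidding $|R_1 \cap R_2| \geq 2$ when $a(R_1), b(R_2) > 0$) may be needed to close the gap in the middle of the range.
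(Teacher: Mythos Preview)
The statement you are attempting to prove is \emph{Conjecture 4} in the paper, not a theorem; the paper offers no proof of it. So there is nothing on the paper's side to compare against beyond the upper bound (Theorem 8), which you correctly cite.

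Your outline is a faithful extrapolation of the Theorem 6 argument to general $q = (t+1)^2 - r$, and you have accurately located the obstruction yourself: the three ``Claim''-style steps (Lemma 3 to kill $|R| \leq t-1$, Lemma 6 to kill a mixed pair of size-$t$ sets, Lemma 7 to kill two distinct size-$t$ sets on the same side) all reduce to explicit polynomial inequalities in $k$ and $|I|$ that, in the paper, are only checked at $q=1$ via Mathematica. For general $q$ the value $k = \tfrac{2}{t+1} + \tfrac{1}{t} - \tfrac{r}{t(t+1)^2}$ moves, and there is no reason the same inequalities hold across the whole interval; indeed the author leaves this open precisely because the Lemma 6/7 bounds are not known to be strong enough there. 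Your final double-count also tacitly needs the \emph{uniqueness} of the size-$t$ set $R^\star$ on the $a$-side, and for $q>1$ one would expect up to $q$ such sets in the optimal construction (the $q$ narrow columns), so the endgame would have to be reworked as well.

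In short: this is a reasonable plan of attack, and the paper's author would presumably agree with it, but as you note it is not a proof. The genuine gap is exactly where you put it---the middle of the range---and closing it likely requires either sharper area estimates than Lemmas 6 and 7 provide, or an independent structural result such as Conjecture 1 or 2.
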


\begin{center}
\section{Acknowledgements}
\end{center}

This research was conducted at the University of Minnesota Duluth REU and was supported by Jane Street Capital, NSF-DMS (Grant 1949884), Ray Sidney, and Eric Wepsic. We thank Joe Gallian, Colin Defant, Noah Kravitz, Maya Sankar, Mitchell Lee, and Ben Przybocki for feedback.

\bibliographystyle{plain}
\bibliography{refs}

\noindent
Department of Mathematics, Carnegie Mellon University, Pittsburgh, PA 15213, USA 
\end{document}